\newtheorem{theorem}{Theorem}
\newtheorem{proposition}{Proposition}
\newtheorem{assumption}{Assumption} 
\newtheorem{lemma}{Lemma}
\newtheorem{corollary}{Corollary}
\newtheorem{remark}{Remark}
\newtheorem{definition}{Definition}
\newcommand{\R}{{\mathbb R}} 
\newcommand{\rzeromas}{r_{0}^{+}} 
\newcommand{\rzeromenos}{r_{0}^{-}} 
\newcommand{\rmumas}{r_{\tildemuI}^{+}} 
\newcommand{\rmumenos}{r_{\tildemuI}^{-}}
\newcommand{\rwmas}{r_{w}^{+}} 
\newcommand{\rwmenos}{r_{w}^{-}} 
\newcommand{\rbar}{\bar r^{+}}
\newcommand{\xssup}{x_S^{\sup}}
\newcommand{\xtsup}{x_T^{\sup}}
\newcommand{\tildemuI}{\mu_I}
\newcommand{\xbar}{\bar x_C}
\newcommand{\xref}{x_{ref}}
\newcommand{\lambdancl}{\lambda_{\rm ncl}}
\newcommand{\xasin}{x_{\rm asymp}}
\def\control{u} 
\title{Optimal control of diseases in prison populations through screening policies of new inmates}
\author{{\sc P. Gajardo$^{1}$, V. Riquelme$^{1}$ and D. Vicencio$^{1}$}\\[2mm]
$^{1}$ Departamento de Matem\'atica, Universidad T\'ecnica Federico Santa Mar\'ia,\\Avenida Espa\~na 1680, Valpara\'iso, Chile\\[2mm]
{\tt pedro.gajardo@usm.cl, victor.riquelmef@usm.cl,
 diego.vicencio@alumnos.usm.cl}\\[2mm]
}
\begin{document}
\maketitle

\begin{abstract}
In this paper, we study an optimal control problem of a communicable disease in a prison population. In order to control the spread of the disease inside a prison, we consider an active case-finding strategy, consisting on screening a proportion of new inmates at the entry point, followed by a treatment depending on the results of this procedure. The control variable consists then in the coverage of the screening applied to new inmates.  The disease dynamics is modeled by a SIS (susceptible-infected-susceptible) model, typically used to represent diseases that do not confer immunity after infection. We determine the optimal strategy that minimizes a combination between the cost of the screening/treatment at the entrance and the cost of maintaining infected individuals inside the prison, in a given time horizon.  Using the Pontryagin Maximum Principle and Hamilton-Jacobi-Bellman equation, among other tools, we provide the complete synthesis of an optimal feedback control, consisting in a bang-bang strategy with at most two switching times and no singular arc trajectory, characterizing different profiles depending on model parameters.

\emph{Keywords:   SIS epidemiological model, Pontryagin Maximum Principle, Hamilton-Jacobi-Bellman equation, bang-bang solution, health in prisons}
\end{abstract}

\section{Introduction}


Several communicable diseases, such as sexually transmitted infections (STIs), remain a public health problem that is far from being controlled \cite{world2016}.  In prisons, there are far higher prevalences of some diseases  than in the general population, due to, among other reasons, crowded environments, high-risk behaviors such as non protected sexual relations  \cite{kouyoumdjian2012}, and because during imprisonment, the probability of appearance of disease risk factors (e.g., depression, drug use,  etc.) increases. Another factor is related to the deficiencies of the health systems in prisons, which imply the existence of barriers to access to care, which delays diagnoses and prolongs contagion times \cite{belenko2008, khan2009incarceration, MariaetAl2011,kouyoumdjian2012}. This is a general social problem and not only a penitentiary concern, because prisons are acting as reservoirs of diseases, which are transmitted later to the community when inmates are released, or when they are in contact with the outside population (e.g., visitors and prison workers).  Because of this, the World Health Organization (WHO) includes prisoners in the key populations that should be the focus of interventions designed to reduce the burden of  diseases \cite{who2007,world2016}.

Currently, for some diseases, there are proven technological resources that facilitate access to the diagnosis, and subsequent treatment, of communicable diseases in low-resource contexts, such as the existence of low complexity rapid tests, which can be applied by personnel with basic training  \cite{blandford2007,boelaert2007,castillo2020,gianino2007,lee2010,lee2011,pascoe2009,peeling2009}. For STIs, the diagnostic strategies based on rapid tests have the potential to increase access to definitive diagnosis for asymptomatic patients, preventing the development of long-term complications, and cutting the chain of disease transmission in the population \cite{peeling2006}. The above notwithstanding, little is known about the use of these technologies in prison contexts, where the most widespread practice for identifying cases is the passive case-detection, mainly based in symptoms, despite the existence of relatively less expensive alternatives that would allow switching to active case-finding strategies \cite{castillo2020}. 


Our objective in this paper is to provide a theoretical contribution in the explained context, determining  the optimal strategy of active case-finding at the entry point of a prison, considering as (scalar) control variable the coverage in the application of screening (for instance, with rapid tests) to new inmates before coming into contact with the prison population, assuming that all detected cases are satisfactorily treated.  For this purpose, we use one of the simplest epidemiological models, which is the SIS (susceptible-infected-susceptible) model, for representing a disease dynamics inside the prison. We work with this model, because our aim is to provide analytically the complete synthesis of an optimal feedback control, rather than numerically solving the optimization problem, as can be found in the literature, in the context of epidemics control, for  more complex models (see for instance \cite{ejoce2,ejoce3,ejoce1}).  An analytical solution has the advantage of providing useful information about properties of the optimal strategies, and on the impact of some parameters in these strategies, which can then be tested in more realistic models. SIS-type models represent diseases that do not confer immunity against reinfection (e.g., meningitis, gonorrhea, etc.) \cite{BrauerCCC2013}. In the prison context, we assume that the total population is fixed. This assumption can be explained by the overcrowding of many of them, so as soon as a vacancy is released, it is immediately filled by a new inmate.


The cost to be minimized is a combination between the cost of the screening/treatment at the entrance, and the cost of maintaining infected individuals inside the prison, in a given horizon time.  This leads to study an optimal control problem governed by a one-dimensional ordinary differential equation, which is affine in the control variable (screening coverage at the entry point) and quadratic in the state variable (proportion of infected inmates). Thus, the problem does not fit in the classical linear-quadratic formulation (e.g., \cite{campbell1976,Rockafellar1987,sontag1998}). Although it is a one dimensional problem, even affine in the control, the obtained formulation requires a careful analysis, because \emph{a priori} it can exhibit technicalities such as several switching points for optimal trajectories, as well as the appearance of turnpike and anti-turnpike singular arcs. 

Reformulating the problem, one can write the objective function to be minimized, as a purely state-dependent cost (we do not do that). In this framework, when the integrand of the objective function is convex with an isolated minimum, as in the seminal paper \cite{Clark1979} (in the context of natural resource management), the solution is the  most rapid approach path (MRAP)  to the state which minimizes the integrand. This kind of model has been studied later in \cite{RapaportCartigny2004,RapaportCartigny2005}, analyzing  calculus of variations problems with infinite time horizon, whose  integrands depend linearly on the state variable and on its velocity. Using  the Hamilton-Jacobi-Bellman equation,  in \cite{RapaportCartigny2005} the authors  characterize the optimal solutions as the MRAP to some solution of the corresponding Euler-Lagrange equation. When there is  no feasible control to remain in a turnpike singular arc (as needed in a MRAP solution), the problem has been studied recently by \cite{BayenetAl2015} and \cite{dmitruk2020}. All the mentioned papers show the interest for characterizing analytically the solutions of one-dimensional optimal control problems. Nevertheless, the problem formulation that we obtain, for the control of diseases in prison populations through screening policies of new inmates, to the best of our knowledge, does not fit in a framework already studied and, therefore,  it has not been solved analytically.  For this problem, we show the existence of an anti-turnpike singular arc, so there is not a singular arc optimal trajectory. Also, as a direct consequence of the Pontryagin Maximum Principle \cite{pontryagin}, we show that there are at most two switching instants. Knowing that optimal solutions are of bang-bang type, and knowing the number of maximal switching points, the synthesis of an optimal feedback control is obtained  dividing the  state-time plane with the switching curves, and proving that in each region, the  trajectories associated to (bang) constant controls, are solutions of the corresponding Hamilton-Jacobi-Bellman (HJB) equation \cite{BarCapBook}. To this purpose, it is necessary to compute the derivatives of value function with respect to the switching  points (time and state) through  a formula established by one of us in \cite{victor:ode}. Once the HJB equation is verified in each region, we conclude using the result given in \cite[Corollary 7.3.4]{BressanPiccoli2007}, recalled in this paper,  where is proven that the entire trajectory is an optimal solution of the original problem.

The paper is organized as follows. In Section \ref{sec:model} we describe the model and formulate the optimal control problem, to continue in Section \ref{sec:assum-prelim} presenting some assumptions and preliminary results. In Section \ref{sec:pontryagin}, using the  Pontryagin Maximum Principle and studying feasible curves, we deduce a series of properties for optimal trajectories. The complete synthesis of an optimal control, in feedback form, is obtained in Section \ref{sec:sintesis}. Finally, some concluding remarks are stated in Section \ref{sec:conclusions}.


\section{Communicable diseases with direct transmission and constant population size (SIS model)}\label{sec:model}

Let us consider the modeling framework proposed in \cite{BrauerCCC2013} with a constant human population size $N >0$. Define $S(t) \geq 0$ and $I(t) \geq 0$ as the quantities of individuals that belong to the susceptible and infective classes, so that $S(t) + I(t)=N$ for all $t \geq 0$. Suppose also that people enter the prison at a constant rate $\mu N$, a proportion $p_I \in [0,1]$ of which are infected and $1-p_I$ are susceptible to the infection. People also exits the prison at the same entry rate. We suppose that the spread of the disease inside the prison can be controlled by the application of a screening procedure (test) to a fraction $\control \in [0,1]$ of the population that enters the prison. This test has a sensitivity (true positive rate) of $1-\eta$, and a specificity (true negative rate) of $1-\delta$, with $\eta, ~\delta \in [0,1]$. After the test, an immediate treatment to the people whose tests results are positive is performed. The probability of successful recovery of an infected individual is $\pi\in[0,1]$. A scheme of this process is shown in Figure \ref{fig:esquema_entrada}.

\begin{center}
\includegraphics[scale=0.04]{./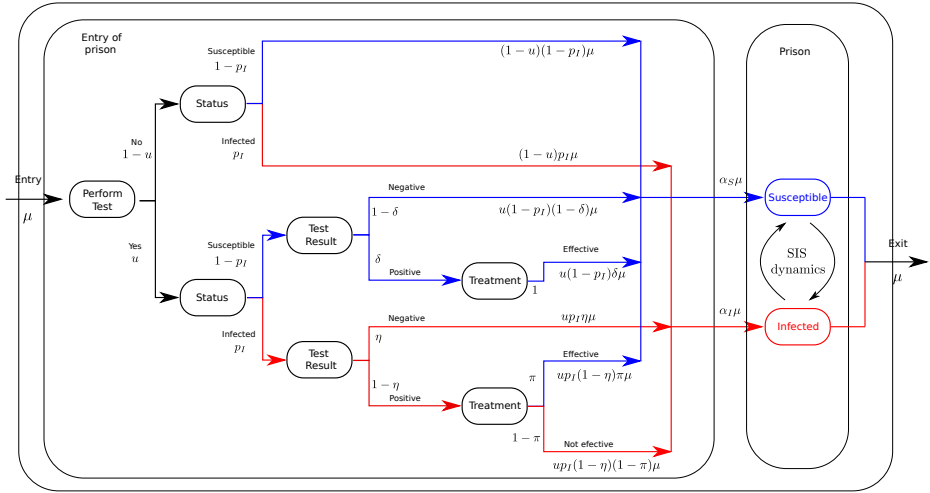}
\captionof{figure}{Scheme of test and treatment at the entry of the prison.}\label{fig:esquema_entrada}
\end{center}

The proportion $\alpha_I \in [0,1]$ of infected people effectively entering the prison after the screening-treatment procedure is composed by
\begin{itemize}\itemsep0em
\item Infected people, tested, detected by test, for which the treatment does not work;
\item Infected people, tested, not detected by the test;
\item Infected people, not tested.
\end{itemize}
Then, 
\begin{equation*}
\alpha_I \,=\, up_I(1-\eta)(1-\pi) + up_I\eta + (1-u)p_I \,=\, p_I(1-u(1-\eta)\pi) .
\end{equation*}

In order to work with the proportions of susceptible and infected populations inside the prison we consider $s(t):=S(t)/N\in[0,1]$ and $x(t)=I(t)/N\in[0,1]$. If the disease transmission occurs at the per-capita contact rate $ \beta > 0$ and infected individuals recover from the disease at a rate of $\gamma >0$ with no immunity, then the disease transmission dynamics can be described by the following SIS system of two differential equations:
\begin{equation}\label{eq:sys}
\left\{\,\begin{split}
\dot{s} & = (1-\alpha_I)\mu  -  \beta s \: x + \gamma x - \mu s, \\
\dot{x} & = \alpha_I\mu+ \beta s \: x -  \gamma x - \mu x.
\end{split}\right.
\end{equation}

\noindent Notice that, under dynamics \eqref{eq:sys}, the set $\{(x,s)\in\R^2_+\,|\, x+s = 1\}$ is positively invariant. Since $x(0)+s(0)=1$, we have the equality $s(t)=1-x(t)$ for all $t\geq0$, thus the above system can be reduced to
\begin{equation*}
\dot x 
=\, \mu_I-\tilde\mu_Iu + \beta x(1-x)- (\gamma + \mu)x ,
\end{equation*}
where $\mu_I:=p_I \mu$ is the rate of infected people entering from the outside, and $\tilde\mu_I := (1-\eta)\pi\mu_I\leq\mu_I$ is the rate of infected people entering the prison that was correctly identified by the test and successfully treated. The dynamics of $x$ can be written in a simpler form, as 
\begin{equation}\label{eq:SIS-logistic-2}
\dot x = \mu_I- w + f(x),
\end{equation}
where $f(x):=(\beta-(\gamma+\mu))x-\beta x^2=Bx-Ax^2$ is a concave quadratic function, with $A:=\beta$, $B:=\beta-(\gamma+\mu)$, and the new control $w=\tilde\mu_I u\in[0,\tilde\mu_I]$. We keep this notation from now on.

We are interested in the minimization of  the cost associated to the screening procedure at the entry point, the cost associated to the treatment of detected individuals and the cost of maintaining infected people  in the prison (associated also to detection and treatment) during a fixed time period $T>0$. 

The rate of persons tested at time $t$ (at the entry point) is $\mu u(t)$.  The rate $r(t)$ of treated people at time $t$, due to a positive test at the entry point is composed by
\begin{itemize}\itemsep0em
\item Infected people, tested, detected by test as infected;
\item Susceptible people, tested, detected by test as infected.
\end{itemize}
Then, 
\begin{equation*}
r(t) \,=\, u(t)\mu(1-\eta)p_I + u(t)\mu \delta(1-p_I) \,=\, u(t)\mu \left[(1-\eta)p_I + \delta(1-p_I)\right].
\end{equation*}

Let $C_D,~C_T,~C_I \in (0,+\infty)$ be the unitary costs of test (at the entrance), treatment (produced by a detection in the entrance) and of infected individuals in the prison, respectively. Then, the cost functional is given by
\begin{equation*}
\begin{split}
J(u(\cdot);x_0,t_0) =&\, \int_{t_0}^T \left[C_D u(t)\mu + C_T r(t) + C_I x(t)\right]dt\\
= &\, C_I\int_{t_0}^T  \left[\left(\frac{C_D}{C_I}\mu+ \frac{C_T}{C_I}\mu \left[(1-\eta)p_I + \delta(1-p_I)\right]\right) u(t)  + x(t)\right]dt,
\end{split}
\end{equation*}
with $x(t)$ the solution of \eqref{eq:SIS-logistic-2} with initial condition $x(t_0)=x_0$ at initial time $t_0 \in [0,T)$, and control $u(\cdot)$. In the previous expression we can omit the multiplicative constant $C_I$, and rename $C:=\left(\frac{\mu}{\tilde \mu_I}\right)\left(\frac{C_D}{C_I}\mu+ \frac{C_T}{C_I}\mu \left[(1-\eta)p_I + \delta(1-p_I)\right]\right)$. Thus, we consider a simpler expression for the cost functional, given in terms of the control $w(\cdot)$, by:
\begin{equation*}\label{eq:funcional_costo}
J(w(\cdot);x_0,t_0) = \int_{t_0}^T \left[C w(t) + x^{w(\cdot)}(t;x_0,t_0)\right]dt
\end{equation*}
where $x^{w(\cdot)}(\cdot;x_0,t_0)$ satisfies
\begin{equation}\label{eq:SIS-logistic-3}
\left\{\quad\begin{split}
\dot x(t) =&\,  \mu_I- w(t) + f(x(t)) \quad \mbox{ a.e. }t\in[t_0,T],\\
x(t_0) =&\, x_0,\\
w(t)\in&\,[0,\tilde\mu_I]\quad \mbox{ a.e. } t\in[t_0,T].
\end{split}\right.
\end{equation}

The optimization problem is, for a given initial condition $x_0\in[0,1]$ at initial time $t_0\in[0,T)$,
\begin{equation}\label{eq:problem}
V(x_0,t_0) := \min \left\{ J(w(\cdot);x_0,t_0)\,|\, (x(\cdot),w(\cdot)) \mbox{ satisfies }\eqref{eq:SIS-logistic-3},\, w(\cdot) \mbox{ measurable} \right\}.
\end{equation}

\section{Assumptions and preliminaries}\label{sec:assum-prelim}

In this section we study some preliminary concepts regarding the model. It is immediate to notice that for any given measurable function $w:[t_0,T]\rightarrow [0,\tilde\mu_I]$, the dynamics in \eqref{eq:SIS-logistic-2} is locally Lipschitz (c.f. \cite{peypouquet2015convex}) for $x\in[0,1]$, and thus, by application of Caratheodory's theorem \cite{browder1998ordinary,coddington1955theory}, existence and uniqueness of solutions of \eqref{eq:SIS-logistic-2} can be guaranteed for any initial condition $x(t_0) \in [0,1]$. Since we are dealing with the proportion of the infective population, we have the following lemma:
\begin{lemma}
The interval $[0,1]$ is invariant under dynamics \eqref{eq:SIS-logistic-2} for any control function $w:[t_0,T]\rightarrow [0,\tilde\mu_I]$.
\end{lemma}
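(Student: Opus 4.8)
The plan is to show that the boundary points $x=0$ and $x=1$ act as barriers for the flow of \eqref{eq:SIS-logistic-2}, so that a trajectory starting in $[0,1]$ can never cross them; combined with the already-established existence and uniqueness of solutions, this gives invariance. Concretely, I would evaluate the vector field $g(x,w) := \mu_I - w + f(x)$ at the two endpoints and check the sign of $\dot x$ there, for every admissible control value $w \in [0,\tilde\mu_I]$.

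First, at $x=0$: since $f(0) = 0$, we have $\dot x = \mu_I - w$. Because $w \le \tilde\mu_I \le \mu_I$ (the inequality $\tilde\mu_I = (1-\eta)\pi\mu_I \le \mu_I$ was noted when the model was reduced), we get $\dot x = \mu_I - w \ge 0$ at $x=0$. Hence the field points into $[0,1]$ (or is tangent) along the lower boundary, so a solution cannot exit through $0$. Second, at $x=1$: here $f(1) = B - A = \beta - (\gamma+\mu) - \beta = -(\gamma+\mu) < 0$, so $\dot x = \mu_I - w - (\gamma+\mu)$. Since $w \ge 0$ and $\mu_I = p_I\mu \le \mu < \gamma+\mu$, we obtain $\dot x \le \mu_I - (\gamma+\mu) < 0$ at $x=1$, so the field points strictly inward along the upper boundary as well.

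To turn these sign conditions into a rigorous invariance statement I would argue by contradiction in the standard way: suppose $x(t_0) \in [0,1]$ but the solution leaves $[0,1]$; by continuity there is a first exit time, say through the upper boundary, i.e. a time $t_1$ with $x(t_1) = 1$ and $x(t) > 1$ for $t$ in a right-neighbourhood of $t_1$ — but this forces $\dot x(t_1) \ge 0$ (as a one-sided derivative / limit of difference quotients), contradicting $\dot x(t_1) < 0$ computed above; symmetrically for the lower boundary, where one uses that $\dot x(t_1) \ge 0$ at $x=0$ means the solution stays at or above $0$. Care is needed because $w(\cdot)$ is only measurable, so $\dot x$ need not be continuous; the clean way to handle this is to note that $t \mapsto x(t)$ is absolutely continuous and to integrate: if $x(t_1) = 1$ and $x(t) \ge 1$ on $[t_1, t_1+\varepsilon]$ then $0 \le x(t_1+\varepsilon) - x(t_1) = \int_{t_1}^{t_1+\varepsilon}(\mu_I - w(t) + f(x(t)))\,dt$, but on that interval, shrinking $\varepsilon$ so that $x$ stays close to $1$ and using continuity of $f$, the integrand is bounded above by a negative constant, a contradiction. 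The lower boundary is handled identically.

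The main obstacle is the low regularity of the control: one cannot simply invoke Nagumo's theorem or a pointwise differential-inequality argument without justifying it for Carathéodory solutions, so the absolutely-continuous/integral formulation above is the technically delicate step. Everything else — the two endpoint sign computations and the first-exit-time contradiction — is routine once the admissibility bounds $0 \le w \le \tilde\mu_I \le \mu_I < \gamma+\mu$ are in hand.
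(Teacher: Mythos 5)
Your proof is correct and follows essentially the same route as the paper: evaluate the vector field at $x=0$ and $x=1$, obtaining $\dot x|_{x=0}=\mu_I-w\ge 0$ (since $w\le\tilde\mu_I\le\mu_I$) and $\dot x|_{x=1}=\mu_I-w-(\gamma+\mu)<0$, and conclude that the field points inward at the endpoints. The additional absolutely-continuous/first-exit-time argument you supply merely makes rigorous the step the paper leaves implicit, so the approaches coincide.
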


\begin{proof}
It suffices to show that the dynamics points inwards the interval $[0,1]$ at the endpoints of this interval. Notice that $\dot x|_{x=0} = \mu_I-w+f(0)=\mu_I-w\geq0$, and $\dot x|_{x=1} = \mu_I-w+f(1)=\mu_I-w-A+B=-\mu(1-\alpha_I)-w-\gamma<0$ for any $w\in[0,\tildemuI]$, which proves the lemma.
\end{proof}

The steady states of the dynamics \eqref{eq:SIS-logistic-2}, given a constant control $w(\cdot) \equiv w\in[0,\tilde\mu_I]$, are
\begin{equation}\label{eq:raices_w}
r^{+}_w := \frac{B + \sqrt{B^2+4A(\mu_I-w)}}{2A}   \quad,\quad   r^{-}_w := \frac{B - \sqrt{B^2+4A(\mu_I-w)}}{2A}.
\end{equation}

We notice that the differential equation \eqref{eq:SIS-logistic-2} corresponds to a logistic equation translated by a positive constant. In the case when $\tilde\mu_I<\mu_I$, since $w \le \tilde \mu_I$, we have $r^{-}_w<0$ and $0<r^{+}_w<1$. It is easy to check that for any such values, the steady state $r^{+}_w$ is globally asymptotically stable for any initial condition $x(t_0)=x_0\in [0,1]$, using the constant control $w(\cdot)\equiv w$. In this particular case, there is no disease-free steady state, and thus the concept of the basic reproduction number does not apply.

In the case when it is possible to have $w=\tilde\mu_I=\mu_I$, that is, considering a perfect sensitivity test ($\eta=0$), with perfect recovery after treatment ($\pi=1$), and performing a full screening ($w=\tilde\mu_I=\mu_I$), the system does not receive infected people. We have 
\begin{equation}\label{eq:rmui}
\rmumenos = 0,\quad \rmumas = \frac{B}{A} = 1-\frac{\gamma+\mu}{\beta}.
\end{equation}
Thus, with $w(\cdot)\equiv \mu_I$, the model \eqref{eq:SIS-logistic-2} corresponds to a standard logistic equation, and it is possible to define $\mathcal{R}_0:=\beta/(\gamma+\mu)$ the {\it basic reproduction number} of the system \cite{BrauerCCC2013}. From now on, we make the following assumptions:

\begin{assumption}
The used test has perfect sensitivity  ($\eta=0$) and the treatment for detected infected people is fully effective ($\pi=1$). In other words, $\tilde\mu_I = \mu_I$. 
\end{assumption}

\begin{assumption}
The basic reproduction number satisfies $\mathcal{R}_0 = \beta/(\gamma+\mu)>1$. 
\end{assumption}

The hypothesis $\mathcal{R}_0>1$ is equivalent to $B=\beta-(\gamma+\mu)>0$. This implies that the so called {\it endemic steady state} is positive (exists) and is  globally asymptotically stable (applying $w\equiv \mu_I$) for all initial conditions $x(t_0)=x_0\in(0,1]$. In turn, the so called {\it disease-free steady state} $r^{-}_{\mu_I}=0$ is unstable for all initial conditions $x(t_0)\in(0,1]$. Hence, under the assumption  $\mathcal{R}_0>1$, even applying the maximal control $w\equiv \mu_I$, the disease does not vanish from the prison population. We think this is the most interesting case to be studied. When $\mathcal{R}_0< 1$, applying the maximal control, for a sufficiently long time, will imply that the number of infected people in the prison converges to zero.

We also observe that the function $w\mapsto \rwmas$ is decreasing (a greater amplitude of screening leads to a lower proportion of infected people at equilibrium). In particular, $r_{0}^{+}>r_{\mu_I}^{+}$. In what follows, we write 
\begin{equation}\label{eq:rzero}
\rzeromenos = \frac{B-\sqrt{\Delta}}{2A},\quad \rzeromas = \frac{B+\sqrt{\Delta}}{2A}, \quad \mbox{ with }\quad \Delta:= B^2+4A\mu_I.
\end{equation} 

Moreover, if $w(t) \equiv w\in[0,\mu_I]$ is a constant control, \eqref{eq:SIS-logistic-3} can be solved by the partial fractions method, and its solution $x^w(t;x_0,t_0)$ is given by
\begin{equation}\label{eq:solucion_logistica}
x^w(t;x_0,t_0) = \frac{r_w^{+}(x_0-r_w^{-}) - e^{-A(r_w^{+}-r_w^{-})(t-t_0)}r_w^{-}(x_0-r_w^{+}) }{ (x_0-r_w^{-}) - e^{-A(r_w^{+}-r_w^{-})(t-t_0)}(x_0-r_w^{+})  }, 
\end{equation}
or, in terms of the time, 
\begin{equation}\label{eq:tiempo_logistica}
t-t_0 = \frac{1}{A(r_w^{+}-r_w^{-})}\log\left| \frac{(x^w(t;x_0,t_0)-r_w^{-})(x_0-r_w^{+}) }{(x^w(t;x_0,t_0)-r_w^{+})(x_0-r_w^{-})} \right|.
\end{equation}

In Section \ref{sec:switching} we take advantage of these expressions.

\section{Study of the optimal control problem}\label{sec:pontryagin}

\subsection{Pontryagin extremals and switches}\label{subsec:pontryagin}

We can guarantee the existence of solutions of the problem \eqref{eq:problem}, via Filippov's theorem \cite{clarke}.  Let us define the Hamiltonian of problem \eqref{eq:problem} by
\begin{equation}\label{eq:hamiltonian}
H(x,\lambda,\lambda^0;w) \, := \, \lambda^0(Cw+x)+\lambda(\mu_I-w + f(x)) \, = \, (\lambda^0 C-\lambda)w + \lambda^0 x +\lambda(\mu_I+f(x)).
\end{equation}
If $w(\cdot)$ is a solution of problem \eqref{eq:problem}, with associated optimal trajectory $x(\cdot)$, then there exists a non null Pontryagin adjoint state $(\lambda^0,\lambda(\cdot))$, with $\lambda^0\in\{0,1\}$, and $\lambda(\cdot)$ absolutely continuous, which satisfies the equation \cite{clarke}
\begin{equation}\label{eq:eq_adjunta}
\dot\lambda(t) = -\lambda^0-\lambda f'(x(t)) \quad \mbox{ a.e. }\, t_0<t<T,\quad \lambda(T)=0,
\end{equation} 
such that
\begin{equation}\label{eq:minham}
H(x(t),\lambda(t),\lambda^0;w(t))=\min_{w\in[0,\tildemuI]} H(x(t),\lambda(t),\lambda^0;w),\quad  \mbox{ a.e. } t\in[t_0,T].
\end{equation} 

In what follows, by Pontryagin extremal (or extremal trajectory) we refer to a tuple $(x(\cdot),\lambda(\cdot),\lambda^0)$ satisfying \eqref{eq:SIS-logistic-3}, \eqref{eq:eq_adjunta}, and \eqref{eq:minham}, for some control $w(\cdot)$, with $\lambda^0\in\{0,1\}$ and $(\lambda^0,\lambda(\cdot))$ not null.

\begin{proposition}\label{prop:miscelaneos_Pontryagin}
Let $(x(\cdot),\lambda(\cdot),\lambda^0)$ be a Pontryagin extremal, with associated control $w(\cdot)$. Thus, 
\begin{enumerate}
\item The existence of abnormal trajectories is discarded. Thus, $\lambda^0=1$. \label{prop:no_abn}

\item The adjoint state satisfies $\lambda(t)>0$ for every $t \in [t_0,T)$.\label{prop:lambda_pos}

\item There exists $t^{\star} \in [t_0,T)$ such that for all $t\in(t^{\star},T]$, $w(t)=0$.

\item There are no singular arcs. \label{prop:no_singular_arc}
\end{enumerate}
\end{proposition}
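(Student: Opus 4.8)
The plan is to exploit that the Hamiltonian \eqref{eq:hamiltonian} is affine in $w$, with coefficient the \emph{switching function} $\sigma(t):=\lambda^0 C-\lambda(t)$: the minimum condition \eqref{eq:minham} then forces $w(t)=0$ wherever $\sigma(t)>0$, $w(t)=\tildemuI$ wherever $\sigma(t)<0$, and determines $w$ by no other means only on $\{\sigma=0\}$, which is exactly where a singular arc could occur. All four items are read off from this, combined with the adjoint equation \eqref{eq:eq_adjunta}.

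For item \ref{prop:no_abn}: if $\lambda^0=0$, then \eqref{eq:eq_adjunta} becomes the linear homogeneous equation $\dot\lambda=-\lambda f'(x(t))$ with $\lambda(T)=0$, whose unique solution is $\lambda\equiv 0$; this makes the pair $(\lambda^0,\lambda(\cdot))$ trivial, contradicting the Pontryagin Maximum Principle, so $\lambda^0=1$. For item \ref{prop:lambda_pos}, with $\lambda^0=1$ the equation $\dot\lambda+f'(x(t))\lambda=-1$, $\lambda(T)=0$, integrates explicitly (integrating factor $E(t):=\exp\!\big(\int_{t_0}^t f'(x(\tau))\,d\tau\big)$) to
\[
\lambda(t)=\int_t^T\exp\!\Big(\int_t^s f'(x(\tau))\,d\tau\Big)\,ds ,
\]
which is strictly positive for $t\in[t_0,T)$ since the integrand is positive and the interval non-degenerate. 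The third assertion is then immediate: $\lambda(T)=0<C$ and $\lambda$ is continuous, hence $\sigma=C-\lambda>0$ on some interval $(t^\star,T]$ with $t^\star\in[t_0,T)$, and on that interval $w\equiv 0$ by the first paragraph.

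The delicate item is \ref{prop:no_singular_arc}. Suppose an extremal has a singular arc on $[a,b]$ with $a<b$, i.e. $\sigma\equiv 0$, equivalently $\lambda\equiv C$, there. Differentiating, $\dot\sigma=-\dot\lambda=1+\lambda f'(x)\equiv 0$ forces $f'(x(t))\equiv-1/C$; since $f'(x)=B-2Ax$ is strictly decreasing, $x\equiv x_s:=(B+1/C)/(2A)$ is constant on $[a,b]$, and then $\dot x\equiv 0$ pins the singular control to $w\equiv w_s:=\mu_I+f(x_s)$. I would next compute $\ddot\sigma=\dot\lambda f'(x)+\lambda f''(x)\big(\mu_I+f(x)-w\big)$ and read off, along the arc, $\partial\ddot\sigma/\partial w=-\lambda f''(x)=2AC>0$ (using $f''\equiv-2A$ and $\lambda\equiv C$). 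This is precisely the sign tested by the generalized Legendre--Clebsch (Kelley) necessary condition: an order-one singular arc can be optimal for this minimization problem only if $-\,\partial\ddot\sigma/\partial w\ge 0$, which fails here. Equivalently, the candidate arc is an \emph{anti-turnpike} --- repelling rather than attracting for the extremal field --- so it cannot lie on an optimal trajectory, and therefore no singular arc occurs.

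The main obstacle is exactly this last step: the first-order conditions are entirely consistent with the singular arc (all time-derivatives of $\sigma$ vanish identically along it), so genuine second-order information is needed. The cleanest route is the pointwise Kelley computation above, which moreover does not depend on whether $w_s$ is feasible or $x_s\in[0,1]$ (if either fails the arc is trivially excluded). A self-contained alternative is a direct perturbation argument --- switching $w$ slightly off $w_s$ on a short subinterval strictly decreases the cost --- but that requires some care with the regularity of $w$ near the endpoints of the arc and with the feasibility $w_s\in(0,\tildemuI)$.
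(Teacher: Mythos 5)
Your proposal is correct and follows essentially the same route as the paper: items 1 and 3 are argued identically, and item 4 is exactly the paper's Legendre--Clebsch (Kelley) computation, since $(-1)\,\partial_w\big[\tfrac{d^2}{dt^2}\partial_w H\big]=\lambda f''(x)=-2AC<0$ along a candidate singular arc. The only cosmetic difference is in item 2, where you replace the paper's qualitative argument (at any zero of $\lambda$ before $T$ one has $\dot\lambda=-1<0$, so $\lambda$ could never return to $0$ at $T$) by the explicit integral representation $\lambda(t)=\int_t^T\exp\big(\int_t^s f'(x(\tau))\,d\tau\big)\,ds$; both are valid.
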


\begin{proof}
\begin{enumerate}
\item If $\lambda^0=0$, the only possible absolutely continuous solution of \eqref{eq:eq_adjunta} is $\lambda(\cdot)\equiv0$, which cannot happen, since $(\lambda(\cdot),\lambda^0)$ cannot be null. Thus, $\lambda^0\neq0$, from which we conclude that $\lambda^0=1$.

\item \if{The locus in the state space $(x,\lambda)$ such that $\dot\lambda=0$, is the set $\{(x,\lambdancl(x))\,|\, x\in\R \}$, where
\begin{equation}\label{eq:lambda_nulclina}
\lambdancl(x):=-1/f'(x)=-1/(B-2Ax).
\end{equation} 
This function has a vertical asymptote at the unique root of $f'(\cdot)$, which is $\xasin:=B/(2A)$, and it satisfies $\lambdancl(x)>0$ ($f'(x)<0$) for $x>\xasin$, and $\lambdancl(x)<0$ ($f'(x)>0$) for $x<\xasin$. Then,
\begin{itemize}\itemsep0em
\item For $(x,\lambda)$ such that $x>\xasin$ and $\lambda<\lambdancl(x)$, we have $\dot\lambda=-1-\lambda f'(x)<0$. 
\item For $(x,\lambda)$ such that $x<\xasin$ and $\lambda>\lambdancl(x)$, we also have $\dot\lambda=-1-\lambda f'(x)<0$. 
\end{itemize}}\fi

If there exists a time $t^{\dagger}<T$ such that $\lambda(t^{\dagger})=0$, then $\dot\lambda(t^{\dagger})<0$. This implies that $\lambda(t)<0$ for all $t>t^{\dagger}$, contradicting the transversality condition $\lambda(T)=0$. Thus, necessarily $\lambda(t)>0$ for all $t<T$.

\item Since the Hamiltonian is linear with respect to $w$, and $\lambda^0=1$ (by point \ref{prop:no_abn} of this proposition), the minimization of $H(\cdot)$ with respect to the control variable is characterized by the switching function
\begin{equation}\label{eq:sfpsi}
\psi(x,\lambda) := C-\lambda.
\end{equation}
Thus, the control that achieves the minimization of the Hamiltonian along the extremal trajectory can be written as a function of the state variables as $w(t)=\tilde w(x(t),\lambda(t))$, with
\begin{equation}\label{eq:control_min_ham}
\tilde w(x,\lambda)=\left\{\quad\begin{split}
0, \quad \mbox{ if } \psi(x,\lambda)>0 ,\quad \mbox{ equivalently, if } \lambda<C,\\
\tildemuI, \quad \mbox{ if } \psi(x,\lambda)<0 ,\quad \mbox{ equivalently, if } \lambda>C.
\end{split}\right.
\end{equation}

Since the final condition of the adjoint state is $\lambda(T)=0$, with $\lambda(\cdot)$ continuous, there exists $t^{\star}<T$ such that for all $t\in(t^{\star},T]$ we have $\lambda(t)<C$, which is equivalent to $\psi(x(t),\lambda(t))=C-\lambda(t)>0$. Thus, according to \eqref{eq:control_min_ham}, for all $t\in(t^{\star},T]$, $w(t)=0$.

\item The Hamiltonian \eqref{eq:hamiltonian} is linear in the control variable $w$. Suppose that the extremal trajectory and control are in a singular arc. Since $\lambda(t) > 0$ for $t\in[t_0,T)$ (point \ref{prop:lambda_pos}. of this proposition) and $f(\cdot)$ is strictly concave ($f''(x)<0$, for all $x\in\R$), we have
\begin{equation*}
(-1)\frac{\partial}{\partial w}\left[\frac{d^{2}}{dt^{2}} \frac{\partial H}{\partial w}\right] = \lambda f''(x)  < 0,
\end{equation*}
which contradicts the Legendre-Clebsch criteria \cite{kelley,Goh}, that states that along a singular arc,
\begin{equation*}
(-1) \frac{\partial}{\partial w}\left[\frac{d^{2}}{dt^{2}} \frac{\partial H}{\partial w}\right]  \geq 0.
\end{equation*}
Therefore, there cannot exist singular arcs along any extremal trajectory.
\end{enumerate}
\end{proof}

From now on, since $\lambda^0=1$, we omit its dependency in $H(\cdot)$ defined by \eqref{eq:hamiltonian}.\\

We can generate a field of extremals via backward integration. Indeed, for every prescribed final state $x_T\in[0,1]$, we can find the extremal trajectory $(x(\cdot),\lambda(\cdot))$, with associated control $w(\cdot)$ (given in feedback form by \eqref{eq:control_min_ham}, in the state space $(x,\lambda)$), such that $(x(T),\lambda(T))=(x_T,0)$. Now, since the system is time-autonomous, Pontryagin's principle states the constancy of the Hamiltonian along the extremal trajectories. Thus, if $(x(\cdot),\lambda(\cdot))$ is a Pontryagin extremal with associated control $w(\cdot)$, that passes through a point $(\xref,\lambda_{ref})$, then $H(x(t),\lambda(t);w(t)) = H(\xref,\lambda_{ref};\tilde w(\xref,\lambda_{ref}))$ for all $t\in[t_0,T]$ (with $\tilde w(\cdot)$ as in \eqref{eq:control_min_ham}). Then, an extremal trajectory with final condition $x(T)=x_T$ satisfies $H(x(t),\lambda(t);w(t)) = H(x_T,0;\tilde w(x_T,0)) = x_T$, for all $~t \in [t_0,T]$.

Using the constancy of the Hamiltonian, the following proposition shows that every optimal solution has at most two switches.

\begin{proposition}\label{prop:switches}
Let $(x(\cdot),\lambda(\cdot))$ be a Pontryagin extremal. Then, there exist at most two switching states for $x(\cdot)$. Moreover, there exist at most two switches along the trajectory. 
\end{proposition}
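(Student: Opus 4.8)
The plan is to analyze the switching function $\psi(x,\lambda)=C-\lambda$ along an extremal and count its sign changes. Since $\lambda(T)=0$, we know $\psi(x(T),\lambda(T))=C>0$, and by Proposition~\ref{prop:miscelaneos_Pontryagin} there is a final arc with $w\equiv 0$. A switch occurs exactly when $\lambda$ crosses the level $C$, so the number of switches equals the number of times the absolutely continuous function $t\mapsto\lambda(t)$ crosses $C$. I would try to show that $\lambda(\cdot)$ is \emph{unimodal} on $[t_0,T]$ — that is, it has at most one interior local maximum and no interior local minimum — which forces at most two crossings of any horizontal level, in particular the level $C$.

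To establish unimodality, first I would compute $\dot\lambda$ at a putative critical time $t^\dagger$ where $\lambda(t^\dagger)=C$ (an actual switching time). At such a time $\dot\lambda(t^\dagger)=-1-C f'(x(t^\dagger))=-1-C(B-2Ax(t^\dagger))$, which vanishes only when $x(t^\dagger)=x_{\rm ncl}:=\frac{1}{2A}\bigl(B+\frac1C\bigr)$, a single prescribed state. The key observation is that the sign of $\dot\lambda$ at a switch is governed solely by the position of $x(t^\dagger)$ relative to this threshold $x_{\rm ncl}$: if $x(t^\dagger)>x_{\rm ncl}$ then $\dot\lambda(t^\dagger)<0$, and if $x(t^\dagger)<x_{\rm ncl}$ then $\dot\lambda(t^\dagger)>0$. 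So a switch is ``increasing'' ($w$ goes $0\to\tilde\mu_I$) or ``decreasing'' ($w$ goes $\tilde\mu_I\to 0$) according to whether $x$ lies below or above $x_{\rm ncl}$ at the switching instant. To close the argument I would show that between consecutive switches the state $x(\cdot)$ is monotone (it follows the logistic dynamics \eqref{eq:SIS-logistic-3} toward the stable equilibrium $r^+_w$ of the constant control in force) and that $x_{\rm ncl}$ cannot be crossed by an extremal trajectory in a way that permits two increasing switches or two decreasing switches in a row; the positions of the relevant equilibria $r^\pm_0$ and $r^\pm_{\mu_I}$, together with $\lambda(T)=0<C$ fixing the last arc as $w\equiv 0$, pin down the admissible order of switches, leaving only patterns like $0$, then $0\!\to\!\tilde\mu_I$, then $\tilde\mu_I\!\to\!0$ — at most two switches.

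An alternative, perhaps cleaner route uses the constancy of the Hamiltonian: along any extremal, $H(x(t),\lambda(t);w(t))\equiv x_T$, and at a switch $\psi=0$, i.e. $\lambda=C$, so substituting into \eqref{eq:hamiltonian} the switching states are exactly the solutions $x$ of $x+C(\mu_I+f(x))=x_T$, a quadratic equation in $x$ (since $f$ is quadratic). A quadratic has at most two real roots, giving \emph{at most two switching states}. For the second assertion — at most two switches in \emph{time} — one must rule out the trajectory leaving and later re-entering the same switching state; this is where the monotonicity of $x(\cdot)$ on each bang arc, plus the sign analysis of $\dot\lambda$ above showing that once $\psi$ changes sign in a given direction it cannot immediately change back, is needed. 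I would present the Hamiltonian-constancy argument for the first claim and the $\dot\lambda$-sign / monotonicity argument for the second.

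The main obstacle I anticipate is the second part: translating ``at most two switching \emph{states}'' into ``at most two switches in \emph{time}'' rigorously, since a priori the trajectory could pass through a switching state, move away, and return. Handling this requires knowing the qualitative phase portrait in the $(x,\lambda)$ plane — in particular that on each bang arc $x$ is strictly monotone, that the two switching states (if both exist) are ordered consistently with the direction of the flow, and that $x_{\rm ncl}$ separates ``$\lambda$-increasing'' from ``$\lambda$-decreasing'' switches — so the careful bookkeeping of equilibrium positions from Section~\ref{sec:assum-prelim} (the inequalities $0<r^+_{\mu_I}<r^+_0<1$, $r^-_0<0$, etc.) will do the real work here.
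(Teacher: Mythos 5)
Your Hamiltonian-constancy argument for the first assertion is exactly the paper's proof: at a switching time $\lambda=C$, so by constancy of $H$ the switching state solves $x+C(\mu_I+f(x))=x_T$, a strictly concave quadratic with at most two roots $x^{\dagger}_1<x^{\dagger}_2$. That part is complete as stated.

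For the second assertion there are two problems. First, a sign error: $\dot\lambda(t^{\dagger})=-1-Cf'(x(t^{\dagger}))=-(1+BC)+2ACx(t^{\dagger})$ is \emph{increasing} in $x(t^{\dagger})$ and vanishes at $\xbar=\frac{1+BC}{2AC}$ (your $x_{\rm ncl}$), so $\dot\lambda(t^{\dagger})>0$ when $x(t^{\dagger})>\xbar$ and $\dot\lambda(t^{\dagger})<0$ when $x(t^{\dagger})<\xbar$ --- the opposite of what you wrote. Concretely, the smaller root $x^{\dagger}_1<\xbar$ carries the $\tildemuI\to 0$ switch and the larger root $x^{\dagger}_2>\xbar$ the $0\to\tildemuI$ switch (consistent with $g'(x^{\dagger}_1)>0$, $g'(x^{\dagger}_2)<0$ and $\dot\lambda(t^{\dagger})=-g'(x(t^{\dagger}))$ for $g(x)=x-x_T+C(\mu_I+f(x))$). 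Second, and more importantly, you correctly identify that ruling out a return to a switching state is the crux, but you leave it as a plan (``monotonicity of $x$ on bang arcs plus bookkeeping of equilibria will do the real work'') rather than an argument; mere alternation of the two switch types does not by itself bound their number. The paper closes this with a short autonomy argument that your sketch does not contain: downward crossings of the level $\lambda=C$ can only occur at $x^{\dagger}_1$ and upward crossings only at $x^{\dagger}_2$, so a third switch would force the extremal to revisit the point $(x^{\dagger}_1,C)$ of the autonomous $(x,\lambda)$ phase plane with the same outgoing data, hence to be periodic from that moment on, producing infinitely many switches separated by a fixed positive duration and never attaining the transversality condition $\lambda(T)=0$. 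This is the missing step; once you have it, no monotonicity of $x(\cdot)$ on bang arcs and no comparison with the equilibria $r_w^{\pm}$ is needed.
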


\begin{proof}
Suppose the final condition of the extremal trajectory $x(\cdot)$ is $x_T$. Then, if $t^{\dagger}$ is a switching time, we know that $\lambda(t^{\dagger})=C$, and the associated switching state $x^{\dagger}=x(t^{\dagger})$ satisfies
\begin{equation*}
H(x(t^{\dagger}),\lambda(t^{\dagger});w(t^{\dagger})) = x^{\dagger} + C(\mu_I+f(x^{\dagger})) = x_T.
\end{equation*}
Thus, any switching state $x^{\dagger}$ is a root of the function $g(x) = x-x_T + C(\mu_I+f(x))$. Since $f(\cdot)$ is strictly concave, so is $g(\cdot)$, concluding that $g(\cdot)$ can have at most two real roots. Let us denote these roots $x^{\dagger}_1<x^{\dagger}_2$. Then, $x(t^{\dagger})\in\{x^{\dagger}_1,x^{\dagger}_2\}$.

Since $g(\cdot)$ is strictly concave and differentiable, with $g(x^{\dagger}_1)=g(x^{\dagger}_2)=0$, there exists $\tilde x\in (x^{\dagger}_1,x^{\dagger}_2)$ such that $g'(\tilde x)=0$, and then $g'(x^{\dagger}_1)>0$ and $g'(x^{\dagger}_2)<0$. Observe that if the switch is performed at $x(t^{\dagger})=x^{\dagger}_1$, then
\begin{equation*}
g'(x^{\dagger}_1) = g'(x(t^{\dagger})) = 1+f'(x(t^{\dagger})) = \dot \lambda(t^{\dagger})<0,
\end{equation*}
which implies that the switch is from $w=\mu_I$ to $w=0$. Similarly, if the switch is performed at $x(t^{\dagger})=x^{\dagger}_2$, then $\dot \lambda(t^{\dagger})>0$, so the switch is from $w=0$ to $w=\mu_I$.

We claim that, if the switching point is $x(t^{\dagger})=x^{\dagger}_1$, there cannot exist a future switch. Suppose, by contradiction, the existence of another switching time $t^{\ddagger}$, with $t^{\dagger}<t^{\ddagger}<T$, such that for all $t\in(t^{\dagger},t^{\ddagger})$, $0<\lambda(t)<C$ . Thus, necessarily $x(t^{\ddagger})=x^{\dagger}_2$. Since $\lambda(T)=0$, there must exist another switching time $t^{\S}$, with $t^{\ddagger}<t^{\S}<T$, such that $\lambda(t)>C$ for all $t\in(t^{\ddagger},t^{\S})$, and necessarily $x(t^{\S})=x^{\dagger}_1$. Since the ODE system for $(x,\lambda)$ is time-autonomous, there must exist an infinite sequence of switches, with nonvanishing length between switches. Then, the final condition $\lambda(T)=0$ is never satisfied, contradicting that $(x(\cdot),\lambda(\cdot))$ is a Pontryagin extremal.
\end{proof}

\begin{remark}\label{rem:switches}
From Proposition \ref{prop:switches} we observe that for any value $x_T$ of the Hamiltonian ($H=x_T$), the solutions $x^{\dagger}_1$ and $x^{\dagger}_2$ of 
\begin{equation*}
x+C(\mu_I+f(x))-x_T=0
\end{equation*} 
satisfy
\begin{equation*}
\frac{x^{\dagger}_1+x^{\dagger}_2}{2} = \frac{1+BC}{2AC},
\end{equation*}
which is constant respect to $x_T$. So, given an extremal trajectory, its switching states are at most two, and one is a reflection of the other respect to $\frac{1+BC}{2AC}$.
\end{remark}

\subsection{Feasible curves}

We take advantage of the constancy of the Hamiltonian, to generate a simpler characterization of the Pontryagin extremals and, in consequence, optimal solutions. For this, we start by defining the concept of feasible curves in the $(x,\lambda)$ state space.

\begin{definition}\label{def:feasible_curves}
We say that a continuous curve $\zeta(\xi)=(x(\xi),\lambda(\xi))$, $\xi\in[\xi_0,\xi_f]$ in the $(x,\lambda)-$state space is feasible if $\lambda(\xi_f)=0$ and $H(x(\xi),\lambda(\xi);\tilde w(x(\xi),\lambda(\xi)))=x(\xi_f)$ a.e. $\xi\in[\xi_0,\xi_f]$, where $\tilde w(\cdot)$ is as in \eqref{eq:control_min_ham}.
\end{definition}

\begin{remark}\label{rem:lambdas}

Consider an extremal trajectory $(x(\cdot),\lambda(\cdot))$ with final point $(x_T,0)$. The part of the curve that lies in the set $\left\{(x,\lambda)\,|\,\lambda<C\right\}$ satisfies $x +\lambda(\mu_I+f(x)) = x_T$. This allows to define a function $\lambda_{\inf}(x;x_T)$ such that $x\mapsto(x,\lambda_{\inf}(x;x_T))$ is a feasible curve. This function is defined by
\begin{equation}\label{eq:def_lambda_inf}
\lambda_{\inf}(x;x_T) = \frac{x_T-x}{\mu_I+f(x)} .
\end{equation}

Now, consider an extremal trajectory $(x(\cdot),\lambda(\cdot))$, with a part of the trajectory in the set $\left\{(x,\lambda)\,|\,\lambda>C\right\}$, and passing  through the point $(\xref,C)$. Then, it satisfies $(C-\lambda)\tildemuI + x +\lambda(\mu_I+f(x)) = \xref + C(\mu_I+f(\xref))$. This allows to define a function $\lambda_{\sup}(x;\xref)$ such that the Hamiltonian is constant along the curve $x\mapsto(x,\lambda_{\sup}(x;\xref))$. This function is defined by
\begin{equation}\label{eq:def_lambda_sup}
\lambda_{\sup}(x;\xref) = \frac{\xref-x+Cf(\xref)}{f(x)}.
\end{equation}
\end{remark}

Observe that the feasible curves, defined in Definition \ref{def:feasible_curves}, can be constructed from the functions $\lambda_{\inf}(\cdot),\lambda_{\sup}(\cdot)$. Indeed, Remark \ref{rem:lambdas} has all the necessary information for the construction of feasible curves. Our interest in these feasible curves lies in the fact that Pontryagin extremals (and thus, optimal trajectories) are indeed feasible curves.

Let us define $\lambdancl(\cdot)$ as the $\lambda-$nullcline, which, from \eqref{eq:eq_adjunta}, has an explicit expression as a function of $x$:
\begin{equation}\label{eq:lambda_nulclina}
\lambdancl(x):=-1/f'(x)=-1/(B-2Ax), \quad x\neq \frac{B}{2A},
\end{equation} 
and we denote by $\xbar$ the intersection point between $\lambdancl(\cdot)$ and the set $\{(x,\lambda)\,|\,\lambda=C\}$ (where switches occur), that satisfies $Cf'(\xbar)=-1$. This point has the explicit expression
\begin{equation}\label{eq:def_xbar}
\xbar:=\frac{1+BC}{2AC}.
\end{equation}
Notice that this is the point that appears in Remark \ref{rem:switches}. On the other hand, from the expression of the switching function $\psi(t) = \psi(x(t),\lambda(t))$ given in \eqref{eq:sfpsi}, we notice that $\xbar$ defined in \eqref{eq:def_xbar} is the unique state (due to the concavity of $f(\cdot)$)  such that $\psi=\dot \psi =0$, that is, $\xbar$ is a singular arc. Nevertheless, from Proposition \ref{prop:miscelaneos_Pontryagin} we know there are no optimal singular arcs, therefore, according to \cite{BoscainPiccoli} the point $\xbar$ is an anti-turnpike singular arc.

\begin{equation}\label{eq:def_xbar2}
\rbar := \frac{\rmumas+\rzeromas}{2},
\end{equation}
the mean value of the positive equilibria of the dynamics under $w=\mu_I$ (denoted by $\rmumas$) and $w=0$ (denoted by $\rzeromas$),  with $\rmumas,\rzeromas$ as in \eqref{eq:rmui}-\eqref{eq:rzero}. As Figure \ref{fig:diag_fase} shows, the place of $\xbar$ with respect to $\rmumas$, $\rzeromas$, and $\rbar$ determines the qualitative behavior of the feasible curves given by the functions $\lambda_{\inf}(\cdot)$ and $\lambda_{\sup}(\cdot)$.

\begin{center}
\includegraphics[scale=0.33]{./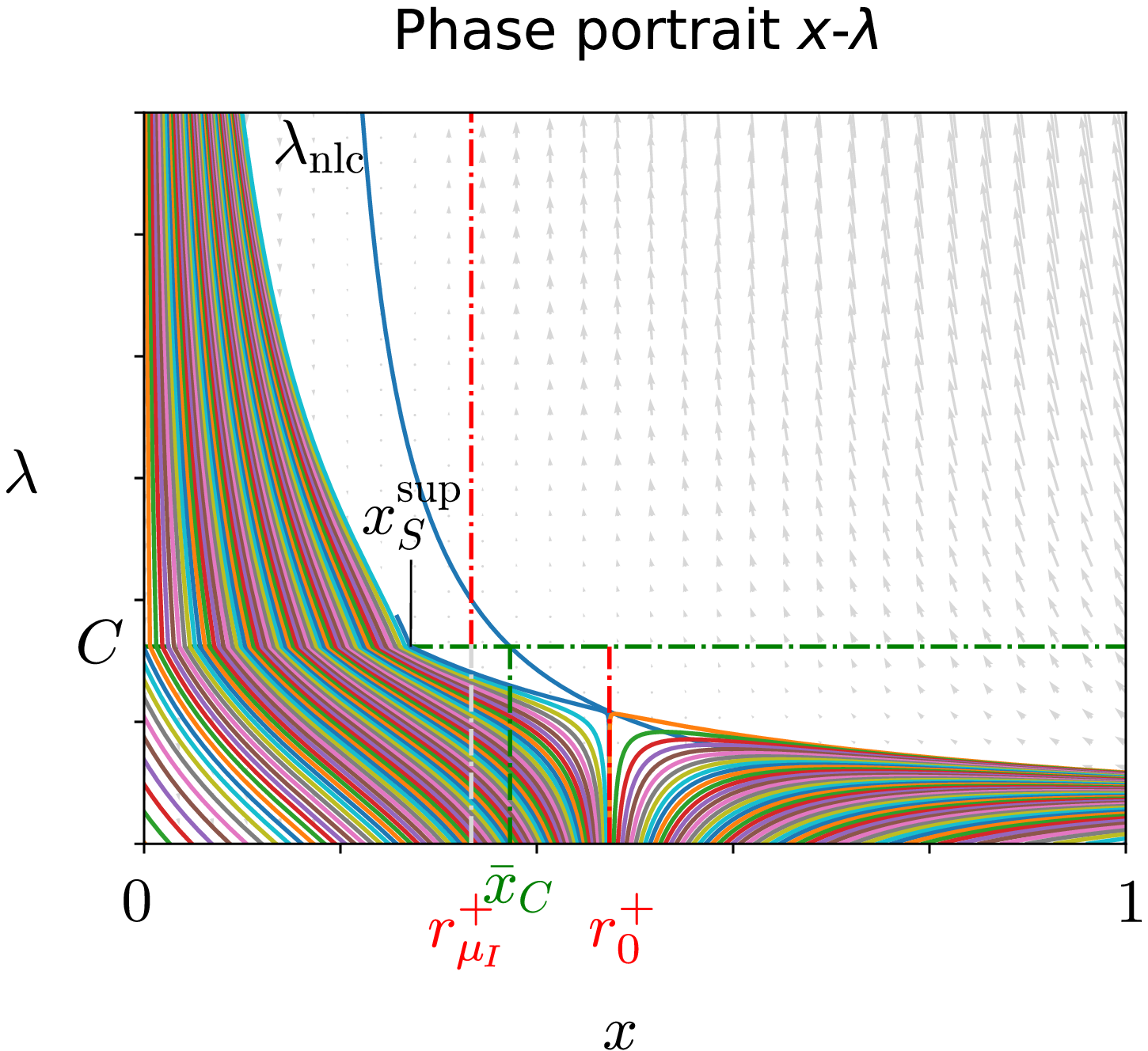}\ \
\includegraphics[scale=0.33]{./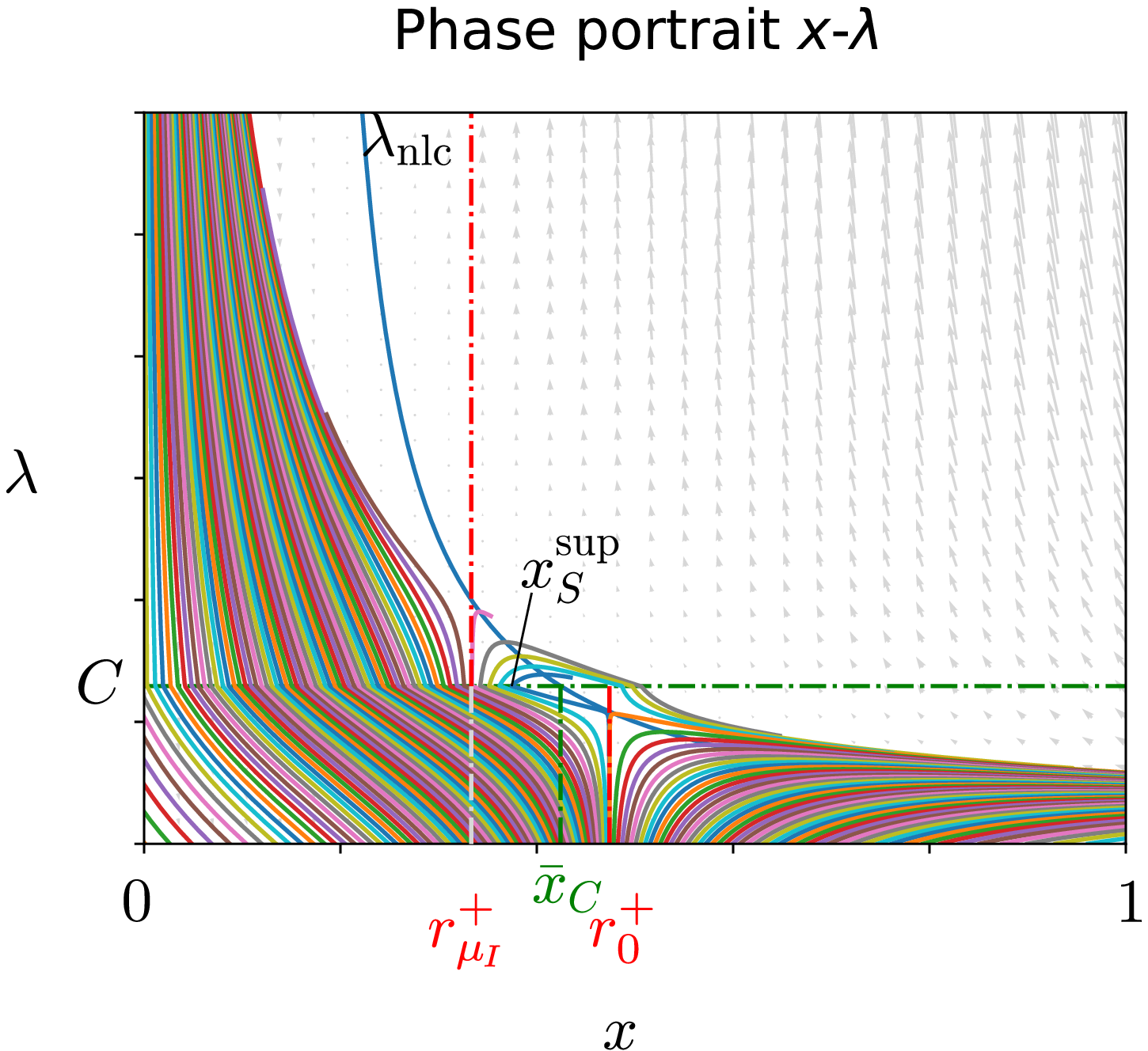}\ \
\includegraphics[scale=0.33]{./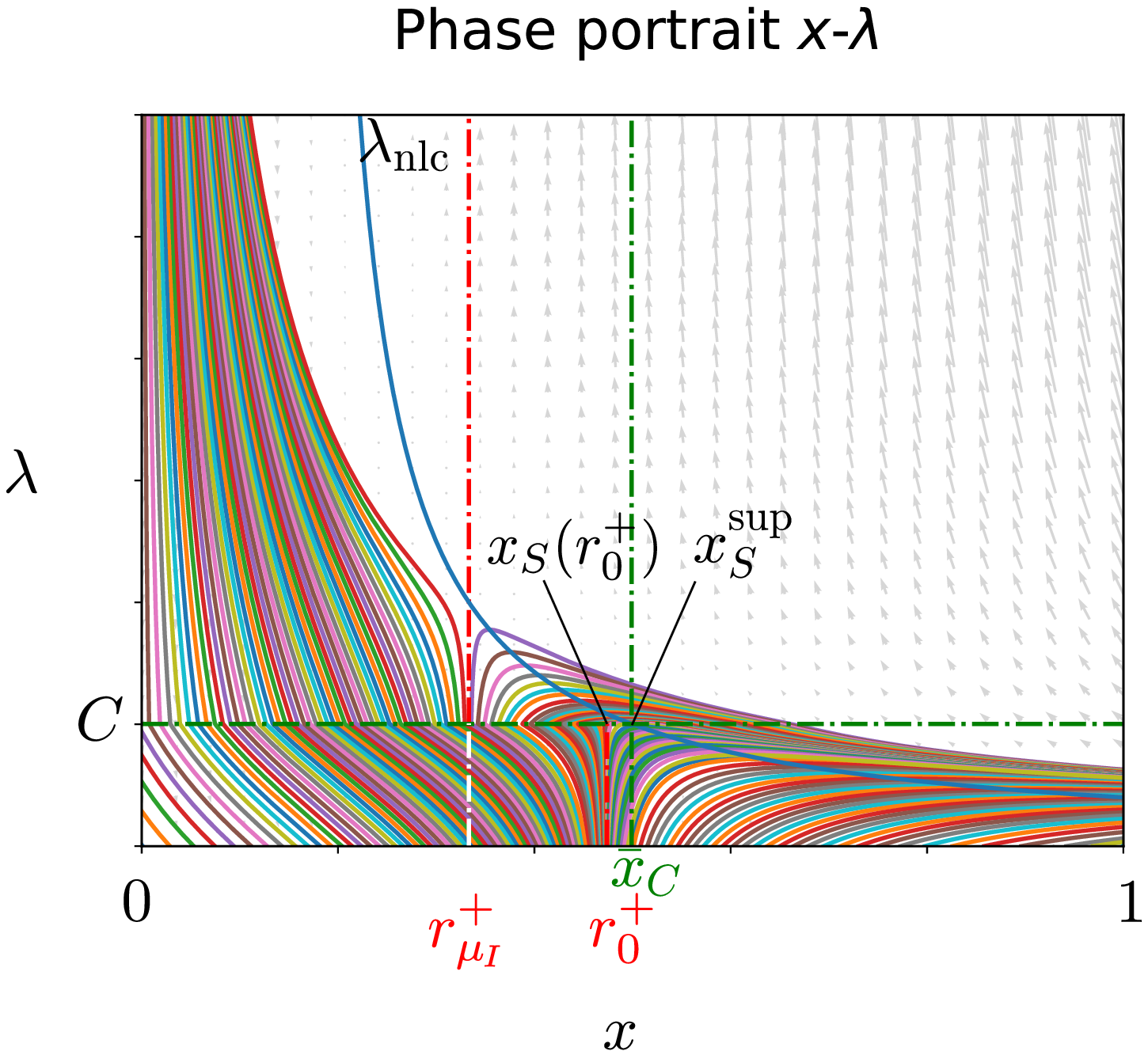}\ \
\captionof{figure}{Diagram of the feasible curves, given a mesh of final conditions $x_T$, in the $(x,\lambda)-$state space: Left, $\xbar<\rbar$. Center, $\rbar<\xbar<\rzeromas$. Right, $\xbar>\rzeromas$.}\label{fig:diag_fase}
\end{center}

\begin{remark}\label{rem:params_y_rs}
Notice that 
\begin{enumerate}\itemsep0em
\item $\xbar<\rmumas$  if and only if $1<BC$; 
\item $\xbar<\rbar$ if and only if $2<C(B+\sqrt{\Delta})$;
\item $\xbar\geq\rzeromas$ if and only if $1\geq C\sqrt{\Delta}$.
\end{enumerate}
\end{remark}

Regarding the function $\lambda_{\inf}(\cdot)$, given by \eqref{eq:def_lambda_inf}, we have the following lemma:

\begin{lemma}\label{lemma:lambda_inf} 
For $\lambda_{\inf}(\cdot)$, we have:
\begin{enumerate}
\item For each fixed $x_T\in(\rzeromenos,\rzeromas)$, the function $x\mapsto\lambda_{\inf}(x;x_T)$ is decreasing in $(\rzeromenos,\rzeromas)$, strictly positive in $(\rzeromenos,x_T)$, has a zero in $x_T$, is negative in $(x_T,\rzeromas)$, and strictly positive in $(\rzeromas,1]$. Moreover, $\lim_{x\searrow\rzeromenos}\lambda_{\inf}(x;x_T)=\infty$ and $\lim_{x\nearrow\rzeromas}\lambda_{\inf}(x;x_T)=-\infty$.

\item For each fixed $x_T\in(\rzeromas,1]$, the function $x\mapsto\lambda_{\inf}(x;x_T)$ is strictly positive in $(\rzeromenos,\rzeromas)$, negative in $(\rzeromas,x_T)$, has a zero in $x_T$, is strictly positive in $(x_T,1]$, and has a unique maximum $x^{\dagger}=x^{\dagger}(x_T)>x_T>\rzeromas$ characterized as the unique solution of $\lambda_{\inf}(x^{\dagger};x_T)=\lambdancl(x^{\dagger})$ (with $\lambdancl(\cdot)$ defined in \eqref{eq:lambda_nulclina}).

\item For each fixed $x<\rzeromas$, the function $x_T\mapsto \lambda_{\inf}(x;x_T)$ is increasing in $x_T\in(\rzeromenos,\rzeromas)$. 

\item For each fixed $x>\rzeromas$, the function $x_T\mapsto \lambda_{\inf}(x;x_T)$ is decreasing in $x_T\in(\rzeromas,1]$. 

\end{enumerate}
\end{lemma}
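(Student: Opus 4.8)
The strategy is to treat $\lambda_{\inf}(x;x_T) = (x_T-x)/(\mu_I+f(x))$ as a ratio and read off all monotonicity and sign information from the two factors separately, using repeatedly that $\mu_I + f(x) = -A(x-\rzeromenos)(x-\rzeromas)$ is a downward parabola with roots $\rzeromenos < 0 < \rzeromas$, hence positive exactly on $(\rzeromenos,\rzeromas)$ and negative on $(\rzeromas,1]$. The sign statements in items 1 and 2 are then immediate: the numerator $x_T-x$ is positive for $x<x_T$ and negative for $x>x_T$, and combining signs of numerator and denominator on the subintervals delimited by $x_T$ and $\rzeromas$ gives exactly the claimed sign pattern in each of the two cases $x_T<\rzeromas$ and $x_T>\rzeromas$. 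The limits as $x\searrow\rzeromenos$ and $x\nearrow\rzeromas$ in item 1 follow because the denominator tends to $0$ through positive values while the numerator tends to $x_T-\rzeromenos>0$ and $x_T-\rzeromas<0$ respectively.

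For the monotonicity in $x$ (items 1 and 2) I would differentiate, obtaining
\begin{equation*}
\frac{\partial}{\partial x}\lambda_{\inf}(x;x_T) = \frac{-(\mu_I+f(x)) - (x_T-x)f'(x)}{(\mu_I+f(x))^2},
\end{equation*}
so the sign is governed by the numerator $N(x):= -(\mu_I+f(x)) - (x_T-x)f'(x)$. Since $f$ is quadratic, $N$ is a quadratic polynomial in $x$; expanding with $f(x)=Bx-Ax^2$ one finds $N(x) = A x^2 - 2A x_T x + (A x_T^2? \dots)$ — more cleanly, one checks that $N$ vanishes precisely where $\lambda_{\inf}(x;x_T)=\lambdancl(x)$, because at such a point $(x_T-x) = \lambdancl(x)(\mu_I+f(x)) = -(\mu_I+f(x))/f'(x)$, i.e. $N(x)=0$. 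For item 1, with $x_T\in(\rzeromenos,\rzeromas)$, the relevant interval is where $\mu_I+f(x)>0$; there $\lambda_{\inf}$ and $\lambdancl$ cannot meet (one is positive then negative, the other has the wrong sign/asymptote at $\xasin=B/(2A)$), so $N$ has no zero on $(\rzeromenos,\rzeromas)$, and evaluating at one convenient point (e.g. near $\rzeromenos$, where $N\approx -(x_T-\rzeromenos)f'(\rzeromenos)<0$ since $f'(\rzeromenos)>0$) shows $N<0$ throughout, giving strict decrease. For item 2, with $x_T>\rzeromas$, on $(x_T,1]$ both $\lambda_{\inf}$ and $\lambdancl$ are positive and a crossing does occur; since $N$ is quadratic with negative leading coefficient after dividing by the relevant sign — here I would check the leading coefficient of $N$ is $+A>0$ wait — the point is $N$ changes sign exactly once on $(x_T,1]$, from $+$ to $-$ (checking $N>0$ just to the right of $x_T$, where the $-(x_T-x)f'(x)$ term dominates with the right sign, and $N<0$ near the maximum), which pins down the unique maximum at the solution of $\lambda_{\inf}(x^{\dagger};x_T)=\lambdancl(x^{\dagger})$. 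That this equation has a unique root in $(x_T,1]$ follows from the intermediate value theorem plus the sign analysis of $N$.

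For items 3 and 4, monotonicity in $x_T$ is the easiest: $\partial_{x_T}\lambda_{\inf}(x;x_T) = 1/(\mu_I+f(x))$, which is positive for $x<\rzeromas$ and negative for $x>\rzeromas$, giving exactly the two claims.

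\textbf{Anticipated main obstacle.} The routine parts are the sign bookkeeping (items 1, 2 signs and items 3, 4) and the limits. The delicate part is item 2's uniqueness of the maximum: one must rule out a second interior critical point of $x\mapsto\lambda_{\inf}(x;x_T)$ on $(x_T,1]$ and confirm the crossing with $\lambdancl$ is transversal and unique. The clean way is to show $N(x)$, being a quadratic with leading coefficient $A>0$, can have at most two real roots, and to locate them: one root lies in $(\rzeromas, x_T)$ region where it is irrelevant (or is excluded by a sign check), leaving a single root in $(x_T,1]$; equivalently, argue directly that $h(x):=\lambda_{\inf}(x;x_T)-\lambdancl(x)$ is strictly monotone where both are finite and positive, so it has at most one zero there. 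Pinning down which of the two roots of $N$ falls in the interval of interest, and verifying $1$ lies beyond the maximum (i.e. $N(1)<0$, equivalently $\lambda_{\inf}(1;x_T)<\lambdancl(1)$), is the one computation I would actually carry out carefully; everything else is sign-chasing on the factored forms of $\mu_I+f(x)$ and $f'(x)$.
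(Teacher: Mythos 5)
Your overall route is the same as the paper's: read the signs and limits off the factorization $\mu_I+f(x)=-A(x-\rzeromenos)(x-\rzeromas)$, differentiate in $x$ and study the quadratic numerator, characterize critical points by $\lambda_{\inf}=\lambdancl$, and get items 3--4 from $\partial_{x_T}\lambda_{\inf}=1/(\mu_I+f(x))$. The one step whose stated justification would not survive scrutiny is the no-critical-point claim in item 1. You argue that $N$ cannot vanish on $(\rzeromenos,\rzeromas)$ because $\lambda_{\inf}$ and $\lambdancl$ ``cannot meet (one is positive then negative, the other has the wrong sign/asymptote).'' That sign comparison only works away from the vertex $B/(2A)$ of $f$: since $B/(2A)$ is the midpoint of $(\rzeromenos,\rzeromas)$, on the subinterval between $x_T$ and $B/(2A)$ the two functions carry the \emph{same} sign (both negative if $x_T<B/(2A)$, both positive if $x_T>B/(2A)$), so a crossing is not excluded by signs alone. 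The correct way to close this --- and it is exactly the expansion you started and then abandoned --- is to finish the computation: $N(x)=-Ax^2+2Ax_Tx-(\mu_I+Bx_T)$, whose discriminant is $-4A(\mu_I+f(x_T))$, strictly negative precisely when $x_T\in(\rzeromenos,\rzeromas)$; hence $N$ has no real root at all, keeps the sign of $N(\rzeromenos)=-(x_T-\rzeromenos)f'(\rzeromenos)<0$, and $\lambda_{\inf}(\cdot\,;x_T)$ is strictly decreasing. This is precisely the paper's argument. The same discriminant, now positive for $x_T>\rzeromas$, gives the two roots $x_T\pm\sqrt{-(\mu_I+f(x_T))/A}$ symmetric about $x_T$ for item 2; checking $N(\rzeromas)=(x_T-\rzeromas)\sqrt{\Delta}>0$ places the smaller root below $\rzeromas$, so the larger root is the unique critical point on $(\rzeromas,\infty)$ and is a maximum. (The paper reaches the same uniqueness by showing $x\mapsto x-(\mu_I+f(x))/f'(x)$ is strictly increasing for $x>\rzeromas$; either works.) Your item 2 sketch, including the hesitation about the leading coefficient --- it is $-A$, not $+A$, with your sign convention for $N$ --- and the worry about $N(1)$, resolves itself once this computation is done; the rest of the proposal (signs, limits, items 3--4) is correct and matches the paper.
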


\begin{proof}
We remark that the zeros of the denominator of $\lambda_{\inf}(\cdot)$ are $\rzeromenos<0$ and $\rzeromas>0$, thus $x\mapsto \mu_I+f(x)$ is positive in $(\rzeromenos,\rzeromas)$ and negative in $(\rzeromas,1]$.
\begin{enumerate}
\item The part of the signs is direct. Now, 
\begin{equation}\label{eq:d_lambda_inf}
\frac{\partial\lambda_{\inf}}{\partial x} (x;x_T) = -\frac{\mu_I+f(x)+(x_T-x)f'(x)}{(\mu_I+f(x))^2}.
\end{equation}
Thus, 
\begin{equation*}\label{eq:xdagger}
\frac{\partial\lambda_{\inf}}{\partial x} (x^{\dagger};x_T)=0\quad \Leftrightarrow\quad x^{\dagger} = x_T\pm \sqrt{ x_T^2 - \frac{\mu_I+Bx_T}{A} },
\end{equation*}
the solutions being real if and only if $\mu_I+f(x_T)\leq0$, which is not possible if $x_T\in(\rzeromenos,\rzeromas)$. Thus, $\frac{\partial}{\partial x} \lambda_{\inf}(\,\cdot\,;x_T)$ does not change its sign, concluding that $\lambda_{\inf}(\,\cdot\,;x_T)$ is decreasing for $x\in(\rzeromenos,\rzeromas)$. Now, the convergences are direct, since the zeros of the denominator of $\lambda_{\inf}(\cdot)$ are $\rzeromas$ and $\rzeromenos$. 

\item The part of the signs is direct. Moreover, $\lim_{x\searrow\rzeromas}\lambda_{\inf}(x;x_T)=-\infty$, and $\lim_{x\nearrow\infty}\lambda_{\inf}(x;x_T)=0$, so the existence of a maximum is guaranteed in the interval $(\rzeromas,\infty)$. Now, if $x^{\dagger}$ is a maximum of $\lambda_{\inf}(\,\cdot\,;x_T)$, it satisfies $\frac{\partial}{\partial x} \lambda_{\inf}(x^{\dagger};x_T)=0$. Using \eqref{eq:d_lambda_inf}, we have
\begin{equation}\label{eq:auxx01}
\frac{\partial\lambda_{\inf}}{\partial x} (x^{\dagger};x_T)=0\quad \Leftrightarrow\quad x_T-x^{\dagger} = -\frac{\mu_I+f(x^{\dagger})}{f'(x^{\dagger})}.
\end{equation}
Thus, replacing the previous expression in \eqref{eq:def_lambda_inf}
\begin{equation}\label{eq:auxx02}
\lambda_{\inf}(x^{\dagger};x_T) = \frac{-1}{f'(x^{\dagger})} = \lambdancl(x^{\dagger}).
\end{equation}
Now, \eqref{eq:auxx01} implies that any local maximizer/minimizer $x^{\dagger}$ of $x\mapsto\lambda_{\inf}(x;x_T)$ satisfies $x_T=g(x^{\dagger})=x^{\dagger}-\frac{\mu_I+f(x^{\dagger})}{f'(x^{\dagger})}$. Notice that $g'(x):=\frac{(\mu_I+f(x))f''(x)}{f'(x)^2}$, which is strictly positive for $x\geq x_T>\rzeromas$. Then, we conclude that \eqref{eq:auxx01} has only one solution $x^{\dagger}>x_T>\rzeromas$; that is, $x\mapsto\lambda_{\inf}(x;x_T)$ has only one maximum $x^{\dagger}(x_T)$ in the set $(\rzeromas,1]$, characterized by \eqref{eq:auxx02}.

\item Since, for any $x<\rzeromas$ fixed, the denominator of $\lambda_{\inf}(x;x_T)$ is positive, $x_T\mapsto\lambda_{\inf}(x;x_T)$ is an increasing affine function.
\item Since, for any $x>\rzeromas$ fixed, the denominator of $\lambda_{\inf}(x;x_T)$ is negative, $x_T\mapsto\lambda_{\inf}(x;x_T)$ is a decreasing affine function.
\end{enumerate}
\end{proof}


For each final state $x_T\in[0,1]$, we denote by $x_S(x_T)$  the last switching point (last with respect to time) of the feasible curve $(x(\cdot),\lambda(\cdot))$ that passes through the point $(x_T,0)$ (final state, final adjoint state) in the $(x,\lambda)$ state space, if such switching point exists. In such case, $x_S(x_T)$ can be found using $\lambda_{\inf}(\cdot)$, as it satisfies 
\begin{equation}\label{eq:def_xs_xt}
\lambda_{\inf}(x_S(x_T);x_T)=C.
\end{equation}

The following lemma characterizes the existence of the state of last switching $x_S(x_T)$, and gives explicit formulas.

\begin{lemma}\label{lemma:xs_xt}
Given a final state $x_T\in[0,1]$, for the last switching point $x_S(x_T)$ we have:
\begin{enumerate}
\item If $x_T<\rzeromas$, then $x_S(x_T)$ always exists; it holds $x_S(x_T)\leq x_T$; and we have the explicit formula 
\begin{equation}\label{eq:xsxtxchico}
x_S(x_T) = \xbar-\sqrt{ \kappa(x_T) }, \mbox{ with } \kappa(x_T):=\xbar^2 + \frac{\mu_I}{A} - \frac{x_T}{CA} .
\end{equation}

\item If $x_T>\rzeromas$, the point $x_S(x_T)$ exists only for the parametric configuration $\rzeromas\leq\xbar$, for $x_T$ in the domain $\rzeromas<x_T\leq\rzeromas+AC(\xbar-\rzeromas)^2$. It holds $x_S(x_T)\geq x_T$; and we have the explicit formula 
\begin{equation}\label{eq:xsxtgrande}
x_S(x_T) = \xbar-\sqrt{ (\xbar-\rzeromas)^2-\frac{x_T-\rzeromas}{AC} }.
\end{equation}

\item If $x_T=\rzeromas$, we can define (with lateral limits, if necessary)
\begin{equation*}
x_S(\rzeromas):= \lim_{x_T\rightarrow\rzeromas}x_S(x_T).
\end{equation*}
In such case, we have $x_S(\rzeromas)=\sup\{x_S(x_T)\,|\, x_T\in(\rzeromenos,\rzeromas)\}$, and it satisfies:
\begin{equation}\label{eq:xs_xhash}
x_S(\rzeromas) = \left\{\quad\begin{split}
2\xbar-\rzeromas,\quad& \mbox{ if }\quad \xbar<\rzeromas,\\
\rzeromas,\quad& \mbox{ if }\quad \rzeromas\leq\xbar .
\end{split}
\right.
\end{equation}
\end{enumerate}
\end{lemma}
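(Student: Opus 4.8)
The plan is to solve the defining relation $\lambda_{\inf}(x_S(x_T);x_T)=C$ of \eqref{eq:def_xs_xt} together with the qualitative information already contained in Lemma~\ref{lemma:lambda_inf}. First I would reduce the equation to a quadratic: since $\lambda_{\inf}(x;x_T)=\frac{x_T-x}{\mu_I+f(x)}$ by \eqref{eq:def_lambda_inf}, the equation $\lambda_{\inf}(x;x_T)=C$ is equivalent to $x_T-x=C(\mu_I+f(x))$, i.e. to $ACx^2-(1+BC)x+(x_T-C\mu_I)=0$; with $\xbar=\frac{1+BC}{2AC}$ from \eqref{eq:def_xbar} its roots are $\xbar\pm\sqrt{\kappa(x_T)}$ with $\kappa(x_T)=\xbar^2+\frac{\mu_I}{A}-\frac{x_T}{CA}$. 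I would also record the identity $\kappa(x_T)=(\xbar-\rzeromas)^2-\frac{x_T-\rzeromas}{AC}$, obtained by substituting $\mu_I=A(\rzeromas)^2-B\rzeromas$ (from $\mu_I+f(\rzeromas)=0$) and $B=2A\xbar-\tfrac1C$; this is the bridge between the two formulas \eqref{eq:xsxtxchico} and \eqref{eq:xsxtgrande} and is the one algebraic step that must be done carefully.

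For item~1, note that $x_T\in[0,1)$ with $x_T<\rzeromas$ forces $x_T\in(\rzeromenos,\rzeromas)$ since $\rzeromenos<0$. On that interval Lemma~\ref{lemma:lambda_inf}(1) says $x\mapsto\lambda_{\inf}(x;x_T)$ is continuous and strictly decreasing from $+\infty$ to $-\infty$, so it meets the level $C>0$ at exactly one point, which proves existence of $x_S(x_T)$; since $\lambda_{\inf}(x_T;x_T)=0<C$, monotonicity gives $x_S(x_T)\le x_T$. To pick the right root, set $g(x):=x+C(\mu_I+f(x))$, a downward parabola with vertex at $\xbar$; as $\mu_I+f(x_T)>0$ on $(\rzeromenos,\rzeromas)$ we get $g(x_T)>x_T$, so $x_T$ lies strictly between the roots, $\xbar-\sqrt{\kappa(x_T)}<x_T<\xbar+\sqrt{\kappa(x_T)}$. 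Combined with $x_S(x_T)\le x_T$ this forces $x_S(x_T)=\xbar-\sqrt{\kappa(x_T)}$, which is \eqref{eq:xsxtxchico}.

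For item~2, I would use the shape of $\lambda_{\inf}(\cdot;x_T)$ on $(\rzeromas,\infty)$ from Lemma~\ref{lemma:lambda_inf}(2): non-positive on $(\rzeromas,x_T]$, and on $(x_T,\infty)$ rising from $0$ at $x_T$ to a unique maximum at $x^{\dagger}(x_T)>x_T$ of height $\lambda_{\inf}(x^{\dagger};x_T)=\lambdancl(x^{\dagger})$ (equation \eqref{eq:auxx02}) before decaying to $0$. Hence any solution of $\lambda_{\inf}(\cdot;x_T)=C$ lies in $(x_T,\infty)$, so $x_S(x_T)\ge x_T$ when it exists, and it exists iff $C\le\lambdancl(x^{\dagger}(x_T))$. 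Since $\xbar>\xasin$ and $\lambdancl$ is strictly decreasing and positive on $(\xasin,\infty)$ while $\lambdancl(\xbar)=C$ (from $Cf'(\xbar)=-1$), this is the condition $x^{\dagger}(x_T)\le\xbar$; because $x^{\dagger}(\cdot)$ is increasing (the relation $x_T=x^{\dagger}-\frac{\mu_I+f(x^{\dagger})}{f'(x^{\dagger})}$ from the proof of Lemma~\ref{lemma:lambda_inf}(2) has increasing right-hand side) and $x^{\dagger}(x_T)>x_T>\rzeromas$, existence forces $\rzeromas<\xbar$ and then holds exactly for $x_T$ up to the value attained when $x^{\dagger}=\xbar$, namely $x_T\le\xbar+C(\mu_I+f(\xbar))=\rzeromas+AC(\xbar-\rzeromas)^2$. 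The formula follows as in item~1: $g(x_T)<x_T$ (since $\mu_I+f(x_T)<0$ here) places $x_T$ outside $[\xbar-\sqrt{\kappa(x_T)},\xbar+\sqrt{\kappa(x_T)}]$, existence then forces $x_T<\xbar-\sqrt{\kappa(x_T)}$, both roots exceed $x_T$, and $x_S(x_T)$, being the crossing reached first as $x$ increases from $x_T$, equals $\xbar-\sqrt{\kappa(x_T)}$; rewriting $\kappa$ with the identity above gives \eqref{eq:xsxtgrande}.

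For item~3, Lemma~\ref{lemma:lambda_inf}(3) says $x_T\mapsto\lambda_{\inf}(x;x_T)$ is increasing for each fixed $x<\rzeromas$, which together with the decrease of $\lambda_{\inf}(\cdot;x_T)$ makes $x_T\mapsto x_S(x_T)$ increasing on $(\rzeromenos,\rzeromas)$; hence $\sup\{x_S(x_T):x_T\in(\rzeromenos,\rzeromas)\}$ is the left limit $x_S(\rzeromas)$. Plugging $x_T=\rzeromas$ into \eqref{eq:xsxtxchico} gives $\kappa(\rzeromas)=(\xbar-\rzeromas)^2$, so $x_S(\rzeromas)=\xbar-|\xbar-\rzeromas|$, equal to $2\xbar-\rzeromas$ if $\xbar<\rzeromas$ and to $\rzeromas$ if $\xbar\ge\rzeromas$, which is \eqref{eq:xs_xhash}; when $\xbar>\rzeromas$ the right limit from item~2 yields the same value $\rzeromas$, and when $\xbar=\rzeromas$ the domain in item~2 is empty so only the left limit is used. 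The main obstacle is item~2: establishing the exact existence domain and, in particular, the necessity of $\rzeromas\le\xbar$ requires combining the geometry of Lemma~\ref{lemma:lambda_inf}(2) (the peak of $\lambda_{\inf}$ lying on the nullcline) with the monotonicity of $x^{\dagger}(\cdot)$ and the identity $\lambdancl(\xbar)=C$, and then converting the geometric condition $x^{\dagger}(x_T)\le\xbar$ into the explicit bound on $x_T$.
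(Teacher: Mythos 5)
Your proof is correct and follows essentially the same route as the paper's: reduce $\lambda_{\inf}(x;x_T)=C$ to the concave quadratic with roots $\xbar\pm\sqrt{\kappa(x_T)}$, use the monotonicity and peak structure of $\lambda_{\inf}(\cdot\,;x_T)$ from Lemma~\ref{lemma:lambda_inf} to establish existence and to select the smaller root, and obtain item~3 by continuity of the formula at $x_T=\rzeromas$. The only organizational difference is in item~2, where you derive the existence domain from the condition $x^{\dagger}(x_T)\le\xbar$ (equivalently, the peak value $\lambdancl(x^{\dagger}(x_T))\ge C=\lambdancl(\xbar)$) combined with the monotonicity of $x^{\dagger}(\cdot)$ — which makes the necessity of $\rzeromas\le\xbar$ more explicit than the paper's change-of-variables/discriminant argument — while both versions ultimately rest on the same identity $\xtsup=\xbar+C(\mu_I+f(\xbar))=\rzeromas+AC(\xbar-\rzeromas)^2$.
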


\begin{proof}
\begin{enumerate}
\item The switching point $x_S(x_T)$ satisfies $\lambda_{\inf}(x_S(x_T);x_T)=C$, which is equivalent to being a solution of
\begin{equation}\label{eq:eq_xs_xt}
g(x):=-ACx^2+(BC+1)x+C\mu_I-x_T=0.
\end{equation}
The solutions of \eqref{eq:eq_xs_xt} are given by
\begin{equation*}
x_S^{\pm}(x_T) = \xbar\pm\sqrt{ \kappa(x_T)  },~~~\mbox{ with }~~~ \kappa(x_T) = \xbar^2 + \frac{\mu_I}{A} - \frac{x_T}{CA}.
\end{equation*}
Notice that $\kappa(\cdot)$ is decreasing where it is well defined, and then, in order to prove that $x_S^{\pm}(x_T)$ exist for $x_T\leq\rzeromas$, it is enough to prove that $\kappa(\rzeromas)$ is nonnegative. Now, it is not difficult to see that
\begin{equation}\label{eq:ex_xs_xt}
\kappa(\rzeromas) \geq 0 \quad\Leftrightarrow\quad 1+C^2\Delta\geq 2C\sqrt\Delta \quad\Leftrightarrow\quad  (1-C\sqrt{\Delta})^2\geq0,
\end{equation}
which is always true (the equality holds only if $1=C\sqrt{\Delta}$).

Now, since $x\mapsto\lambda_{\inf}(x;x_T)$ is strictly decreasing in the interval $\rzeromenos<x<\rzeromas$ (Lemma \ref{lemma:lambda_inf}), with $\lim_{x\searrow\rzeromenos}\lambda_{\inf}(x;x_T)=\infty$ and $\lim_{x\nearrow\rzeromas}\lambda_{\inf}(x;x_T)=-\infty$, then there exists only one solution $x_S(x_T)\in(\rzeromenos,x_T)$ such that $(x_S(x_T),C)$ and $(x_T,0)$ belong to the same feasible curve. We claim that $x_S(x_T)=x_S^{-}(x_T)$, that is, the solution given in \eqref{eq:xsxtxchico}. Indeed, notice that $g(\cdot)$ defined in \eqref{eq:eq_xs_xt} is a concave quadratic function. Using that $A\rzeromas\,^2=B\rzeromas+\mu_I$, we get that $g(\rzeromas)=\rzeromas-x_T\geq0$. This implies that $x_S^{-}(x_T)\leq\rzeromas< x_S^{+}(x_T)$, from which we conclude that $x_S(x_T)=x_S^{-}(x_T)$.

\item Consider now $x_T>\rzeromas$. In this case we consider $x\mapsto\lambda_{\inf}(x;x_T)$ for $x>\rzeromas$. 
We perform the change of variables $x_T=\rzeromas+y_T$, $x_S(x_T)=\rzeromas + y_S(y_T)$, and $x=\rzeromas+y$ for $x>\rzeromas$. Replacing in \eqref{eq:def_xs_xt}, we obtain that $y_S(y_T)$ satisfies
\begin{equation*}
y_S(y_T)^2-( 2\xbar-\rzeromas )y_S(y_T)+\frac{y_T}{AC} = 0.
\end{equation*}
The two roots of this equation are
\begin{equation*}
y_S^{\pm}(y_T) = (\xbar-\rzeromas)\pm\sqrt{ (\xbar-\rzeromas)^2-\frac{y_T}{AC} },
\end{equation*}
obtaining 
\begin{equation*}
x_S^{\pm}(x_T) = \xbar\pm\sqrt{ (\xbar-\rzeromas)^2-\frac{x_T-\rzeromas}{AC} },
\end{equation*}
this expression being real if and only if $ (\xbar-\rzeromas)^2-\frac{x_T-\rzeromas}{AC} >0$, which is equivalent to $x_T \leq AC(\xbar-\rzeromas)^2 + \rzeromas$. This proves the part of the domain. Let us define by now $x_T^{\sup}=\rzeromas+AC(\xbar-\rzeromas)^2$

We claim that $x_S(x_T)=x_S^{-}(x_T)$ (the solution given in \eqref{eq:xsxtgrande}). Since Lemma \ref{lemma:lambda_inf} states that the function $x\mapsto\lambda_{\inf}(x;x_T)$ is increasing in the interval $(\rzeromas,x^{\dagger}(x_T))$ and decreasing in the interval $(x^{\dagger}(x_T),\infty)$, with a unique maximum $x^{\dagger}(x_T)$, it must hold $\lambda_{\inf}(x^{\dagger}(x_T);x_T)>C$ and $x_S^{-}(x_T)<x^{\dagger}(x_T)<x_S^{+}(x_T)$. Thus, for all $x\in[x_T,x_S^{-}(x_T)]$, the curve $x\mapsto(x,\lambda_{\inf}(x;x_T))$ satisfies $\lambda_{\inf}(x;x_T)\leq C$ and, then, we conclude that the last switch corresponds to $x_S(x_T)=x_S^{-}(x_T)$.\\

Now, consider $x_T\leq\rzeromas+AC(\xbar-\rzeromas)^2$. We have $\lambda_{\inf}(x_T;x_T)=0$ and $\lambda_{\inf}(x^{\dagger}(x_T);x_T)\geq C$. Indeed, it is enough to prove that $\lambdancl(x^{\dagger}(\xtsup))=C$, because in that case, 
\begin{equation*}
\begin{split}
\lambda_{\inf}(x^{\dagger}(x_T);x_T)\geq &\, \lambda_{\inf}(x^{\dagger}(\xtsup);x_T) \\
\geq &\, \lambda_{\inf}(x^{\dagger}(\xtsup);\xtsup) = \lambdancl(x^{\dagger}(\xtsup))=C. 
\end{split}
\end{equation*}

The point $x_T^{\star}$ such that $\lambdancl(x^{\dagger}(x_T^{\star}))=C$ satisfies $x^{\dagger}(x_T^{\star})=\xbar$. Since it also satisfies $\lambda_{\inf}(x^{\dagger}(x_T^{\star}),x_T^{\star})=\lambda_{\inf}(\xbar,x_T^{\star})=C$, we obtain that $x_T^{\star}=\xbar+C(\mu_I+f(\xbar))$. It is not difficult to check that $\xtsup = \xbar + C(\mu_I+f(\xbar))$, concluding that $x_T^{\star}=\xtsup$ and, then, $\lambdancl(x^{\dagger}(\xtsup))=C$.

Now, as $x\mapsto\lambda_{\inf}(x;x_T)$ is strictly increasing for $\rzeromas<x<x^{\dagger}(x_T)$, we conclude $x_S(x_T)\geq x_T$.

\item Notice that, from the point 1. of this proposition, $x_S(x_T)$ always exists if $x_T\in(\rzeromenos,\rzeromas)$. From \eqref{eq:xsxtxchico}, it is easy to see that $\kappa(\cdot)$ is decreasing, and then, $x_S(\cdot)$ is increasing, in $(\rzeromenos,\rzeromas)$. Then, $x_S(\rzeromas)=\sup\{x_S(x_T)\,|\, x_T\in(\rzeromenos,\rzeromas)\}$.

Now, we prove \eqref{eq:xs_xhash}. Suppose that $\xbar<\rzeromas$, which, according to Remark \ref{rem:params_y_rs}, is equivalent to $1<C\sqrt{\Delta}$. From \eqref{eq:ex_xs_xt}, this implies that $\kappa(\rzeromas)>0$ and, then, from \eqref{eq:xsxtxchico},
\begin{equation}\label{eq:aauuxx}
x_S(\rzeromas)=2\xbar-\rzeromas \quad\Leftrightarrow\quad  \rzeromas-\xbar = \sqrt{\kappa(\rzeromas)}.
\end{equation}
Since both terms of the equation at the right-hand side of the equivalence in \eqref{eq:aauuxx} are nonnegative, taking powers in both terms, we get that \eqref{eq:aauuxx} is equivalent to $\left( CB-2AC\xbar + 1 \right)\rzeromas  =  0 $, which is true from the definition of $\xbar$.

Now, suppose that $\rzeromas\leq\xbar$. From point 2. of this proposition, we have 
\begin{equation*}
x_S(x_T) = \xbar-\sqrt{ (\xbar-\rzeromas)^2-\frac{x_T-\rzeromas}{AC} }.
\end{equation*}
Taking limits as $x_T\searrow\rzeromas$, we conclude that $x_S(\rzeromas)=\lim_{x_T\searrow\rzeromas} x_S(x_T)=\rzeromas$.
\end{enumerate}

\end{proof}

Lemma \ref{lemma:xs_xt} allows us to define the domain of the last switching point function $x_S(\cdot)$ where is well defined. Indeed, depending on the parametric configuration, we define
\begin{equation}\label{eq:xtsup}
x_T^{\rm sup} = \left\{\quad \begin{split}
\rzeromas, & \quad \mbox{ if } \xbar<\rzeromas,\\
\rzeromas+AC(\xbar-\rzeromas)^2, & \quad \mbox{ if } \rzeromas\leq\xbar .
\end{split}\right.
\end{equation}
Then, from Lemma \ref{lemma:xs_xt}, the domain of $x_S(\cdot)$ is ${\rm dom\, } x_S = (\rzeromenos,\xtsup]$.

To characterize the optimal trajectories, we need to identify the supremum of the states of last switch. This point, characterized in the following result, gives  information about the initial conditions $x_0$ for which there may be a last switch along their associated optimal trajectories, as we will see in Remark \ref{remark:xssup}.

\begin{corollary}\label{lemma:xs_xhash}
The supremum of last switch states, given by
\begin{equation*}
x_{S}^{\sup} := \sup\{ x_S(x_T)\,\vert\, x_T\in {\rm dom}\, x_S \} ,
\end{equation*}
where $x_S(\cdot)$ is the last switch state function with domain ${\rm dom\, } x_S = (\rzeromenos,\xtsup]$, 
satisfies:
\begin{enumerate}
\item If $\xbar<\rbar$, then $\xssup=x_S(\rzeromas)<\rmumas$.
\item If $\rbar<\xbar<\rzeromas$, then $\rmumas<\xssup=x_S(\rzeromas)<\rzeromas$.
\item If $\rzeromas\leq\xbar$, then $\xssup=\xbar \geq\rzeromas$.
\end{enumerate}
\end{corollary}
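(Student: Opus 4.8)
The plan is to deduce Corollary~\ref{lemma:xs_xhash} directly from the monotonicity and the explicit formulas for $x_S(\cdot)$ established in Lemma~\ref{lemma:xs_xt}, together with the algebraic equivalences of Remark~\ref{rem:params_y_rs}. First I would locate where the supremum over ${\rm dom}\,x_S=(\rzeromenos,\xtsup]$ is attained. On $(\rzeromenos,\rzeromas)$ the formula \eqref{eq:xsxtxchico} writes $x_S(x_T)=\xbar-\sqrt{\kappa(x_T)}$ with $\kappa$ affine and decreasing, so $x_S$ is increasing there; on $(\rzeromas,\xtsup]$ (which only contributes when $\rzeromas\le\xbar$) the formula \eqref{eq:xsxtgrande} is likewise increasing in $x_T$ since the radicand decreases, and the two branches match continuously at $x_T=\rzeromas$ with common value $x_S(\rzeromas)$ given by \eqref{eq:xs_xhash}. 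Hence $x_S$ is increasing on its whole domain and $\xssup=x_S(\xtsup)$: when $\xbar<\rzeromas$ we have $\xtsup=\rzeromas$ (by \eqref{eq:xtsup}), so $\xssup=x_S(\rzeromas)=2\xbar-\rzeromas$; when $\rzeromas\le\xbar$, evaluating \eqref{eq:xsxtgrande} at $x_T=\xtsup=\rzeromas+AC(\xbar-\rzeromas)^2$ makes the radicand vanish, giving $\xssup=\xbar$.

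With $\xssup$ identified, the three inequalities reduce to elementary comparisons among $\xbar$, $\rbar$, $\rmumas$, $\rzeromas$. The key auxiliary fact I would record is $\rmumas<\rbar<\rzeromas$: since $\rbar=(\rmumas+\rzeromas)/2$ this is just $\rmumas<\rzeromas$, i.e.\ $B<\sqrt\Delta$, which holds because $\Delta=B^2+4A\mu_I>B^2$. Consequently $\xbar<\rbar$ forces $\xbar<\rzeromas$, so in cases~1 and~2 we are in the regime $\xssup=x_S(\rzeromas)=2\xbar-\rzeromas$. Then $2\xbar-\rzeromas<\rmumas\iff\xbar<\rbar$ and $\rmumas<2\xbar-\rzeromas\iff\rbar<\xbar$, while $2\xbar-\rzeromas<\rzeromas\iff\xbar<\rzeromas$; these are exactly the hypotheses of cases~1 and~2, respectively (and the condition $\xbar<\rbar$ is the one in Remark~\ref{rem:params_y_rs}(2)). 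Case~3 is immediate: if $\rzeromas\le\xbar$ then $\xssup=\xbar\ge\rzeromas$, as claimed.

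I do not expect a serious obstacle here; the corollary is essentially a repackaging of Lemma~\ref{lemma:xs_xt}. The only points requiring care are bookkeeping the case split correctly — in particular checking that $\xbar<\rbar$ (resp.\ $\rbar<\xbar<\rzeromas$) indeed places us on the branch $x_S(\rzeromas)=2\xbar-\rzeromas$ rather than $x_S(\rzeromas)=\rzeromas$ — and confirming that the supremum is realized at the right endpoint of ${\rm dom}\,x_S$ and not in the interior, which is precisely what the monotonicity argument of the first paragraph guarantees. Finally I would double-check the continuity of $x_S(\cdot)$ at $x_T=\rzeromas$ across the two formulas when $\xbar=\rzeromas$ (both give $\rzeromas$), so that the strict/nonstrict boundaries in the statement line up with Remark~\ref{rem:params_y_rs}.
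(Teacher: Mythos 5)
Your proposal is correct and follows essentially the same route as the paper: identify $\xssup$ via the monotonicity of $x_S(\cdot)$ and the formula \eqref{eq:xs_xhash} for $x_S(\rzeromas)$ (equal to $2\xbar-\rzeromas$ when $\xbar<\rzeromas$, and to $\xbar$ via the vanishing radicand in \eqref{eq:xsxtgrande} when $\rzeromas\leq\xbar$), then reduce cases 1--3 to the elementary equivalences $2\xbar-\rzeromas\lessgtr\rmumas\iff\xbar\lessgtr\rbar$ using $2\rbar=\rmumas+\rzeromas$. The only added content in your write-up is the explicit check $\rmumas<\rbar<\rzeromas$ (from $\Delta>B^2$) and the continuity of $x_S$ across $x_T=\rzeromas$, both of which the paper leaves implicit.
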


\begin{proof}
\begin{enumerate} 
\item Since $\xbar<\rbar<\rzeromas$, then $2\xbar-\rzeromas<\rmumas$. From \eqref{eq:xs_xhash}, and the definition of $x_S^{\sup}$, we conclude $x_S^{\sup}=x_S(\rzeromas)<\rmumas$.
\item Since $\rbar<\xbar<\rzeromas$, then $\rmumas<2\xbar-\rzeromas<\rmumas$. From \eqref{eq:xs_xhash}, and the definition of $x_S^{\sup}$, we conclude $\rmumas<x_S^{\sup}=x_S(\rzeromas)<\rmumas$.
\item From point 2 of Lemma \ref{lemma:xs_xt}, we see that $x_S(\cdot)$ is increasing. So $x_S^{\sup}=x_S(x_T^{\sup})=\xbar\geq\rzeromas$.
\end{enumerate}
\end{proof}

\section{Optimal synthesis and value function}\label{sec:sintesis}

Via Pontryagin techniques, we are able to characterize the number of switches that an optimal strategy can present, depending of the initial state.  Our objective is to provide a full synthesis of the optimal control, with respect to state and time, for all parametric configurations.  In this section, we construct the possible regions of switch in the $(x,t)$ state space, and then, via Hamilton-Jacobi-Bellman techniques, we verify that these regions are indeed regions of switch, and we characterize the optimal controls in feedback form.

\subsection{Switching times analysis}\label{sec:switching}

We aim to characterize the times and states of the switches of the optimal trajectories. From Proposition \ref{prop:switches} in Section \ref{subsec:pontryagin}, we know that every optimal trajectory has at most two switches. In what follows, we refer to the occurrence of a last switch before time $T$, for a given optimal trajectory, as the \emph{last switch} (when it exists). In the case that exists another switch for said trajectory, we refer its occurrence as the \emph{first switch}.

\subsubsection{Last switch}

Each state of last switch $x_s=x_S(x_T)\in[x_S(0),\xssup)$ of an optimal trajectory $x(\cdot)$, where $x_S(\cdot)$ is the last switching point function and $\xssup$ is the supremum of last switch states,  is connected with the final point $x_T=x(T)\in[0,\xtsup)$, where $\xtsup$ defines the domain of the last switching point function $x_S(\cdot)$ by ${\rm dom\, } x_S = (\rzeromenos,\xtsup]$ (see \eqref{eq:xtsup}), via the application of a constant control $w=0$. 
Thus, we can define the inverse function of $x_S(x_T)$, which we will denote by $x_T(x_{s})$, that is characterized as a solution of \eqref{eq:def_xs_xt}, that is
\begin{equation*}
\lambda_{\inf}(x_{s};x_T(x_{s}))=C,
\end{equation*}
where the previous formula makes sense for $x_s\in(\rzeromenos,\xssup)$. From the definition of $\lambda_{\inf}(\cdot)$ in \eqref{eq:def_lambda_inf}, we obtain the explicit formula 
\begin{equation*}\label{eq:xt_xs_body}
x_T(x_{s})=C(\mu_I+f(x_{s}))+x_{s} = -AC(x_{s}-\rzeromenos)(x_{s}-\rzeromas)+x_{s},\quad x_s\in(\rzeromenos,\xssup).
\end{equation*}
\smallskip

We define $t_S(x_s)$ the instant of last switch of an extremal trajectory that passes through the point $(x_s,t_S(x_s))$ in the $(x,t)-$state space, as a function of the state of last switch $x_s$. This function has the explicit formula 
\begin{equation}\label{eq:ts3}
t_S(x_s) = T-\frac{1}{\sqrt{\Delta}}\log\left| \frac{1-AC(x_s-\rzeromas)}{1-AC(x_s-\rzeromenos)} \right| ,
\end{equation}
(see Appendix \ref{app:solutionLogistic}). Regarding the behavior of $t_S(\cdot)$, we have the following property:

\begin{lemma}\label{lemma:t_s}
The function $t_S(\cdot)$ is well defined on $(\rzeromenos,\xssup) \subseteq (-\infty,2\xbar-\rzeromas)$,
is strictly decreasing, concave, and 
\begin{enumerate}
\item If $\xbar\leq\rzeromas$, then $\lim_{x_s\nearrow \xssup}t_S(x_s)=-\infty$;
\item If $\xbar>\rzeromas$, then $\lim_{x_s\nearrow \xssup}t_S(x_s)>-\infty$.
\end{enumerate}
\end{lemma}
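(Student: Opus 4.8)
The plan is to analyze the explicit formula \eqref{eq:ts3} directly, since $t_S(\cdot)$ is given in closed form. First I would verify the domain claim: from Corollary \ref{lemma:xs_xhash}, in all parametric configurations $\xssup \leq 2\xbar-\rzeromas$ (with equality only when $\xbar<\rbar$, where $\xssup = x_S(\rzeromas) = 2\xbar-\rzeromas$), so $(\rzeromenos,\xssup) \subseteq (-\infty, 2\xbar-\rzeromas)$. The point of recording this upper bound is that $2\xbar-\rzeromas$ is precisely where the quantity $1-AC(x_s-\rzeromas)$ inside the logarithm vanishes: indeed $1-AC(x_s-\rzeromas)=0 \iff x_s = \rzeromas + \frac{1}{AC} = 2\xbar-\rzeromas$, using $\xbar = \frac{1+BC}{2AC}$ and $A(\rzeromas)^2 = B\rzeromas+\mu_I$. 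Similarly, $1-AC(x_s-\rzeromenos)$ never vanishes on the relevant range because $\rzeromenos<0$ forces $x_s-\rzeromenos>0$ hence $1-AC(x_s-\rzeromenos)<1$, and one checks it stays positive on $(\rzeromenos,\xssup)$. So on the interior of the domain both arguments of the absolute value are of constant sign (positive), the logarithm is well-defined and smooth, and we may drop the absolute value bars for the monotonicity/concavity computation.

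Next I would compute derivatives. Writing $t_S(x_s) = T - \frac{1}{\sqrt{\Delta}}\big[\log(1-AC(x_s-\rzeromas)) - \log(1-AC(x_s-\rzeromenos))\big]$, differentiation gives
\begin{equation*}
t_S'(x_s) = -\frac{1}{\sqrt{\Delta}}\left[\frac{-AC}{1-AC(x_s-\rzeromas)} - \frac{-AC}{1-AC(x_s-\rzeromenos)}\right] = \frac{AC}{\sqrt{\Delta}}\left[\frac{1}{1-AC(x_s-\rzeromas)} - \frac{1}{1-AC(x_s-\rzeromenos)}\right].
\end{equation*}
Putting over a common denominator, the bracket equals $\frac{AC(\rzeromas - \rzeromenos)}{(1-AC(x_s-\rzeromas))(1-AC(x_s-\rzeromenos))}$, and since $\rzeromas-\rzeromenos = \sqrt{\Delta}/A>0$ while on the domain $1-AC(x_s-\rzeromas)>0$ and $1-AC(x_s-\rzeromenos)>0$, the whole expression is strictly positive — wait, that would make $t_S$ increasing. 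Here I need to be careful with the sign of $1-AC(x_s-\rzeromas)$: on $(\rzeromenos,\xssup)$ with $\xssup \leq 2\xbar-\rzeromas$ we have $x_s < 2\xbar-\rzeromas = \rzeromas+\frac{1}{AC}$, so $AC(x_s-\rzeromas) < 1$, i.e. $1-AC(x_s-\rzeromas)>0$; but for $x_s$ near $\rzeromenos$, $x_s-\rzeromas<0$ so $1-AC(x_s-\rzeromas)>1>0$, consistent. The resolution is that one of the two terms in the original bracket dominates with the correct sign once we track that $\frac{1}{1-AC(x_s-\rzeromas)} > \frac{1}{1-AC(x_s-\rzeromenos)}$ iff $1-AC(x_s-\rzeromas) < 1-AC(x_s-\rzeromenos)$ iff $x_s-\rzeromas > x_s - \rzeromenos$ iff $\rzeromenos > \rzeromas$, which is false. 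Hence $\frac{1}{1-AC(x_s-\rzeromas)} < \frac{1}{1-AC(x_s-\rzeromenos)}$, so $t_S'(x_s)<0$: strictly decreasing, as claimed. For concavity I would differentiate once more: $t_S''(x_s) = \frac{(AC)^2}{\sqrt{\Delta}}\left[\frac{1}{(1-AC(x_s-\rzeromas))^2} - \frac{1}{(1-AC(x_s-\rzeromenos))^2}\right]$, wait, sign: $\frac{d}{dx_s}\frac{1}{1-AC(x_s-r)} = \frac{AC}{(1-AC(x_s-r))^2}$, so $t_S''(x_s) = \frac{(AC)^2}{\sqrt\Delta}\left[\frac{1}{(1-AC(x_s-\rzeromas))^2} - \frac{1}{(1-AC(x_s-\rzeromenos))^2}\right]$. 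Since $0 < 1-AC(x_s-\rzeromas) < 1-AC(x_s-\rzeromenos)$ on the domain, the first squared denominator is smaller, so the first fraction is larger, making the bracket positive — that gives $t_S''>0$, convex, not concave. So in fact I would reexamine: the correct sign bookkeeping (the leading $-1/\sqrt\Delta$ was already absorbed into $t_S'$) shows $t_S$ is decreasing and \emph{concave} precisely when the bracket for $t_S''$ is negative; I will recheck that $1-AC(x_s-\rzeromas)$ can exceed $1-AC(x_s-\rzeromenos)$ in absolute value because it is the one approaching $0^+$, which makes its reciprocal \emph{large} and positive, forcing $t_S''>0$ — unless the statement intends concavity of $x_s \mapsto t_S$ viewed with the opposite orientation, or unless $\xssup$ is strictly less than $2\xbar-\rzeromas$ in the cases that matter. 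The safe route is: compute $t_S''$ exactly, express it as $\frac{(AC)^2}{\sqrt\Delta}\cdot\frac{(1-AC(x_s-\rzeromenos))^2 - (1-AC(x_s-\rzeromas))^2}{[(1-AC(x_s-\rzeromas))(1-AC(x_s-\rzeromenos))]^2}$ — and this numerator factors as $[(2 - AC(x_s-\rzeromenos) - AC(x_s-\rzeromas))]\cdot[AC(\rzeromas-\rzeromenos)]$, which has a definite sign once we locate $x_s$ relative to $\bar x_C$; I expect this to come out negative on the operative subinterval, giving concavity.

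Finally, for the limits, I evaluate \eqref{eq:ts3} at $x_s \nearrow \xssup$. In case $\xbar \leq \rzeromas$, Corollary \ref{lemma:xs_xhash} gives $\xssup = \xbar$ when $\rzeromas \leq \xbar$ (so $\xbar = \rzeromas$ on the boundary) and, more to the point, combined with the $\xbar < \rbar < \rzeromas$ subcase where $\xssup = 2\xbar - \rzeromas$, the argument $1-AC(x_s-\rzeromas) \to 1 - AC(2\xbar-\rzeromas-\rzeromas) = 1 - AC\cdot\frac{1}{AC} \cdot(\ldots)$ — precisely $\to 0^+$, so $\log|\cdot| \to -\infty$, and since the $1/\sqrt\Delta$ prefactor is positive, $t_S(x_s) = T - \frac{1}{\sqrt\Delta}\log|\text{something}\to 0| \to +\infty$?? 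That contradicts "$\to -\infty$"; I must recheck the sign once more — the ratio is $\frac{1-AC(x_s-\rzeromas)}{1-AC(x_s-\rzeromenos)}$ with numerator $\to 0^+$ and denominator bounded away from $0$, so the ratio $\to 0^+$, $\log \to -\infty$, $-\frac{1}{\sqrt\Delta}\log \to +\infty$. So the statement as I read it would say $+\infty$. Since the lemma asserts $-\infty$, the resolution must be that $\xssup$ in the relevant configuration makes the \emph{denominator} $1-AC(x_s-\rzeromenos) \to 0$, or that $2\xbar - \rzeromas$ is approached from the side where $1-AC(x_s - \rzeromas) < 0$; I would carefully recheck which of $\rzeromenos, \rzeromas$ is the "active" singularity using the signs established in Lemma \ref{lemma:lambda_inf}. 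The cleanest presentation: substitute the explicit $\xssup$ from Corollary \ref{lemma:xs_xhash} case by case into \eqref{eq:ts3} and take the limit, using $\rzeromas - \rzeromenos = \sqrt\Delta/A$ and $\xbar = \frac{1+BC}{2AC}$ to simplify; the $-\infty$ in case (1) arises from a vanishing argument inside $\log|\cdot|$ with a sign that makes $-\frac{1}{\sqrt\Delta}\log|\cdot| \to -\infty$, while in case (2) we have $\xbar > \rzeromas$ so $\xssup = \xbar$ stays strictly inside the interval where both arguments of $\log|\cdot|$ are bounded and nonzero, giving a finite limit. The main obstacle, clearly, is getting every sign right in these reciprocal differences and in the $\log$ of a ratio near its singularity; I would organize the whole proof around the single inequality chain $0 < 1 - AC(x_s-\rzeromas) \text{ vs } 1 - AC(x_s-\rzeromenos)$ and reference Corollary \ref{lemma:xs_xhash} to pin down exactly how close $\xssup$ sits to the singularity $2\xbar - \rzeromas$ in each case.
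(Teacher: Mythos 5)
Your overall strategy (direct sign analysis of the closed form \eqref{eq:ts3}) is the same as the paper's, and the domain claim and the monotonicity conclusion do come out correctly. But two of the four assertions of the lemma --- concavity, and $\lim_{x_s\nearrow\xssup}t_S(x_s)=-\infty$ when $\xbar\leq\rzeromas$ --- are not actually established in your write-up: in both places your own sign bookkeeping leads you to the \emph{opposite} conclusion (convexity; limit $+\infty$) and you end by saying you ``expect'' or ``would recheck'' the right answer. The single root cause is a repeated confusion about which of the two linear factors is the small one. Set $u:=1-AC(x_s-\rzeromas)$ and $v:=1-AC(x_s-\rzeromenos)$. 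Since $\rzeromas>\rzeromenos$, on the domain one has $u>v>0$; you state this correctly when proving $t_S'<0$, but two sentences later, for concavity, you write $0<u<v$, which is backwards. With the correct ordering, $\tfrac{1}{u^2}-\tfrac{1}{v^2}<0$ and your own formula gives $t_S''<0$ directly. (Your factorization of $v^2-u^2$ also carries a sign slip: $v-u=AC(\rzeromenos-\rzeromas)<0$, not $AC(\rzeromas-\rzeromenos)$.) Likewise, the right endpoint of the domain is $2\xbar-\rzeromas=\tfrac{1}{AC}+\rzeromenos$ (because $\rzeromas+\rzeromenos=B/A$), so the factor that vanishes there is the \emph{denominator} $v$, not the numerator $u$; indeed $u\to C\sqrt{\Delta}>0$ while $v\to0^{+}$, hence $u/v\to+\infty$, $\log\to+\infty$, and $t_S=T-\tfrac{1}{\sqrt{\Delta}}\log(u/v)\to-\infty$ as claimed. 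Your computation ``$1-AC(2\xbar-2\rzeromas)=1-AC\cdot\tfrac{1}{AC}$'' implicitly uses $2\xbar-2\rzeromas=\tfrac{1}{AC}$, which is false; the correct identity is $2\xbar-\rzeromas-\rzeromenos=\tfrac{1}{AC}$.

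For comparison, the paper avoids these reciprocal-difference traps by combining everything into a single fraction first: it writes $t_S'(x_s)=\tfrac{-AC^2}{uv}$, whose negativity on $\{u>0,\,v>0\}$ is immediate, and then differentiates that expression to get $t_S''(x_s)=\tfrac{-2A^2C^3}{(uv)^2}\bigl(1-ACx_s+\tfrac{BC}{2}\bigr)$, so concavity reduces to checking the sign of one linear factor on the domain (it is positive there, since $\tfrac{1}{AC}+\rzeromenos<\tfrac{1}{AC}+\tfrac{B}{2A}$). If you reorganize your argument around the ordered pair $u>v>0$ and the identity $2\xbar-\rzeromas=\tfrac{1}{AC}+\rzeromenos$, your proof closes; as written, the concavity and the case-1 limit remain genuine gaps. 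One further minor imprecision: $\xssup=2\xbar-\rzeromas$ holds for all configurations with $\xbar<\rzeromas$ (both $\xbar<\rbar$ and $\rbar<\xbar<\rzeromas$), not only for $\xbar<\rbar$.
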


\begin{proof}
The argument of $\log|\,\cdot\,|$ in \eqref{eq:ts3} is nonnegative if and only if $x_s\in(-\infty,\frac{1}{AC}+\rzeromenos)\cup(\frac{1}{AC}+\rzeromas,\infty)$. Now, it is not difficult to check that 
\begin{equation*}
2\xbar - \rzeromas = \frac{1}{AC}+\rzeromenos,
\end{equation*}
which proves that $t_S(\cdot)$ is well defined in $(-\infty,2\xbar-\rzeromas)$. Notice that from Corollary \ref{lemma:xs_xhash} and Lemma \ref{lemma:xs_xt} (see \eqref{eq:xs_xhash}), one has $\xssup \le 2\xbar-\rzeromas$, hence the function $t_S(\cdot)$ is well defined for $x_s\in(\rzeromenos,\xssup)$.

Now, the derivative of $t_S(\cdot)$ is 
\begin{equation*}\label{eq:derivada_tS}
t_S'(x_s) = \frac{-AC^2}{(1-AC(x_s-\rzeromas))(1-AC(x_s-\rzeromenos))},
\end{equation*} 
which is negative for $x_s<\frac{1}{AC}+\rzeromenos$. 
%
Now, we notice that $\xssup\leq\frac{1}{AC}+\rzeromenos$. Indeed, from Lemma \ref{lemma:xs_xhash}:
\begin{enumerate}\itemsep0em
\item If $\xbar\leq\rzeromas$, notice that $\xssup=x_S(\rzeromas)=2\xbar-\rzeromas = \frac{1}{AC}+\rzeromenos$. Thus, $t_S(x_s)\searrow-\infty$ if $x_s\nearrow \xssup$.

\item If $\xbar>\rzeromas$, notice that $\xssup=\xbar\leq \frac{1}{AC}+\rzeromenos$. Hence, 
\begin{equation*}
\lim_{x_s\nearrow \xssup}t_S(x_s) = T-\frac{1}{\sqrt{\Delta}}\log\left| \frac{1+C\sqrt{\Delta}}{1-C\sqrt{\Delta}} \right|,
\end{equation*}
which is finite, since $\rzeromas<\xbar$ implies $C\sqrt{\Delta}<1$ (Remark \ref{rem:params_y_rs}).
\end{enumerate}

The second derivative of $t_S(\cdot)$ is
\begin{equation}\label{eq:derivadas_t_s}
t_S''(x_s) = \frac{-2A^2C^3}{(1-AC(x_s-\rzeromas))^2(1-AC(x_s-\rzeromenos))^2}\left( 1-ACx_s+\frac{BC}{2} \right).
\end{equation} 
which is negative if and only if $x_s\in(-\infty,\frac{1}{AC}+\frac{B}{2A})\cap{\rm{dom}}(t_S)$. Since $2\xbar-\rzeromas=\frac{1}{AC}+\rzeromenos<\frac{1}{AC}+\frac{B}{2A}$, then $(-\infty,\frac{1}{AC}+\frac{B}{2A})\cap{\rm{dom}}(t_S)=(-\infty,2\xbar-\rzeromas)$, concluding that $t_S(\cdot)$ is concave on $(-\infty,2\xbar-\rzeromas)$.
\end{proof}

The function $t_S(\cdot)$ is defined on the interval $(-\infty,2\xbar-\rzeromas)$. However, as a switching time, it makes sense only for $x_s\in(\rzeromenos,\xssup)\subseteq(-\infty,2\xbar-\rzeromas)$. Let us define the curves on the $(x,t)-$state space given by:
\begin{equation}\label{eq:set-last-switch}
S:=\{(x_s,t_S(x_s))\,|\,x_s\in(\rzeromenos,\xssup)\},\quad S^{\star}:=\left\{(x_s,t_S(x_s))\,|\,x_s\in\left(\rzeromenos,2\xbar-\rzeromas\right)\right\},
\end{equation}
corresponding to the curve of last switch $S$, and the entire graph of $t_S(\cdot)$. 

Associated to the above curves, let us define the sets lying below these curves, given by
\begin{equation}\label{eq:Theta}
\begin{split}
\Theta :=&\, \left\{ (x,t)\in(\rzeromenos,\xssup)\times[0,T]\,|\, t<t_S(x) \right\},\\ 
\Theta^{\star} :=&\, \left\{ (x,t)\in\left(\rzeromenos,2\xbar-\rzeromas\right)\times[0,T]\,|\, t<t_S(x) \right\} .
\end{split}
\end{equation}

Notice that $S=S^{\star}$ and $\Theta=\Theta^{\star}$ if $\xbar<\rzeromas$, as in such case we have $\xssup=2\xbar-\rzeromas$.

\subsubsection{First switch}

When there exists a second switch, we denote by $x_{\sigma}$ the state of first switch. From Remark \ref{rem:switches}, we know that the state of first switch $x_{\sigma}$ and the state of last switch $x_s$ satisfy
\begin{equation*}
x_{\sigma} = 2\xbar - x_s.
\end{equation*}

Let us define the time of first switch $t_{\sigma}(x_{\sigma})$ (depending on the state of first switch) 
\begin{equation}\label{eq:t_sigma}
t_{\sigma}(x_{\sigma})  =  t_S(2\xbar-x_{\sigma})-\frac{1}{B}\log\left| \frac{(2\xbar-x_{\sigma})\left(x_{\sigma}-\rmumas\right)}{\left(2\xbar-x_{\sigma}-\rmumas\right)x_{\sigma}} \right|,
\end{equation}
for the possible switching points $x_{\sigma}\in(2\xbar-\xssup,2\xbar-\rmumas)$, since  trajectories whose point of last switch belong to $(\rzeromenos,\rmumas)$ cannot have another switch (see Appendix \ref{app:solutionLogistic}). As it was done for $t_S(\cdot)$, from $t_{\sigma}(\cdot)$ we define the curve of first switch $\sigma$ by:
\begin{equation}\label{eq:Sigma}
\sigma:=\{(x_{\sigma},t_{\sigma}(x_{\sigma}))\,|\,x_{\sigma}\in(2\xbar-\xssup,2\xbar-\rmumas)\},
\end{equation}
which, if $\rzeromas<\xbar$, can be extended to the point $(\xbar,t_S(\xbar))$, since in that case $\xssup=\xbar$ (Lemma \ref{lemma:xs_xhash}) and $t_S(\cdot)$ is well defined at $\xbar$ (Lemma \ref{lemma:t_s}). Let us define $\Xi$ as the set on the  $(x,t)-$state space that lies below the curve $\sigma$, that is,
\begin{equation}\label{eq:Xi}
\Xi := \left\{ (x,t)\in(2\xbar-\xssup,2\xbar-\rmumas)\times[0,T]\,|\, t<t_{\sigma}(x) \right\},
\end{equation}
($\Xi$ can also be extended in the case $\rzeromas<\xbar$).\\

\begin{remark}
According to Lemma \ref{lemma:xs_xhash}, the left point of the interval of definition of $t_{\sigma}(\cdot)$, given by $2\xbar-\xssup$, depends of the parametric configuration:
\begin{itemize}
\item $\rbar<\xbar<\rzeromas$ implies $2\xbar-\xssup=2\xbar-x_S(\rzeromas)=2\xbar-(2\xbar-\rzeromas)=\rzeromas$; 
\item $\rzeromas\leq\xbar$ implies $2\xbar-\xssup=2\xbar-\xbar=\xbar$.
\end{itemize} 

Notice that $t_{\sigma}(\cdot)$ has asymptotes at $\rzeromas$ and $2\xbar-\rmumas$. Indeed, the denominator of the argument of $\log|\cdot|$ in \eqref{eq:t_sigma} is null at $x_{\sigma}\in\{0,2\xbar-\rmumas\}$ (where $0$ is outside of the domain of interest) and, since $t_S(\cdot)$ has a vertical asymptote at $x_s=2\xbar-\rzeromas$, then the function $x_{\sigma}\mapsto t_S(2\xbar-x_{\sigma})$ has a vertical asymptote at $x_{\sigma}=\rzeromas$. 
\end{remark}		

The curves of first and last switch intersect if and only if $\rzeromas<\xbar$, and the intersection point is $x=\xssup=\xbar$. In such case, we remark the existence of an extremal trajectory $x\mapsto(x,t_{\Gamma}(x))$ in the $(x,t)-$state space  (parametrized by $x$ instead of $t$), generated by the constant control $w=0$, that passes through the intersection point $(\xbar,t_S(\xbar))$. This curve corresponds to the feasible curve $(x^{\Gamma}(\cdot),\lambda^{\Gamma}(\cdot))$, in the $(x,\lambda)-$state space, that passes through the point $(\xssup,C)=(\xbar,C)$, that is, it satisfies $\lambda_{\inf}(\xssup;x_T(\xssup))=C$. Since $x^{\Gamma}(\cdot)$ is solution of \eqref{eq:SIS-logistic-2}, it satisfies $x^{\Gamma}(t)>\rzeromas$ for all $t\geq0$, and $x^{\Gamma}(t)\rightarrow\rzeromas$ as $t\rightarrow\infty$. This function $t_{\Gamma}(\cdot)$ can be obtained replacing $(x_0,t_0)=(\xbar,t_S(\xbar))$ in \eqref{eq:tiempo_logistica}, obtaining
\begin{equation*}\label{eq:t_kink}
t_{\Gamma}(x)= t_S(\xbar) +\frac{1}{\sqrt{\Delta}}\log\left| \frac{(x-r_0^{-})(\xbar-r_0^{+}) }{(x-r_0^{+})(\xbar-r_0^{-})} \right|.
\end{equation*}

Let us define, in the $(x,t)-$state space, the above described curve $\Gamma$, that will be part of a limit region of the optimal synthesis
\begin{equation*}
\Gamma:=\{(x,t_{\Gamma}(x))\,|\,x\in(\rzeromas,1]\},
\end{equation*}
and the set $\Upsilon$ as the set on the $(x,t)-$state space that lies below the curve $\Gamma$, that is,
\begin{equation}\label{eq:Upsilon}
\Upsilon := \{(x,t)\in (\rzeromas,1]\times[0,T] \,|\, t<t_{\Gamma}(x)\}.
\end{equation}

Regarding the set $S$ and the curve $\Gamma$, we have the following result:

\begin{lemma}
Suppose $\rzeromas<\xbar$. Then, there is only one intersection point between the set of last switch  $S$ (in the $(x,t)$ space), given by \eqref{eq:set-last-switch}, and $\Gamma$, at $x=\xssup=\xbar$. Moreover, $S$ and $\Gamma$ intersect tangentially.
\end{lemma}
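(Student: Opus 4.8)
The plan is to treat both curves as graphs of functions of the state $x$ — namely $x\mapsto t_S(x)$ from \eqref{eq:ts3} and $x\mapsto t_\Gamma(x)$ from its displayed formula — and to study their difference $h(x):=t_S(x)-t_\Gamma(x)$ on the interval where both are defined. Under $\rzeromas<\xbar$ we have $\xssup=\xbar$ by Corollary \ref{lemma:xs_xhash}, so $S$ is parametrized by $x\in(\rzeromenos,\xbar)$ while $\Gamma$ is parametrized by $x\in(\rzeromas,1]$; hence the $x$-coordinate of any common point must lie in $(\rzeromas,\xbar]$, and there $t_S$ is well defined (its domain extends up to $2\xbar-\rzeromas>\xbar$ by Lemma \ref{lemma:t_s}) and smooth, as are the two logarithms, once one checks their arguments stay positive on $(\rzeromas,\xbar]$. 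The first, immediate, observation is that $h(\xbar)=0$: at $x=\xbar$ the logarithmic term in $t_\Gamma$ is $\log 1=0$, so $t_\Gamma(\xbar)=t_S(\xbar)$, which is exactly the point at which $\Gamma$ was glued to $S$.

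The heart of the proof is to compute $h'$ and determine its sign. Differentiating $t_S$ reproduces the expression already recorded in the proof of Lemma \ref{lemma:t_s}, and differentiating $t_\Gamma$ directly — using $\rzeromas-\rzeromenos=\sqrt{\Delta}/A$ — gives $t_\Gamma'(x)=-1/[A(x-\rzeromenos)(x-\rzeromas)]$. Reducing $h'=t_S'-t_\Gamma'$ to a common denominator, the numerator collapses, after substituting $\rzeromenos+\rzeromas=B/A$ and the definition \eqref{eq:def_xbar} of $\xbar$, to $2AC(\xbar-x)$, so that
\[
h'(x)=\frac{2C(\xbar-x)}{(x-\rzeromenos)(x-\rzeromas)\,\big(1-AC(x-\rzeromas)\big)\,\big(1-AC(x-\rzeromenos)\big)}.
\]
It then suffices to sign the four factors in the denominator for $x\in(\rzeromas,\xbar)$: the factors $x-\rzeromas$ and $x-\rzeromenos$ are positive (recall $\rzeromenos<0<\rzeromas$), and each of the two linear factors is positive at the two endpoints $x=\rzeromas$ and $x=\xbar$ — this is where $\rzeromas<\xbar$, equivalently $C\sqrt{\Delta}<1$ (Remark \ref{rem:params_y_rs}), enters, since it yields $1-C\sqrt{\Delta}>0$ — hence positive on the whole interval by linearity. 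Since $\xbar-x>0$ as well, we conclude $h'>0$ on $(\rzeromas,\xbar)$.

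Combining $h'>0$ on $(\rzeromas,\xbar)$ with $h(\xbar)=0$ shows $h<0$ throughout $(\rzeromas,\xbar)$, that is $t_S(x)<t_\Gamma(x)$ there; therefore the only point $S$ and $\Gamma$ have in common is the one at $x=\xssup=\xbar$. The tangency is read off the very same formula for $h'$: its numerator $2C(\xbar-x)$ vanishes at $x=\xbar$ while the denominator stays strictly positive there (its factors having limiting values $(1\pm C\sqrt{\Delta})/(2AC)$ and $(1\pm C\sqrt{\Delta})/2$), so $h'(\xbar)=0$, i.e. $t_S'(\xbar)=t_\Gamma'(\xbar)$, and the two curves share a tangent line at the contact point. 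I expect the only mildly delicate step to be the bookkeeping for the positivity of the two linear factors near the endpoints (equivalently, confirming $\xbar<\rzeromenos+1/(AC)$ and $\xbar<2\xbar-\rzeromas$), which is precisely the point where the hypothesis $\rzeromas<\xbar$ is used; everything else is routine algebra.
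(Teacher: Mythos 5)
Your argument is correct, and it reaches the conclusion by a slightly different route than the paper. You analyze the sign of the first derivative of the difference $h=t_S-t_{\Gamma}$ on all of $(\rzeromas,\xbar)$, obtaining the closed form
\[
h'(x)=\frac{2C(\xbar-x)}{(x-\rzeromenos)(x-\rzeromas)\bigl(1-AC(x-\rzeromas)\bigr)\bigl(1-AC(x-\rzeromenos)\bigr)},
\]
whose numerator $1+BC-2ACx=2AC(\xbar-x)$ I have checked; monotonicity of $h$ together with $h(\xbar)=0$ then gives both uniqueness of the contact point and, since $h'(\xbar)=0$, tangency. The paper instead works at the level of second derivatives: it records $t_{\Gamma}''>0$ and $t_S''<0$ on the relevant interval (so $h$ is strictly concave), checks $t_S'(\xbar)=t_{\Gamma}'(\xbar)$ from \eqref{eq:derivadas_t_s}, and concludes that $\xbar$ is the unique maximizer of $h$, hence the unique (tangential) intersection. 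The two arguments are close cousins, but yours is more self-contained in one respect: the single displayed formula for $h'$ simultaneously delivers the sign of $h$ away from $\xbar$ and the equality of slopes at $\xbar$, and your explicit verification that the four denominator factors are positive on $(\rzeromas,\xbar]$ (using $\rzeromas<\xbar\iff C\sqrt{\Delta}<1$ from Remark \ref{rem:params_y_rs}, and the endpoint values $(1\pm C\sqrt{\Delta})/(2AC)$ and $(1\pm C\sqrt{\Delta})/2$) makes the domain bookkeeping more transparent than in the paper's version, which leaves those positivity checks implicit. The paper's route is shorter on the page because it reuses the already-computed concavity of $t_S$ from Lemma \ref{lemma:t_s}. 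Both are valid; no gap.
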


\begin{proof}
The derivatives of $t_{\gamma}(\cdot)$ are
\begin{equation}\label{eq:derivadas_t_k}
t_{\Gamma}'(x) = \frac{-1}{A(x-\rzeromas)(x-\rzeromenos)}\,,\quad
t_{\Gamma}''(x) = \frac{2}{A(x-\rzeromas)^2(x-\rzeromenos)^2}\left(x-\frac{B}{2A}\right).
\end{equation} 

Since we are considering $x\in[\rzeromas,\xbar]$, we have $t_{\Gamma}''(x)>0$ and $t_S''(x)<0$ (from \eqref{eq:derivadas_t_s}). Thus, $t_{\Gamma}(\cdot)$ is convex and $t_S(\cdot)$ concave. Now, replacing $x=\xbar=\frac{1+BC}{2AC}$ in \eqref{eq:derivadas_t_k} and \eqref{eq:derivadas_t_s}, we get $t_S'(\xbar) = \frac{-1}{A(\xbar-\rzeromas)(\xbar -\rzeromenos)} = t_{\Gamma}'(\xbar)$. We conclude that the intersection between $S$ and $\Gamma$ is unique and tangential, at the point $(\xbar,t_S(\xbar))$.
\end{proof}

In Figure \ref{fig:diag_fase_xt} we illustrate the behavior of the curves $S,\sigma,\Gamma$, as well as the regions $\Theta,\Xi,\Upsilon$, in the $(x,t)-$state space, for the different parametric configurations.\\

\begin{center}
\includegraphics[scale=0.33]{./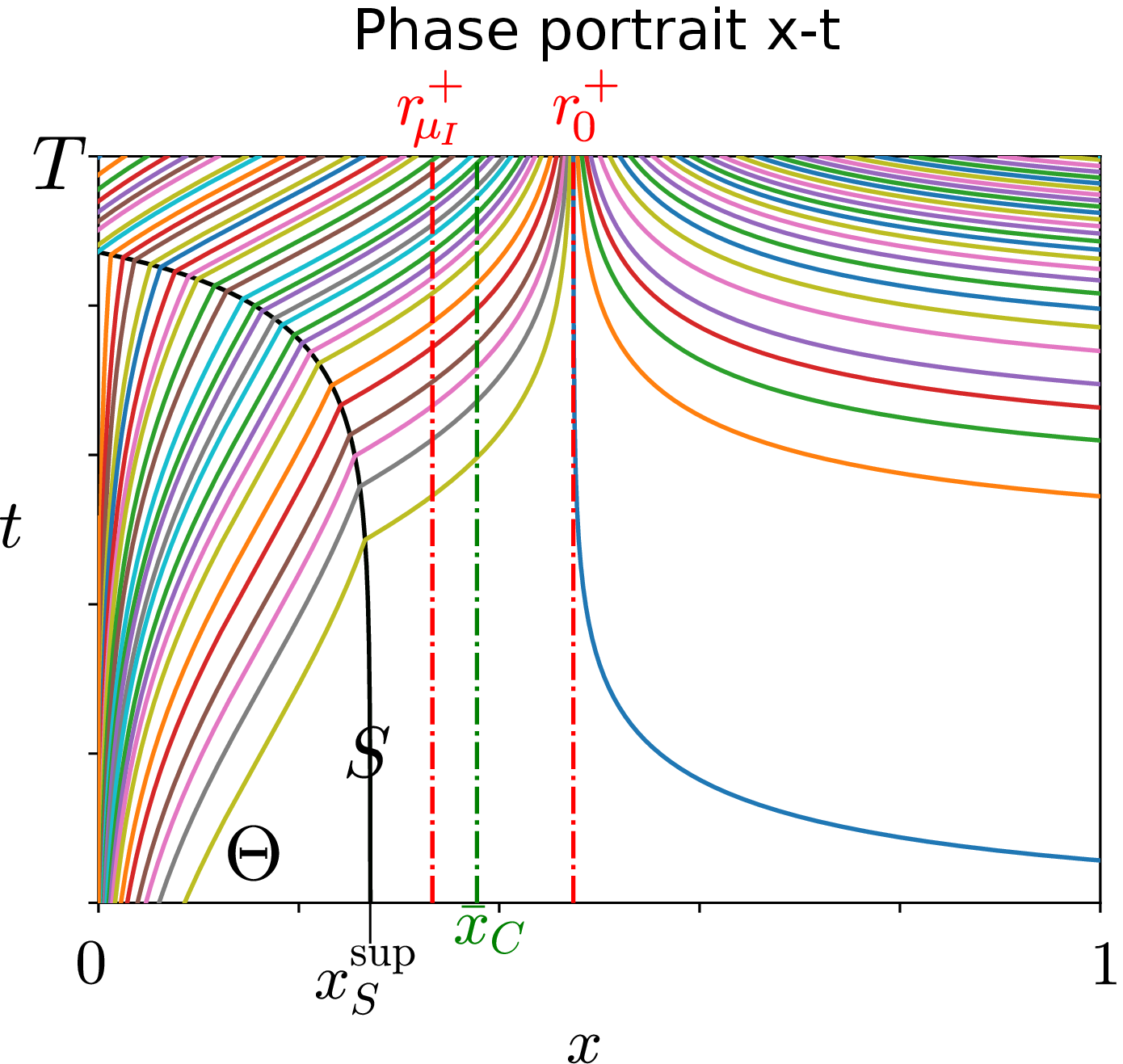}\ \
\includegraphics[scale=0.33]{./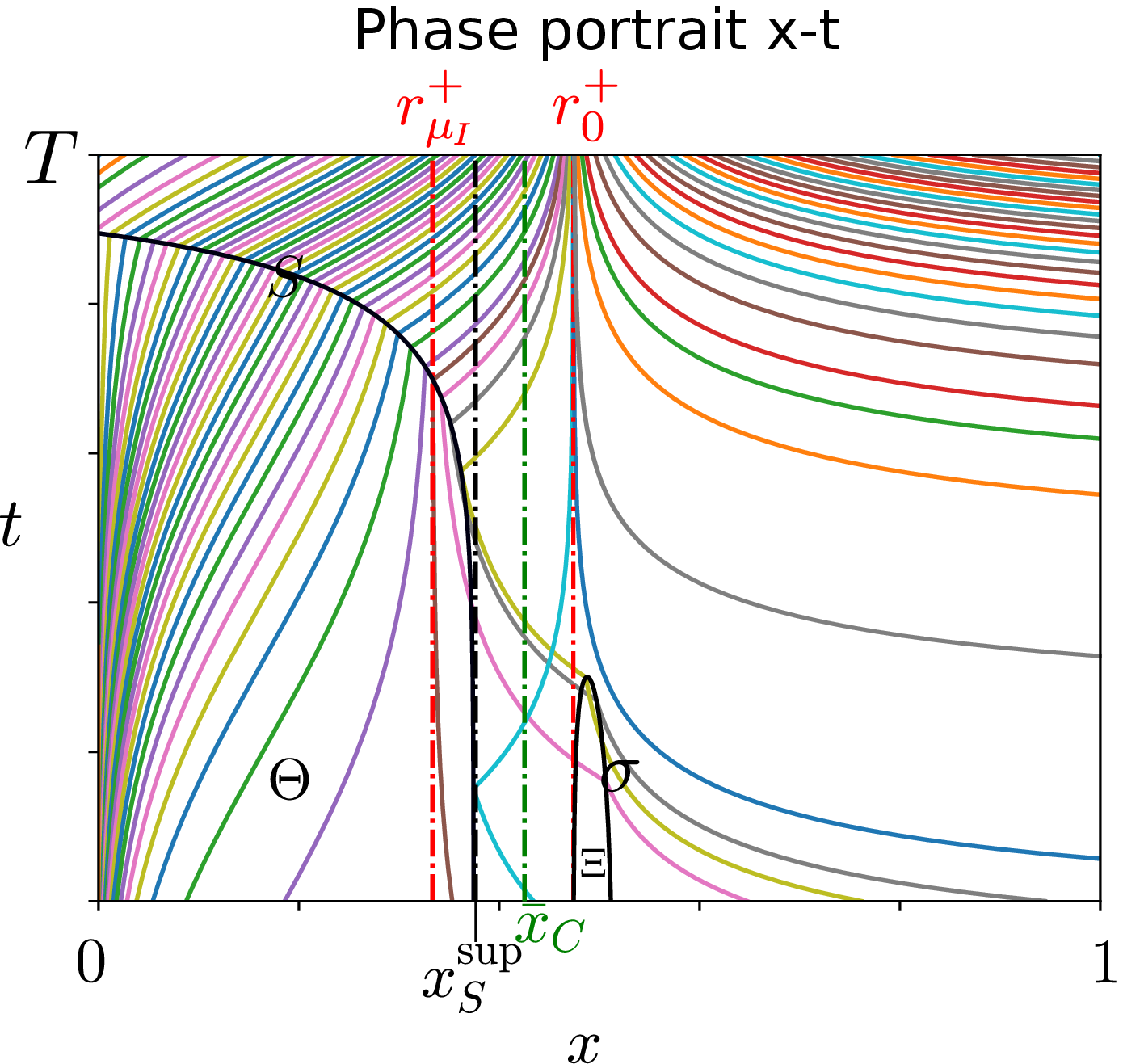}\ \
\includegraphics[scale=0.33]{./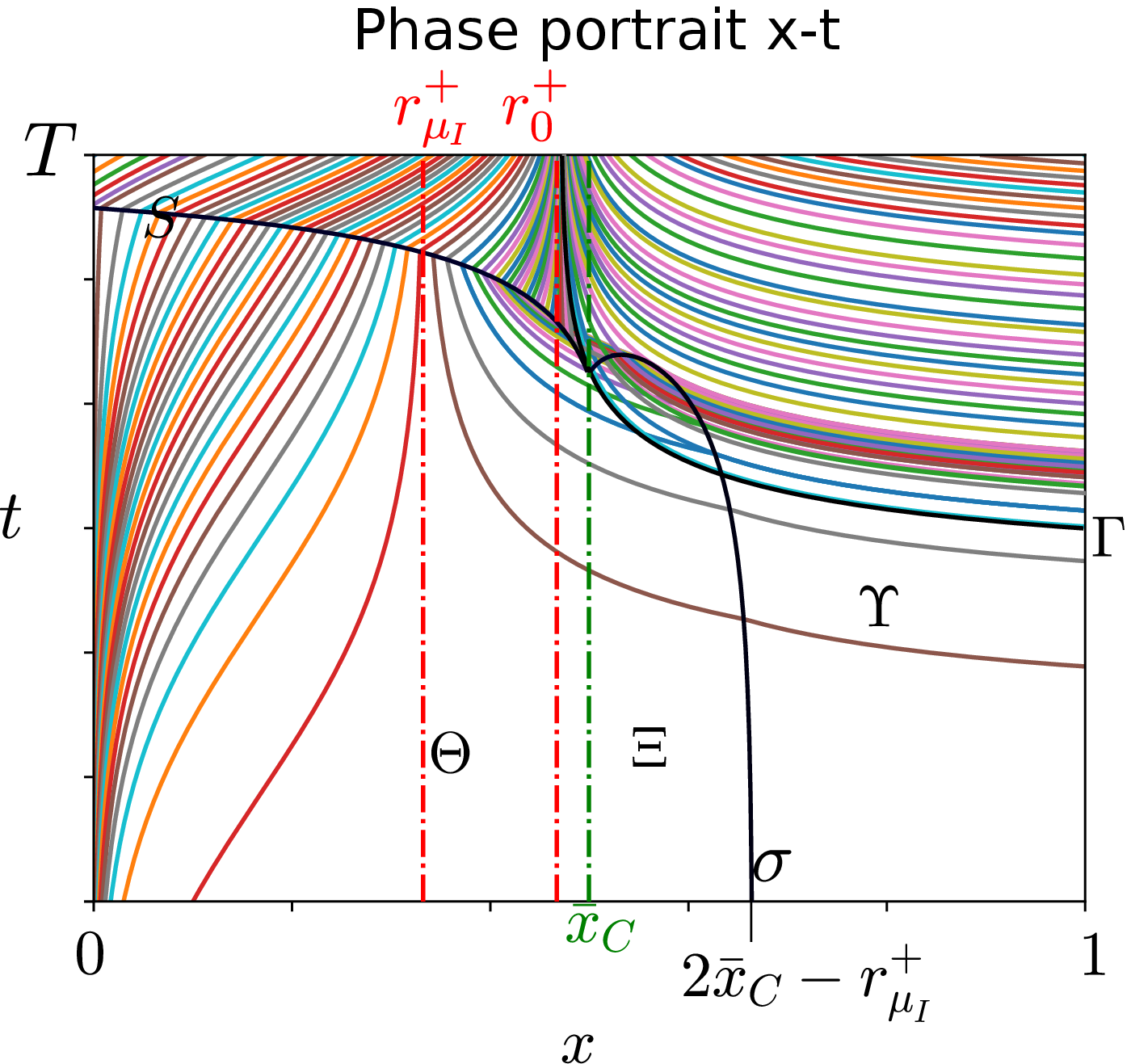}\ \
\captionof{figure}{Diagram of the switching curves and feasible curves, given a mesh of final conditions $x_T$, in the $(x,t)-$state space. Left, $\xbar<\rbar$. Center, $\rbar<\xbar<\rzeromas$. Right, $\xbar>\rzeromas$}\label{fig:diag_fase_xt}
\end{center}

\subsection{Optimal synthesis}

In this section, we characterize the optimal synthesis of the problem with respect to the initial condition and initial time, via Hamilton-Jacobi-Bellman techniques. For this, we recall a key result, found in \cite{BressanPiccoli2007}, which we adapt to the current problem in Lagrange formulation:
\begin{theorem}[\cite{BressanPiccoli2007}, Corollary 7.3.4]\label{teo:bresanpiccoli}
Consider the problem 
\begin{equation*}\label{eq:problem_bressan_piccoli}
V(y,s) = \min \left\{ \int_s^T L(x(t),w(t),t)dt\,\left|\, \substack{\displaystyle\dot x(t) = f(x(t),w(t),t) \mbox{ a.e. } t\in[s,T]\, \\[2mm] \displaystyle x(s)=y  (x(T),T)\in \mathcal T  \\[2mm] \displaystyle w:[s,T]\rightarrow U \mbox{ measurable}} \right. \right\} ,
\end{equation*}
with $U\subseteq\R^m$ compact, $f,L:\R^n\times\R^m\times\R$ continuous in all variables, and continuously differentiable with respect to $x$, and $\mathcal T=\R^n\times \{T\}$ (closed). Let $\mathcal Q=\R^n\times[0,T]$ and let $W:\bar{\mathcal Q}\rightarrow\R$ be a continuous function such that
\begin{enumerate}
\item $W(\cdot)\geq V(\cdot)$,
\item $W(\cdot)=0$ on $\mathcal T=\R^n\times\{T\}$, 
\item There exist finitely many manifolds $\mathcal M_1,\dots,\mathcal M_N\subseteq \mathcal Q$ with dimension $\leq n$ such that $W$ is continuously differentiable and satisfies the Hamilton-Jacobi equation
\begin{equation*}
\frac{\partial W}{\partial s}(y,s) + \min_{\omega\in U} \left\{ \frac{\partial W}{\partial y}(y,s) \cdot f(y,\omega,t) + L(y,\omega,s) \right\} = 0
\end{equation*}
at every point $(y,s)$ in the open set $\mathcal Q\backslash(\cup_{i=1..N} \mathcal M_i)$.
\end{enumerate}
Then, $W(\cdot)$ coincides with the value function $V(\cdot)$ on the closure of the domain $\mathcal Q$.
\end{theorem}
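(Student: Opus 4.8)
Since hypothesis (1) already furnishes the inequality $W\geq V$ on $\bar{\mathcal Q}$, the whole task reduces to proving the reverse inequality $W(y,s)\leq V(y,s)$ for every $(y,s)\in\mathcal Q$. For this I would fix $(y,s)$, pick an \emph{arbitrary} measurable control $w:[s,T]\to U$ with associated (globally defined, absolutely continuous) trajectory $x(\cdot)$, and aim at the single estimate
\[
W(y,s)\;\leq\;\int_s^T L(x(t),w(t),t)\,dt .
\]
Taking the infimum over all admissible $w$ then gives $W(y,s)\leq V(y,s)$, and combining with hypothesis (1) yields the claimed equality $W=V$ on $\bar{\mathcal Q}$. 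Note that this route does not even require a priori knowledge that an optimal control exists (although Filippov's theorem guarantees it in the present setting).

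The engine is a differentiation-along-the-trajectory argument. Set $\varphi(t):=W(x(t),t)$, which is continuous on $[s,T]$ because $W$ is continuous and $x(\cdot)$ is absolutely continuous. At any $t$ for which $(x(t),t)\notin\bigcup_i\mathcal M_i$ and $\dot x(t)$ exists, $W$ is $C^1$ near $(x(t),t)$ and the chain rule gives $\dot\varphi(t)=\partial_s W(x(t),t)+\partial_y W(x(t),t)\cdot f(x(t),w(t),t)$. Evaluating the Hamilton--Jacobi equation of hypothesis (3) at $(x(t),t)$ and discarding the minimum in favour of the particular value $\omega=w(t)$ yields
\[
\partial_s W(x(t),t)+\partial_y W(x(t),t)\cdot f(x(t),w(t),t)+L(x(t),w(t),t)\;\geq\;0 ,
\]
i.e. $\dot\varphi(t)\geq -L(x(t),w(t),t)$ at a.e. such $t$. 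If this inequality can be integrated over the whole of $[s,T]$, one gets $\varphi(T)-\varphi(s)\geq-\int_s^T L\,dt$, and then hypothesis (2), which gives $\varphi(T)=W(x(T),T)=0$, together with $\varphi(s)=W(y,s)$, delivers exactly the desired bound.

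The delicate point, and the main obstacle, is legitimizing that integration in the presence of the exceptional set $E:=\{t\in[s,T]:(x(t),t)\in\bigcup_{i=1}^N\mathcal M_i\}$, where $W$ need not be differentiable; this is precisely where the hypothesis that the $\mathcal M_i$ are \emph{finitely many} manifolds of dimension $\leq n$ inside the $(n+1)$-dimensional set $\mathcal Q$ is used. On every compact subinterval of $[s,T]$ disjoint from $E$ the inequality $\dot\varphi\geq -L$ holds a.e.\ and integrates directly, so it suffices to show that the time spent on the singular manifolds contributes nothing. I would use the transversality heuristic that a generic Lipschitz curve meets a manifold of codimension $\geq 1$ only on a set of parameter values of measure zero, made rigorous by one of two routes: \emph{(i)} a perturbation/limit argument — approximate $w$ by controls whose trajectories meet each $\mathcal M_i$ in a null set of times, run the estimate for the perturbed trajectories, and pass to the limit using continuity of $W$, of $V$, and of the data; or \emph{(ii)} a regularity argument — when $W$ is moreover locally Lipschitz (as it is in the applications of interest, being glued from $C^1$ pieces across the $\mathcal M_i$), $\varphi$ is absolutely continuous, so $\varphi(T)-\varphi(s)=\int_s^T\dot\varphi(t)\,dt$ and the null set $E$ drops out. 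Either way one obtains $W(y,s)\leq\int_s^T L(x(t),w(t),t)\,dt$ for all admissible $w$, hence $W\leq V$, completing the proof. As a structural alternative one may instead note that a continuous function which is a classical subsolution of the Hamilton--Jacobi equation off finitely many lower-dimensional manifolds is a viscosity subsolution on all of $\mathcal Q$, and then invoke the comparison principle against the continuous value function $V$; I would keep the elementary integration argument as the primary line and mention this only as a remark.
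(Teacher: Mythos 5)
This statement is quoted by the paper as an external result (\cite{BressanPiccoli2007}, Corollary 7.3.4) and is given no proof in the paper, so there is no in-paper argument to compare against; what you have written is the standard verification-theorem proof, which is essentially the strategy of the cited source. Your outline is correct: hypothesis (1) gives $W\geq V$, and the reverse inequality follows by differentiating $\varphi(t)=W(x(t),t)$ along an arbitrary admissible trajectory, using the Hamilton--Jacobi equation with the minimum relaxed to $\omega=w(t)$ to get $\dot\varphi\geq -L$, integrating, and invoking $\varphi(T)=0$. You also correctly isolate the only delicate point, the exceptional set $E$ of times spent on $\bigcup_i\mathcal M_i$. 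One caution there: your route \emph{(ii)} (local Lipschitz continuity of $W$, hence absolute continuity of $\varphi$) is not by itself sufficient, because absolute continuity only gives $\varphi(T)-\varphi(s)=\int_s^T\dot\varphi$, while the pointwise inequality $\dot\varphi(t)\geq -L$ is established only off $E$; if the trajectory dwells on a manifold for a set of times of positive measure, the chain-rule computation is unavailable precisely where it is needed. The argument that actually closes the gap is your route \emph{(i)}: for a fixed control the flow map in the initial datum is a homeomorphism, so for each $t$ the set of initial points whose trajectory lies on the measure-zero set $\bigcup_i\mathcal M_i$ at time $t$ is null, and Fubini gives that for almost every perturbed initial point the set $E$ is null; one then passes to the limit using continuity of $W$ and continuous dependence of trajectories and costs on the initial condition. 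With that route made primary, the proof is complete and consistent with the reference.
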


The Hamilton-Jacobi-Bellman equation associated to problem \eqref{eq:problem} is 
\begin{equation}\label{eq:HJB}
\begin{split}
\frac{\partial V}{\partial t_0} + x_0 + (\mu_I+f(x_0))\frac{\partial V}{\partial x_0} + \inf_{w\in[0,\tildemuI]}\left\{ \left(C- \frac{\partial V}{\partial x_0}\right)w \right\}  =&\, 0,\,\\[2mm]
\mbox{for }~~ (x_0,t_0)\in~&[0,1]\times[0,T),\\[2mm]
V(x_0,T) =&\, 0,\, x_0\in[0,1].
\end{split}
\end{equation}

In what follows, we construct a candidate to value function $W(\cdot)$, which will depend on the parameter configuration, and we prove that $W(\cdot)$ satisfies \eqref{eq:HJB}, concluding that $V(\cdot)=W(\cdot)$. From Section \ref{sec:pontryagin}, we know that the optimal controls are bang-bang type, with at most 2 switches. Moreover, we know that the last switch is from $w=\mu_I$ to $w=0$, and when the first switch exists, is from $w=0$ to $w=\mu_I$. Then, we consider an auxiliar cost function, with constant control $w\in[0,\mu_I]$, in a time interval $[t_i,t_f]$, for a trajectory starting at the initial condition $x_i$ at initial time $t_i$: 
\begin{equation*}\label{eq:Jw}
J^{w}(x_i,t_i,t_f) \,:=\, Cw(t_f-t_i) + \int_{t_i}^{t_f}x^{w}(t;x_i,t_i)dt,
\end{equation*}
where $x^{w}(\cdot)$ is the solution of \eqref{eq:SIS-logistic-3} with constant control $w(t)=w\in[0,\mu_I]$. Direct integration of $x^{w}(\cdot)$ leads to the formula
\begin{equation}\label{eq:Jw_expl}
J^{w}(x_i,t_i,t_f) = (r_w^{+}+Cw)(t_f-t_i) + \frac{1}{A}\log\left( \frac{ (x_i-r_w^{-}) - (x_i-r_w^{+})e^{-A(r_w^{+}-r_w^{-})(t_f-t_i)} }{r_w^{+}-r_w^{-}} \right),
\end{equation}
with $\rwmenos,\rwmas$ the steady states of the corresponding dynamics, as in \eqref{eq:raices_w} (see Appendix \ref{app:valueFunction} for details). The partial derivatives of $J^{w}(\cdot)$ are
\begin{equation}\label{eq:partialJ_cuerpo}
\left\{\quad\begin{split}
\frac{\partial J^w}{\partial x_i}(x_i,t_i,t_f) =&\, \frac{1}{A}\frac{1-e^{-A(\rwmas-\rwmenos)(t_f-t_i)}}{(x_i-\rwmenos)-(x_i-\rwmas)e^{-A(\rwmas-\rwmenos)(t_f-t_i)}},\\[2mm]
\frac{\partial J^w}{\partial t_i}(x_i,t_i,t_f) =&\, -(\rwmas+Cw)-\frac{1}{A}\frac{(x_i-\rwmas)e^{-A(\rwmas-\rwmenos)(t_f-t_i)}A(\rwmas-\rwmenos)}{(x_i-\rwmenos)-(x_i-\rwmas)e^{-A(\rwmas-\rwmenos)(t_f-t_i)}},\\[2mm]
\frac{\partial J^w}{\partial t_f}(x_i,t_i,t_f) =&\, -\frac{\partial J^w}{\partial t_i}(x_i,t_i,t_f).
\end{split}\right.
\end{equation}

It is not difficult to verify that, for any fixed final time $t_f$, $J^w(\cdot)$ satisfies the Hamilton-Jacobi equation
\begin{equation*}\label{eq:HJ}
\begin{split}
\frac{\partial J^w}{\partial t_i}(x_i,t_i,t_f) + (Cw + x_i) + (\mu_I-w+f(x_i))\frac{\partial J^w}{\partial x_i}(x_i,t_i,t_f) =&\, 0,\, ,\\[2mm] \mbox{for }~~(x_i,t_i)\in&~[0,1]\times(0,t_f),\\[2mm]
J^w(x_i,t_f,t_f) =&\, 0,\, x_i\in[0,1].
\end{split}
\end{equation*}

In particular, for the case $w=0$, if $t_i<t_f$
\begin{equation}\label{eq:HJ_0}
\frac{\partial J^0}{\partial t_i}(x_i,t_i,t_f) + x_i + (\mu_I+f(x_i))\frac{\partial J^0}{\partial x_i}(x_i,t_i,t_f) \,=\, 0,
\end{equation}
and when $w=\tildemuI$, 
\begin{equation*}\label{eq:HJ_mu}
\frac{\partial J^{\tildemuI}}{\partial t_i}(x_i,t_i,t_f) + (C\tildemuI + x_i) + f(x_i)\frac{\partial J^{\tildemuI}}{\partial x_i}(x_i,t_i,t_f) \,=\, 0.
\end{equation*}
\medskip

The value function must be a combination of the functionals $J^0(\cdot)$ and $J^{\tildemuI}(\cdot)$. In Section \ref{sec:switching} we have identified the regions in the $(x,t)-$state space in which a switch can occur, which are the curves $S$ (last switch, given by \eqref{eq:set-last-switch}) and $\sigma$ (first switch, given by \eqref{eq:Sigma}). Let us define 
\begin{equation}\label{eq:hat_ts_xs}
\hat t_S(x_0,t_0) = \inf\{ t>t_0\,|\, (x^{\mu_I}(t;x_0,t_0),t)\in S \},\quad
\hat x_S(x_0,t_0) = x^{\mu_I}(\hat t_S(x_0,t_0);x_0,t_0),
\end{equation}
the first time that a trajectory controlled by $w=\mu_I$, starting from the point $(x_0,t_0)$, intersects the set of last switch (on the $(x,t)-$state space) $S$, given by \eqref{eq:set-last-switch}, and $\hat x_S(x_0,t_0)$ is the corresponding switching state. Similarly, if the curve of first switch $\sigma$ exists, given by \eqref{eq:Sigma}, we define
\begin{equation}\label{eq:hat_tsigma_xsigma}
\hat t_{\sigma}(x_0,t_0) = \inf\{ t>t_0\,|\, (x^{0}(t;x_0,t_0),t)\in \sigma \}, \quad 
\hat x_{\sigma}(x_0,t_0) = x^{0}(\hat t_{\sigma}(x_0,t_0);x_0,t_0),
\end{equation}
the first time that a trajectory controlled by $w=0$, starting from the point $(x_0,t_0)$, intersects the set $\sigma$ (on the $(x,t)-$state space),  and $\hat x_{\sigma}(x_0,t_0) $ is the corresponding switching  state. 

Thus, a point $(\hat x_S(x_0,t_0),\hat t_S(x_0,t_0))$ is likely to be a switching point for the dynamics, from $w=\mu_I$ to $w=0$. Similarly, a point $(\hat x_{\sigma}(x_0,t_0),\hat t_{\sigma}(x_0,t_0))$ is likely to be a switching point for the dynamics, from $w=0$ to $w=\mu_I$.

For initial conditions $(x_0,t_0)$ such that $\hat t_S(x_0,t_0)<\infty$, consider the cost:
\begin{equation}\label{eq:WS}
W_S(x_0,t_0) = J^{\mu_I}(x_0,t_0,\hat t_S(x_0,t_0)) + J^{0}(\hat x_S(x_0,t_0),\hat t_S(x_0,t_0),T).
\end{equation}

Now, we present the two main results of the section, distinguishing the two cases $\xbar\leq\rzeromas$ and $\rzeromas<\xbar$, for which the switching curves have different behaviors. These results are established in propositions  \ref{prop:HJB_sintesis_1} and \ref{prop:HJB_sintesis_2} below. Before presenting their proofs, we introduce three technical lemmas.

\begin{proposition}\label{prop:HJB_sintesis_1}
Suppose $\xbar\leq\rzeromas$. Let us define the feedback control
\begin{equation}\label{eq:control_caso1}
\hat w(x,t) := 
\left\{\,\begin{split}
\,\tildemuI,&\quad \mbox{ if } (x,t)\in\Theta,\\ 
\,0,&\quad \mbox{ if } (x,t)\notin\Theta,\\
\end{split}\right.
\end{equation}
and its associated cost function
\begin{equation}\label{valueFunction1}
W(x_0,t_0) = \left\{\quad\begin{split}
J^{0}(x_0,t_0,T), & \,\quad (x_0,t_0)\notin\Theta,\\
W_S(x_0,t_0), & \,\quad (x_0,t_0)\in\Theta ,
\end{split}\right.
\end{equation}
where $W_S(x_0,t_0)$ is defined in \eqref{eq:WS}. Then, the function $W(\cdot)$ defined in \eqref{valueFunction1} is a viscosity solution of the Hamilton-Jacobi-Bellman equation \eqref{eq:HJB}, and therefore  $\hat w$ given by \eqref{eq:control_caso1} is an optimal feedback control.
\end{proposition}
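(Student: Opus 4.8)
The plan is to verify the three hypotheses of Theorem~\ref{teo:bresanpiccoli} for the candidate $W(\cdot)$ defined in \eqref{valueFunction1}, in the case $\xbar\le\rzeromas$. First I would establish that $W$ is continuous on $\bar{\mathcal Q}=[0,1]\times[0,T]$. Away from the curve $S$ this is clear since $J^{0}(\cdot)$ and $W_S(\cdot)$ are smooth (being compositions of the smooth maps $J^{w}(\cdot)$ from \eqref{eq:Jw_expl} with the smooth flow $x^{\mu_I}(\cdot)$ and the smooth switching time $\hat t_S(\cdot)$). The matching across $S$ is the key point: on $S$ one has $\hat t_S(x_0,t_0)=t_0$ and $\hat x_S(x_0,t_0)=x_0$, so $W_S(x_0,t_0)=J^{\mu_I}(x_0,t_0,t_0)+J^{0}(x_0,t_0,T)=J^{0}(x_0,t_0,T)$ because $J^{w}(x_i,t_i,t_i)=0$; hence the two branches of \eqref{valueFunction1} agree on $S$, giving continuity. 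Hypothesis~2, $W=0$ on $\mathcal T=[0,1]\times\{T\}$, is immediate from $J^{0}(x_0,T,T)=0$ and the fact that for $t_0$ close to $T$ the point $(x_0,t_0)\notin\Theta$ (since $t_S(\cdot)<T$ by \eqref{eq:ts3}).

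Next I would check hypothesis~3: the HJB equation \eqref{eq:HJB} holds off a finite union of lower-dimensional manifolds, for which I take $\mathcal M_1=S$ (and, when relevant, $\mathcal M_2$ = the vertical boundary lines $\{x=\rzeromenos\}$, $\{x=\xssup\}$). On the open region $\{(x_0,t_0)\notin\bar\Theta\}$, $W=J^{0}(x_0,t_0,T)$, and by \eqref{eq:HJ_0} this satisfies $\partial_{t_0}W + x_0 + (\mu_I+f(x_0))\partial_{x_0}W=0$; I then need the sign condition $\partial_{x_0}W=\partial_{x_0}J^{0}\le C$, so that the infimum in \eqref{eq:HJB} over $w\in[0,\tildemuI]$ is attained at $w=0$ and equals $0$ — this is precisely where the geometry of $S$ (as the level curve $\lambda_{\inf}=C$) enters, since outside $\Theta$ the adjoint-type quantity $\partial_{x_0}J^{0}$, which equals $\lambda_{\inf}$ along the corresponding feasible curve, stays below $C$. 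On the open region $\Theta$, $W=W_S(x_0,t_0)$ with control $w=\tildemuI$; here I differentiate \eqref{eq:WS} using the chain rule, the fact that $\hat t_S$ and $\hat x_S$ depend on $(x_0,t_0)$ only through the $w=\tildemuI$ flow, and the derivative formulas \eqref{eq:partialJ_cuerpo}. The crucial algebraic cancellation is that the terms involving $\partial\hat t_S$ and $\partial\hat x_S$ drop out: this follows because at the switching point $(\hat x_S,\hat t_S)\in S$ one has $\partial_{x}J^{0}=C$ (by \eqref{eq:def_xs_xt}, $\lambda_{\inf}(x_S;x_T)=C$) and because $J^{0}$ and $J^{\tildemuI}$ match their time-derivatives there modulo the jump $C\tildemuI$, exactly compensating the change in running cost. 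After cancellation, $W_S$ inherits the Hamilton--Jacobi identity from $J^{\tildemuI}$, namely $\partial_{t_0}W+(C\tildemuI+x_0)+f(x_0)\partial_{x_0}W=0$, and one checks $\partial_{x_0}W=\partial_{x_0}J^{\tildemuI}\ge C$ inside $\Theta$ so that the infimum in \eqref{eq:HJB} is attained at $w=\tildemuI$, reproducing \eqref{eq:HJB}.

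The main obstacle I anticipate is the differentiation of the composite cost $W_S$ across the implicitly-defined switching surface $S$ and showing the boundary terms cancel; this is exactly the step the authors flag as needing the derivative-with-respect-to-switching-point formula of \cite{victor:ode}. Concretely, writing $\hat t=\hat t_S(x_0,t_0)$, $\hat x=\hat x_S(x_0,t_0)=x^{\tildemuI}(\hat t;x_0,t_0)$, the derivative $\partial_{x_0}W_S$ contains $\partial_{x_0}J^{\tildemuI}(x_0,t_0,\hat t)$ plus cross terms $\big(\partial_{t_f}J^{\tildemuI}+\partial_{t_i}J^{0}\big)\partial_{x_0}\hat t + \partial_{x_i}J^{0}\,\partial_{x_0}\hat x$; using $\partial_{t_f}J^{\tildemuI}=-\partial_{t_i}J^{\tildemuI}$ and the Hamilton--Jacobi identity for $J^{\tildemuI}$ and $J^{0}$ at the common point $(\hat x,\hat t)$, together with $\partial_{x_i}J^{0}(\hat x,\hat t,T)=\lambda_{\inf}(\hat x;x_T(\hat x))=C$, the bracket multiplying $\partial_{x_0}\hat t$ reduces to $-(Cw\text{-jump})$-type terms that kill the $\partial_{x_0}\hat x$ contribution as well (since along $S$, $dt_S/dx_s$ relates $\partial_{x_0}\hat t$ and $\partial_{x_0}\hat x$ through $\dot x^{\tildemuI}$). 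Once this bookkeeping is done, $W$ meets all three hypotheses of Theorem~\ref{teo:bresanpiccoli}, so $W\equiv V$, and since the control $\hat w$ of \eqref{eq:control_caso1} is exactly the minimizer realizing the infimum in \eqref{eq:HJB} at each point, it is an optimal feedback; the verification of $W\ge V$ (hypothesis~1) is then automatic because $W$ is the cost of the admissible feedback trajectory generated by $\hat w$. I would also remark that the inequalities $\partial_{x_0}J^{0}\le C$ outside $\Theta$ and $\partial_{x_0}J^{\tildemuI}\ge C$ inside $\Theta$, which are what single out $w=0$ versus $w=\tildemuI$ in the infimum, are where the hypothesis $\xbar\le\rzeromas$ is used, guaranteeing that $S=S^{\star}$ and that no first-switch curve $\sigma$ interferes with the region $\Theta$.
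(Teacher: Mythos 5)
Your proposal follows essentially the same route as the paper: verify the hypotheses of Theorem \ref{teo:bresanpiccoli} region by region, use the switching-point sensitivity formulas (Lemma \ref{lemma:ders}) to handle the cross terms when differentiating $W_S$, and decide the minimizer in \eqref{eq:HJB} from the sign of $C-\frac{\partial W}{\partial x_0}$ (Lemmas \ref{lemma:dJdx0_C} and \ref{lemma:dWdx0_C}), which is exactly where $\xbar\leq\rzeromas$ enters. Two minor imprecisions: inside $\Theta$ one actually gets $\frac{\partial W_S}{\partial x_0}=\frac{1}{f(x_0)}\left(-x_0+\hat x_S+Cf(\hat x_S)\right)$, which differs from $\frac{\partial J^{\tildemuI}}{\partial x_i}(x_0,t_0,\hat t_S)=-\frac{x_0-\hat x_S}{f(x_0)}$ by the term $Cf(\hat x_S)/f(x_0)$, so your asserted equality $\partial_{x_0}W=\partial_{x_0}J^{\tildemuI}$ is false (though the inequality $\frac{\partial W}{\partial x_0}\geq C$ that you actually need is what the paper proves, via $\frac{x_0+\hat x_S}{2}<\xbar$), and the line $\{x=\rmumas\}$, on which $f(x_0)=0$ and the above expressions degenerate, must be added to your list of exceptional manifolds.
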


\begin{proposition}\label{prop:HJB_sintesis_2}
Suppose $\xbar>\rzeromas$. Let us define $\mathcal{V}:={\rm epi}(t_S(\cdot)|_{[0,\xbar]}) \cup {\rm epi}(t_{\Gamma}(\cdot)|_{[\xbar,1]})$ (where ${\rm epi}(\cdot)$ stands for the epigraph of a function), $\mathcal{S}:=\Theta\cup(\Xi\cap\Upsilon)$, $\mathcal{T}=([0,1]\times[0,T])\backslash(\mathcal{V}\cup\mathcal{S})$, with $\Theta,\Xi,\Upsilon$ as in \eqref{eq:Theta},\eqref{eq:Xi},\eqref{eq:Upsilon}, the feedback control
\begin{equation}\label{eq:control_caso2}
\hat w(x,t) := 
\left\{\,\begin{split}
\,\tildemuI,&\quad \mbox{ if } (x,t)\in\mathcal{S},\\ 
\,0,&\quad \mbox{ if } (x,t)\notin\mathcal{S},\\
\end{split}\right.
\end{equation}
and its associated cost function
\begin{equation}\label{valueFunction2}
W(x_0,t_0) = \left\{\,\begin{split}
J^{0}(x_0,t_0,T), & \,\quad (x_0,t_0)\in\mathcal{V},\\
W_S(x_0,t_0), & \,\quad (x_0,t_0)\in\mathcal{S}, \\
J^{0}(x_0,t_0,\hat t_{\sigma}(x_0,t_0)) + W_S(\hat x_{\sigma}(x_0,t_0),\hat t_{\sigma}(x_0,t_0)), & \,\quad (x_0,t_0)\in\mathcal{T}, \\
\end{split}\right.
\end{equation}
with $(\hat t_{S}(\cdot),\hat x_{S}(\cdot))$ and $(\hat t_{\sigma}(\cdot),\hat x_{\sigma}(\cdot))$ as in \eqref{eq:hat_ts_xs},\eqref{eq:hat_tsigma_xsigma}. Then, the function $W(\cdot)$ defined in \eqref{valueFunction2} is a viscosity solution of the Hamilton-Jacobi-Bellman equation \eqref{eq:HJB}, and therefore  $\hat w$ given by  \eqref{eq:control_caso2} is an optimal feedback control.
\end{proposition}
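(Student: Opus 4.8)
The plan is to verify the hypotheses of Theorem~\ref{teo:bresanpiccoli} for the candidate $W$ of \eqref{valueFunction2}. The inequality $W\ge V$ is immediate: by construction $W(x_0,t_0)$ is the cost $J$ of the admissible trajectory produced by the feedback $\hat w$ of \eqref{eq:control_caso2}, which switches at most twice (once from $\tildemuI$ to $0$ on $S$, once from $0$ to $\tildemuI$ on $\sigma$, consistently with Propositions~\ref{prop:miscelaneos_Pontryagin} and \ref{prop:switches}), and $V$ is the infimum over all admissible controls. The boundary condition $W=0$ on $\{t_0=T\}$ holds because the integrals defining $J^{0}$, $J^{\tildemuI}$ and $W_S$ collapse to empty intervals there. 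The remaining and substantive task is to exhibit finitely many lower-dimensional manifolds---the switching curves $S$, $\sigma$, $\Gamma$ of Section~\ref{sec:switching} together with the boundaries of $\Theta$, $\Xi$, $\Upsilon$---off which $W$ is $C^1$ and solves the Hamilton--Jacobi--Bellman equation \eqref{eq:HJB}; and to check that $W$ is globally continuous, so that the three smooth branches of \eqref{valueFunction2} paste together.

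On the interior of $\mathcal{V}$ one has $W=J^{0}(x_0,t_0,T)$, which already satisfies the $w=0$ reduced equation \eqref{eq:HJ_0}; hence $W$ solves \eqref{eq:HJB} as soon as the infimum over $w$ is attained at $w=0$, i.e. as soon as $\partial W/\partial x_0=\partial J^{0}/\partial x_0\le C$ on $\mathcal{V}$. We will read this off the geometry of the feasible curves: on $\mathcal{V}$ the curve carrying $(x_0,t_0)$ stays in the region $\{\lambda\le C\}$, which is exactly the content of the description of $\lambda_{\inf}(\cdot)$ in Lemma~\ref{lemma:lambda_inf} combined with the placement of $\xbar$ from Corollary~\ref{lemma:xs_xhash} and the definitions of $\Upsilon$ and $\Gamma$. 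On the interior of $\mathcal{S}$ one has $W=W_S$ as in \eqref{eq:WS}; differentiating $W_S$ with respect to $(x_0,t_0)$ through the moving switch $(\hat x_S(x_0,t_0),\hat t_S(x_0,t_0))$, all the contributions carrying $\partial\hat x_S/\partial x_0$, $\partial\hat t_S/\partial x_0$ and their $t_0$-analogues cancel, because on $S$ the switching function $\psi=C-\lambda$ of \eqref{eq:sfpsi} vanishes and the Hamiltonian is therefore continuous across the switch---this is the point at which the value-differentiation formula of \cite{victor:ode} and the three technical lemmas introduced above enter. One is then left with $\partial W/\partial x_0=\partial J^{\tildemuI}/\partial x_i$, which satisfies $\partial W/\partial x_0\ge C$ on $\mathcal{S}$ by the same feasible-curve geometry, so $w=\tildemuI$ realises the infimum and \eqref{eq:HJB} holds since $J^{\tildemuI}$ solves its own reduced Hamilton--Jacobi equation. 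The interior of $\mathcal{T}$ is treated identically with the roles reversed: $W=J^{0}(x_0,t_0,\hat t_{\sigma})+W_S(\hat x_{\sigma},\hat t_{\sigma})$, the variations of the first switch cancel because $\psi$ vanishes on $\sigma$, one gets $\partial W/\partial x_0=\partial J^{0}/\partial x_i\le C$, and $w=0$ is the minimiser.

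For the gluing we will check continuity across each interface. Across $S$: if $(x_0,t_0)\in S$ then $\hat t_S(x_0,t_0)=t_0$, so $J^{\tildemuI}(x_0,t_0,t_0)=0$ and $W_S$ reduces to $J^{0}(x_0,t_0,T)$, matching the $\mathcal{V}$-branch. Across $\sigma$: if $(x_0,t_0)\in\sigma$ then $\hat t_{\sigma}(x_0,t_0)=t_0$, so $J^{0}(x_0,t_0,t_0)=0$ and the $\mathcal{T}$-branch reduces to $W_S(x_0,t_0)$, matching the $\mathcal{S}$-branch. Across $\Gamma$ (present only when $\xbar>\rzeromas$): $\Gamma$ is itself an extremal trajectory driven by $w=0$ and passing through $(\xbar,t_S(\xbar))$, so the $\mathcal{V}$-branch $J^{0}(\cdot,\cdot,T)$ agrees along it with the branch used below, and the tangency of $S$ and $\Gamma$ at $x=\xssup=\xbar$ established above gives the $C^1$ matching at the triple junction; continuity on the edges $x_0\in\{0,1\}$ and $t_0=T$ is routine from the explicit formulas. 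With all hypotheses of Theorem~\ref{teo:bresanpiccoli} in hand we conclude $W=V$ on $[0,1]\times[0,T]$, and since $\hat w$ generates exactly the trajectories whose cost is $W$, it is an optimal feedback control.

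The hard part will be twofold. First, the envelope step for $W_S$ (and for the $\mathcal{T}$-branch): proving rigorously that its partial derivatives are obtained by differentiating only the first leg of the concatenation, with every term coming from the moving switch cancelling; this rests on the switching identity $\lambda_{\inf}(\hat x_S;x_T(\hat x_S))=C$ together with the sensitivity formula of \cite{victor:ode}, and requires care about the regularity of $\hat x_S(\cdot)$, $\hat t_S(\cdot)$ as the base point varies. Second, establishing the one-sided inequalities $\partial W/\partial x_0\le C$ on $\mathcal{V}\cup\mathcal{T}$ and $\partial W/\partial x_0\ge C$ on $\mathcal{S}$, which is exactly where the fine geometry of the feasible curves and the ordering of $\xbar$ relative to $\rmumas$, $\rbar$, $\rzeromas$ from Lemmas~\ref{lemma:lambda_inf}--\ref{lemma:xs_xt} and Corollary~\ref{lemma:xs_xhash} is indispensable; the case $\xbar>\rzeromas$ handled here is the more delicate one precisely because of the extra region $\Upsilon$ and the limit curve $\Gamma$.
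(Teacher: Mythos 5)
Your overall strategy coincides with the paper's: verify the hypotheses of Theorem~\ref{teo:bresanpiccoli} region by region, using the switching curves $S$, $\sigma$, $\Gamma$ as the exceptional manifolds, and reduce everything to the sign of $C-\partial W/\partial x_0$ in each region. The gluing observations and the treatment of $\mathcal V$ via Lemma~\ref{lemma:dJdx0_C} are essentially right. However, there is a genuine gap at the heart of the argument: your envelope step on $\mathcal S$ claims that \emph{all} contributions carrying $\partial\hat x_S/\partial x_0$ and $\partial\hat t_S/\partial x_0$ cancel, leaving $\partial W_S/\partial x_0=\partial J^{\tildemuI}/\partial x_i=-\tfrac{x_0-\hat x_S}{f(x_0)}$. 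That is not correct. The continuity of the Hamiltonian across the switch (i.e.\ $\psi=0$ on $S$) only kills the coefficient of $\partial\hat t_S/\partial x_0$; the transport of the second-leg sensitivity through the flow of the first leg, namely $\frac{\partial J^{0}}{\partial x_i}(\hat x_S,\hat t_S,T)\cdot\frac{\partial x^{\tildemuI}}{\partial x_0}\big|_{t=\hat t_S}=C\,\frac{f(\hat x_S)}{f(x_0)}$, survives. The correct formula (Lemma~\ref{lemma:dWdx0_C}) is
\begin{equation*}
\frac{\partial W_S}{\partial x_0}(x_0,t_0)=\frac{-x_0+\hat x_S+Cf(\hat x_S)}{f(x_0)},
\qquad
C-\frac{\partial W_S}{\partial x_0}=2AC\,\frac{x_0-\hat x_S}{f(x_0)}\left(\xbar-\frac{x_0+\hat x_S}{2}\right),
\end{equation*}
so the sign of $C-\partial W/\partial x_0$ is governed by the position of the \emph{midpoint} $\tfrac{x_0+\hat x_S}{2}$ relative to the anti-turnpike point $\xbar$ (together with the sign of $(x_0-\hat x_S)/f(x_0)$, which is nonpositive). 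With your simplified derivative the quantity $C-\partial W/\partial x_0$ becomes $C+\tfrac{x_0-\hat x_S}{f(x_0)}$, which is a different object whose sign is not determined by the ``feasible-curve geometry'' you invoke, so the inequality $\partial W/\partial x_0\ge C$ on $\mathcal S$ is unsupported as stated.

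The same defect recurs, amplified, on $\mathcal T$: there the paper must carry the chain rule through \emph{two} moving switches, and after the cancellations attributable to Hamiltonian continuity on $\sigma$ and on $S$, and the identity $\hat x_S+Cf(\hat x_S)=\hat x_\sigma+Cf(\hat x_\sigma)$ coming from $\lambda_{\sup}(\hat x_\sigma;\hat x_S)=C$, one obtains $\partial W/\partial x_0=-\tfrac{x_0-\hat x_\sigma-C(\mu_I+f(\hat x_\sigma))}{\mu_I+f(x_0)}$, again not equal to $\partial J^{0}/\partial x_i(x_0,t_0,\hat t_\sigma)$. The conclusion $C>\partial W/\partial x_0$ then follows from the factorization $2AC(x_0-\hat x_\sigma)\bigl(\xbar-\tfrac{\hat x_\sigma+x_0}{2}\bigr)<0$ together with $\xbar<\hat x_\sigma<x_0$ on $\mathcal T\backslash\sigma$; none of this is visible from your sketch. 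In addition, your case analysis on $\mathcal S$ needs the geometric input (split on $x_0\lessgtr\rmumas$, and for $x_0>\rmumas$ the comparison with the point of $\sigma$ on the same $w=\mu_I$ trajectory) to verify the midpoint condition $\tfrac{x_0+\hat x_S}{2}<\xbar$, and the degenerate manifold $\{x=\rmumas\}$ where $f(x_0)=0$ must be excised as an extra lower-dimensional set. So while the architecture of your proof is the right one, the key derivative computations --- which are exactly where you flag ``the hard part'' --- are carried out incorrectly, and the resulting sign conditions do not follow.
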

\medskip

An illustration of the optimal synthesis is shown in Figure \ref{fig:systhesis}.

\begin{center}
\includegraphics[scale=0.4]{./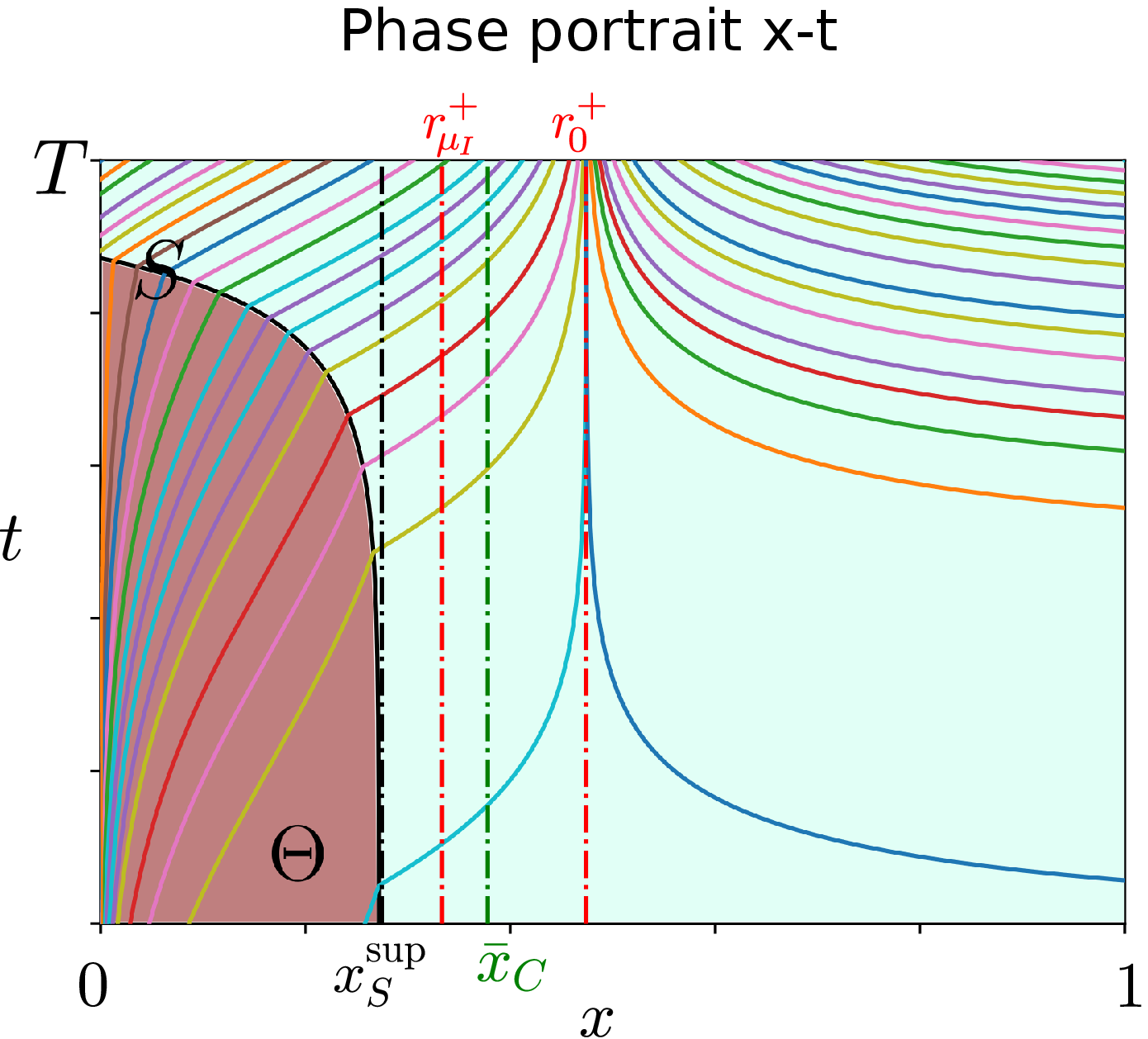}
\ \
\includegraphics[scale=0.4]{./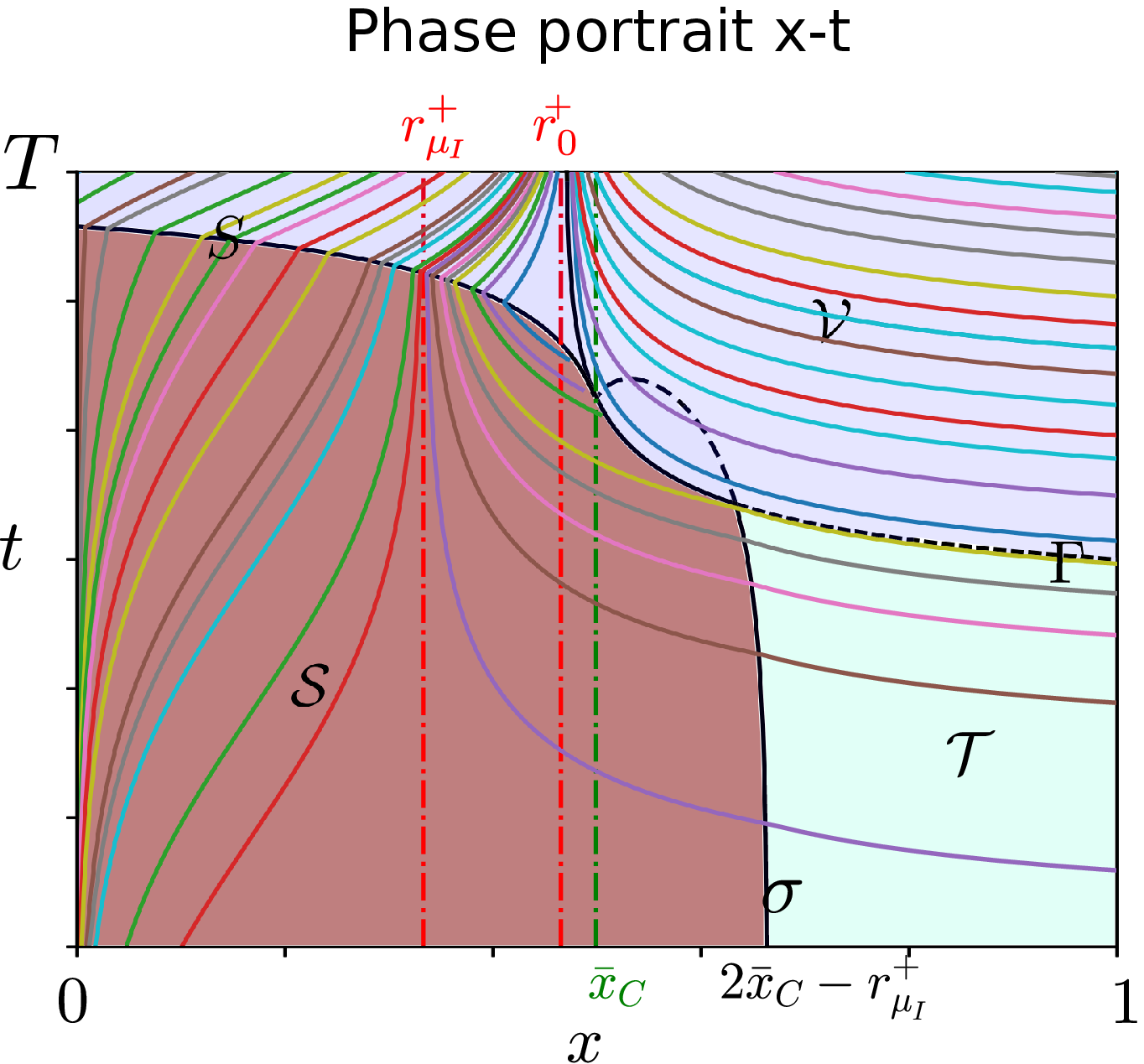}
\captionof{figure}{Optimal synthesis. Left, the case $\xbar\leq\rzeromas$. Right, $\xbar>\rzeromas$. In the dark red regions, the optimal control value is $w=\tildemuI$. In the rest, $w=0$.}\label{fig:systhesis}
\end{center}

To prove propositions \ref{prop:HJB_sintesis_1} and \ref{prop:HJB_sintesis_2}, we need the following lemmas:
\medskip

\begin{lemma}\label{lemma:ders}
\begin{enumerate}
\item Let $(x_0,t_0)$ such that $\hat t_S(x_0,t_0)<\infty$. Then,
\begin{equation}\label{eq:dxmu}
\begin{split}
\frac{\partial\hat x_S}{\partial x_0} - f(\hat x_S(x_0,t_0))\frac{\partial\hat t_S}{\partial x_0} = \left.\frac{\partial x^{\tildemuI}}{\partial x_0}(t;x_0,t_0)\right|_{t=\hat t_S(x_0,t_0)} =& \,\frac{f(\hat x_S(x_0,t_0))}{f(x_0)} \\
\frac{\partial\hat x_S}{\partial t_0} - f(\hat x_S(x_0,t_0))\frac{\partial\hat t_S}{\partial t_0} = \left.\frac{\partial x^{\tildemuI}}{\partial t_0}(t;x_0,t_0)\right|_{t=\hat t_S(x_0,t_0)} =&\, -f(\hat x_S(x_0,t_0)).
\end{split}
\end{equation} 

\item Let $(x_0,t_0)$ such that $\hat t_{\sigma}(x_0,t_0)<\infty$. Then,
\begin{equation}\label{eq:hit_state_dxsigma}
\frac{\partial \hat x_{\sigma}}{\partial x_0} - (\mu_I+f(\hat x_{\sigma}))\frac{\partial \hat t_{\sigma}}{\partial x_0} \,=\, \frac{\mu_I+f(\hat x_{\sigma}(x_0,t_0))}{\mu_I+f(x_0)} .
\end{equation}
\end{enumerate}
\end{lemma}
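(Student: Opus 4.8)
The plan is to establish the identities in Lemma~\ref{lemma:ders} by differentiating the defining relations for the hitting time and hitting state with respect to the parameters $x_0$ and $t_0$, using the semigroup (flow) structure of the solutions of \eqref{eq:SIS-logistic-3}. First I would recall that for a constant control $w$, the map $t\mapsto x^w(t;x_0,t_0)$ is the flow of the autonomous ODE $\dot x = \mu_I - w + f(x)$, so in particular $x^w(t;x_0,t_0)$ depends on $(t,t_0)$ only through $t-t_0$, and the variational equation gives $\frac{\partial}{\partial x_0}x^w(t;x_0,t_0) = \exp\!\big(\int_{t_0}^{t}(\mu_I-w+f)'(x^w(\tau))\,d\tau\big)$. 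For $w=\tildemuI$ the drift is exactly $f$, and Liouville's formula (or direct manipulation of \eqref{eq:solucion_logistica}) yields the clean closed form $\frac{\partial x^{\tildemuI}}{\partial x_0}(t;x_0,t_0) = f(x^{\tildemuI}(t;x_0,t_0))/f(x_0)$; similarly, since the flow is translation-invariant in time, $\frac{\partial x^{\tildemuI}}{\partial t_0}(t;x_0,t_0) = -\dot x^{\tildemuI} = -f(x^{\tildemuI}(t;x_0,t_0))$. These are precisely the rightmost expressions claimed in \eqref{eq:dxmu}, so the right-hand equalities are essentially the flow identities evaluated at $t = \hat t_S(x_0,t_0)$.

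The substantive content is then the left-hand equalities, which relate the \emph{total} derivatives of $\hat x_S$ and $\hat t_S$ to the \emph{partial} derivative of the flow at a fixed time. By definition \eqref{eq:hat_ts_xs}, $\hat x_S(x_0,t_0) = x^{\tildemuI}(\hat t_S(x_0,t_0);x_0,t_0)$ and the hitting time is characterized implicitly by $(x^{\tildemuI}(\hat t_S;x_0,t_0),\hat t_S)\in S$, i.e. $\hat t_S = t_S(\hat x_S)$ with $t_S(\cdot)$ as in \eqref{eq:ts3}. Differentiating $\hat x_S = x^{\tildemuI}(\hat t_S;x_0,t_0)$ with respect to $x_0$ by the chain rule gives
\begin{equation*}
\frac{\partial\hat x_S}{\partial x_0} = \dot x^{\tildemuI}(\hat t_S;x_0,t_0)\frac{\partial\hat t_S}{\partial x_0} + \frac{\partial x^{\tildemuI}}{\partial x_0}(t;x_0,t_0)\Big|_{t=\hat t_S},
\end{equation*}
and since $\dot x^{\tildemuI}(\hat t_S;\cdot) = f(\hat x_S(x_0,t_0))$ along the $w=\tildemuI$ trajectory, rearranging gives exactly
$\frac{\partial\hat x_S}{\partial x_0} - f(\hat x_S)\frac{\partial\hat t_S}{\partial x_0} = \frac{\partial x^{\tildemuI}}{\partial x_0}(t;x_0,t_0)\big|_{t=\hat t_S}$, which is the first line of \eqref{eq:dxmu}. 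The same computation with $\partial/\partial t_0$ gives the second line. Part~2 is handled identically: $\hat x_{\sigma}(x_0,t_0) = x^{0}(\hat t_{\sigma};x_0,t_0)$ with $\dot x^{0} = \mu_I + f(x)$ along the $w=0$ trajectory, so differentiating in $x_0$ and using the analogous flow identity $\frac{\partial x^0}{\partial x_0}(t;x_0,t_0)\big|_{t=\hat t_\sigma} = (\mu_I+f(\hat x_\sigma))/(\mu_I+f(x_0))$ (again from the variational equation, noting that the drift here is $g_0:=\mu_I+f$ and $g_0'=f'$, so $\partial_{x_0}x^0 = \exp(\int f'(x^0)\,d\tau) = g_0(x^0)/g_0(x_0)$) yields \eqref{eq:hit_state_dxsigma}.

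The main obstacle — really the only place care is needed — is justifying differentiability of the implicitly-defined hitting time $\hat t_S$ (resp. $\hat t_\sigma$) with respect to $(x_0,t_0)$, i.e. applying the implicit function theorem to the relation "$(x^{\tildemuI}(t;x_0,t_0),t)\in S$" at $t=\hat t_S$. For this one must check that the $w=\tildemuI$ trajectory crosses the curve $S$ transversally, so that the relevant partial derivative is nonzero; concretely, writing the crossing condition as $\hat t_S = t_S(\hat x_S)$ and using that $t_S(\cdot)$ is strictly decreasing (Lemma~\ref{lemma:t_s}) while $\dot x^{\tildemuI}>0$ below $\rmumas$ resp. has a definite sign, one gets $1 - t_S'(\hat x_S)\,f(\hat x_S)\neq 0$, which is the nonvanishing Jacobian needed. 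I would state this transversality explicitly (it also follows from the geometry in Figure~\ref{fig:diag_fase_xt}, where the $w=\tildemuI$ flow lines are vertical-ish and $S$ is a decreasing graph) and then the implicit function theorem delivers $C^1$-dependence of $\hat t_S$ and hence of $\hat x_S$, after which the chain-rule manipulations above are routine. The transversality for $\sigma$ and the $w=0$ flow is analogous, using the sign of $\mu_I+f$ on the relevant range $x_\sigma\in(2\xbar-\xssup,2\xbar-\rmumas)$.
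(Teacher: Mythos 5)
Your proof is correct and follows essentially the same route as the paper's Appendix B: establish the autonomous-flow sensitivity identities $\partial_{x_0}x^w = F(x^w(t))/F(x_0)$ and $\partial_{t_0}x^w = -F(x^w(t))$ (the paper gets the first from the transport PDE of the flow rather than Liouville's formula, but the computation is the same), then apply the chain rule to $\hat x_S(x_0,t_0)=x^{\tildemuI}(\hat t_S(x_0,t_0);x_0,t_0)$, with differentiability of the hitting time secured by the transversality condition. Your explicit discussion of the nonvanishing Jacobian $1-t_S'(\hat x_S)f(\hat x_S)$ is, if anything, slightly more careful than the paper, which simply cites the general hitting-time result.
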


\begin{proof}
See Appendix \ref{app:sensitivity}.
\end{proof}
\medskip

\begin{lemma}\label{lemma:dJdx0_C}
For sets $\Theta^{\star}$ and $S^{\star}$ defined in \eqref{eq:Theta} and \eqref{eq:set-last-switch} respectively, one has
\begin{equation*}
\begin{split}
\Theta^{\star} &= \left\{(x_i,t_i)\,|\, C<\frac{\partial J^0}{\partial x_i}(x_i,t_i,T)\right\},\\
S^{\star} &= \left\{(x_i,t_i)\,|\, C=\frac{\partial J^0}{\partial x_i}(x_i,t_i,T)\right\},\\
(\Theta^{\star})^C\backslash S^{\star} &= \left\{(x_i,t_i)\,|\, C>\frac{\partial J^0}{\partial x_i}(x_i,t_i,T)\right\}.
\end{split}
\end{equation*}
\end{lemma}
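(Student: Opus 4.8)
The plan is to reduce all three set identities to determining the sign of $C-\frac{\partial J^0}{\partial x_i}(x_i,t_i,T)$ as a function of $(x_i,t_i)$ in the physically relevant region $x_i\in[0,1]$ (so $x_i>\rzeromenos$ automatically, since $\rzeromenos<0$), and then to recognize the zero level set as the graph of $t_S(\cdot)$. The conceptual reason the claim holds is that $\frac{\partial J^0}{\partial x_i}(x_i,t_i,T)$ coincides with the adjoint value $\lambda_{\inf}\!\big(x_i;x^0(T;x_i,t_i)\big)$ of \eqref{eq:def_lambda_inf}: substituting the explicit $w=0$ solution \eqref{eq:solucion_logistica} (for which $\rwmas=\rzeromas$, $\rwmenos=\rzeromenos$, $A(\rzeromas-\rzeromenos)=\sqrt\Delta$ and $\mu_I+f(x)=-A(x-\rzeromas)(x-\rzeromenos)$) into \eqref{eq:def_lambda_inf} reproduces exactly the first line of \eqref{eq:partialJ_cuerpo}. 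Thus $C=\frac{\partial J^0}{\partial x_i}(x_i,t_i,T)$ says that $x_i$ is a switching state associated with the Hamiltonian level $x_T=x^0(T;x_i,t_i)$, which by the construction behind \eqref{eq:ts3} is precisely $t_i=t_S(x_i)$. I would nonetheless carry out the verification as a direct computation.

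Write $E:=e^{-\sqrt\Delta\,(T-t_i)}\in(0,1]$. From \eqref{eq:partialJ_cuerpo} with $w=0$,
\[
\frac{\partial J^0}{\partial x_i}(x_i,t_i,T)=\frac1A\cdot\frac{1-E}{(x_i-\rzeromenos)-(x_i-\rzeromas)E},
\]
and the denominator is strictly positive for every $x_i\in[0,1]$: if $x_i\le\rzeromas$ it equals $(x_i-\rzeromenos)+(\rzeromas-x_i)E\ge x_i-\rzeromenos>0$, while if $x_i>\rzeromas$ it is $\ge(x_i-\rzeromenos)-(x_i-\rzeromas)=\rzeromas-\rzeromenos>0$ because $E\le1$. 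Clearing this positive denominator, $\frac{\partial J^0}{\partial x_i}(x_i,t_i,T)>C$ is equivalent to
\[
1-AC(x_i-\rzeromenos)\;>\;E\,\big(1-AC(x_i-\rzeromas)\big),
\]
while the cases $=C$ and $<C$ correspond to equality and the reverse inequality.

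I would then split according to the position of $x_i$ relative to $2\xbar-\rzeromas$, using the identity $2\xbar-\rzeromas=\frac1{AC}+\rzeromenos$ already established in the proof of Lemma \ref{lemma:t_s}. If $x_i<2\xbar-\rzeromas$, then $1-AC(x_i-\rzeromenos)>0$ and, since $1-AC(x_i-\rzeromas)=1-AC(x_i-\rzeromenos)+AC(\rzeromas-\rzeromenos)$, also $1-AC(x_i-\rzeromas)>0$; dividing, the last displayed inequality becomes $E<E^{*}(x_i):=\frac{1-AC(x_i-\rzeromenos)}{1-AC(x_i-\rzeromas)}\in(0,1)$, and comparing with \eqref{eq:ts3} one reads off $E^{*}(x_i)=e^{-\sqrt\Delta\,(T-t_S(x_i))}$, so $E\gtrless E^{*}(x_i)\iff t_i\lessgtr t_S(x_i)$. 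Hence on this range $\frac{\partial J^0}{\partial x_i}>C\iff t_i<t_S(x_i)$, $=C\iff t_i=t_S(x_i)$, and $<C\iff t_i>t_S(x_i)$, which by \eqref{eq:Theta} and \eqref{eq:set-last-switch} is exactly membership in $\Theta^\star$, $S^\star$, and $(\Theta^\star)^C\backslash S^\star$. If instead $x_i\ge2\xbar-\rzeromas$, then $1-AC(x_i-\rzeromenos)\le0$; when $x_i<\frac1{AC}+\rzeromas$ the bracket $1-AC(x_i-\rzeromas)$ is still positive, so the right-hand side of the inequality is positive and the left-hand side nonpositive; when $x_i\ge\frac1{AC}+\rzeromas$ that bracket is $\le0$ and, since $E\le1$, $E\big(1-AC(x_i-\rzeromas)\big)\ge1-AC(x_i-\rzeromas)>1-AC(x_i-\rzeromenos)$. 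In both sub-cases the inequality fails, so $\frac{\partial J^0}{\partial x_i}(x_i,t_i,T)<C$, and such points lie outside $\Theta^\star\cup S^\star$, consistently with the stated identities. Collecting the two ranges yields the three equalities.

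The only real difficulty here is bookkeeping: keeping track of the signs of the two affine brackets $1-AC(x_i-\rzeromenos)$ and $1-AC(x_i-\rzeromas)$ as $x_i$ crosses $\rzeromas$, $2\xbar-\rzeromas=\frac1{AC}+\rzeromenos$, and $\frac1{AC}+\rzeromas$, so that the direction of the inequality is flipped or preserved correctly when the denominator is cleared and $E^{*}$ is formed. Everything else is routine algebra on \eqref{eq:partialJ_cuerpo}, \eqref{eq:ts3}, and the identity from Lemma \ref{lemma:t_s}.
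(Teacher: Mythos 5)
Your proof is correct and follows essentially the same route as the paper's: both start from the explicit formula \eqref{eq:partialJ_cuerpo} for $\frac{\partial J^0}{\partial x_i}$, use the positivity of its denominator to clear it, and identify the resulting zero level set with the graph of $t_S(\cdot)$ from \eqref{eq:ts3}. The only difference is that you solve the cleared inequality for $t_i$ at fixed $x_i$ (via your $E^{*}(x_i)$), whereas the paper solves it for $x_i$ at fixed $t_i$ (via $x_C^{\star}(t_i)$ in \eqref{eq:xCstar}); your sign bookkeeping across the thresholds $2\xbar-\rzeromas=\frac{1}{AC}+\rzeromenos$ and $\frac{1}{AC}+\rzeromas$ is accurate.
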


\begin{proof}

Replacing $w=0$ and $t_f=T$ in \eqref{eq:partialJ_cuerpo}, we get 
\begin{equation}\label{eq:dJ0_dx0}
\frac{\partial J^0}{\partial x_i}(x_i,t_i,T) = \frac{1}{A}\frac{1-e^{-\sqrt{\Delta}(T-t_i)}}{(x_i-\rzeromenos)-(x_i-\rzeromas)e^{-\sqrt{\Delta}(T-t_i)}}.
\end{equation}

Since the denominator of the right-hand side expression in \eqref{eq:dJ0_dx0} is always nonnegative (as long as $x_i>\rzeromenos$ and $t_i\leq T$), a point $(x_i,t_i)$ satisfies $C>\frac{\partial J^0}{\partial x_i}(x_i,t_i,T)$ (resp. $=,<$) if and only if 
\begin{equation}\label{eq:xCstar}
x_i \, >\, x_C^{\star}(t_i) \,:=\,   \frac{1}{AC} + \rzeromas - \frac{\rzeromas-\rzeromenos}{1-e^{-\sqrt{\Delta}(T-t_i)}}.
\end{equation}
(resp. $=,<$). Now,
\begin{equation*}
x_i>x_C^{\star}(t_i) \,\Leftrightarrow\,\left\{\quad \left[\,
t_i>t_S(x_i),\mbox{ if } x_i<\frac{1}{AC}+\rzeromenos \, \right] \quad\mbox{ or }\quad 
x_i\geq \frac{1}{AC}+\rzeromenos\quad \right\}.
\end{equation*}


Indeed, the first part of the right-hand side of last equivalence can be obtained using the alternative form 
\begin{equation*}\label{eq:ts_alternativo}
t_S(x_s) 
= T+\frac{1}{\sqrt{\Delta}}\log\left(1- \frac{AC(\rzeromas-\rzeromenos)}{1+AC(\rzeromas-x_s)} \right),
\end{equation*}
provided that $t_S(\cdot)$ is defined for $x_i<\frac{1}{AC}+\rzeromenos = 2\xbar-\rzeromas$. The second part can be obtained because the function $x_C^{\star}(\cdot)$ is decreasing, then for any $x_i\geq 2\xbar-\rzeromas$ we have $x_C^{\star}(t_i)<\lim_{t\searrow-\infty}x_C^{\star}(t)=2\xbar-\rzeromas \leq x_i$. This proves the Lemma.
\end{proof}
\medskip

\begin{lemma}\label{lemma:dWdx0_C}
Consider, for initial conditions $(x_0,t_0)$ such that $\hat t_S(x_0,t_0)<\infty$ and $x_0>0$, the cost $W_S(x_0,t_0)$ defined in \eqref{eq:WS}. If $x_0\neq \rmumas$, then
\begin{equation*}\label{eq:LS}
C- \frac{\partial W_S}{\partial x_0}(x_0,t_0) < 0\, \,(\mbox{resp.}=0) \quad\mbox{if and only if}\quad \frac{x_0 + \hat x_S(x_0,t_0)}{2} < \xbar \,(\mbox{resp.}=\xbar).
\end{equation*}
\end{lemma}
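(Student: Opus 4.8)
The plan is to compute $\partial W_S/\partial x_0$ directly from the definition \eqref{eq:WS}, namely $W_S(x_0,t_0) = J^{\mu_I}(x_0,t_0,\hat t_S(x_0,t_0)) + J^{0}(\hat x_S(x_0,t_0),\hat t_S(x_0,t_0),T)$, using the chain rule together with the sensitivity identities of Lemma \ref{lemma:ders}, part 1, and the Hamilton-Jacobi identities \eqref{eq:HJ_0} and \eqref{eq:HJ_mu} satisfied by $J^0(\cdot)$ and $J^{\tildemuI}(\cdot)$. Writing $\hat t_S,\hat x_S$ for $\hat t_S(x_0,t_0),\hat x_S(x_0,t_0)$, differentiation gives
\begin{equation*}
\frac{\partial W_S}{\partial x_0} = \frac{\partial J^{\tildemuI}}{\partial x_i}(x_0,t_0,\hat t_S) + \frac{\partial J^{\tildemuI}}{\partial t_f}(x_0,t_0,\hat t_S)\frac{\partial \hat t_S}{\partial x_0} + \frac{\partial J^0}{\partial x_i}(\hat x_S,\hat t_S,T)\frac{\partial \hat x_S}{\partial x_0} + \frac{\partial J^0}{\partial t_i}(\hat x_S,\hat t_S,T)\frac{\partial \hat t_S}{\partial x_0}.
\end{equation*}

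Next I would organize the four terms by grouping the two that carry $\partial\hat t_S/\partial x_0$. Using $\partial J^{\tildemuI}/\partial t_f = -\partial J^{\tildemuI}/\partial t_i$ from \eqref{eq:partialJ_cuerpo}, and then the Hamilton-Jacobi identity \eqref{eq:HJ_mu} to rewrite $\partial J^{\tildemuI}/\partial t_i(x_0,t_0,\hat t_S) = -(C\tildemuI + x_0) - f(x_0)\,\partial J^{\tildemuI}/\partial x_i(x_0,t_0,\hat t_S)$, together with the identity \eqref{eq:HJ_0} to rewrite $\partial J^0/\partial t_i(\hat x_S,\hat t_S,T) = -\hat x_S - (\mu_I+f(\hat x_S))\,\partial J^0/\partial x_i(\hat x_S,\hat t_S,T)$, the expression collapses. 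The crucial simplification is that at the switching point $\hat x_S$ lies on the curve $S$, so by \eqref{eq:def_xs_xt} and Lemma \ref{lemma:xs_xt} one has $\lambda_{\inf}(\hat x_S;x_T(\hat x_S)) = C$, i.e. $\partial J^0/\partial x_i(\hat x_S,\hat t_S,T) = C$ by Lemma \ref{lemma:dJdx0_C} (since $(\hat x_S,\hat t_S)\in S^{\star}$). Substituting this value, and using the sensitivity relations $\partial\hat x_S/\partial x_0 - f(\hat x_S)\,\partial\hat t_S/\partial x_0 = f(\hat x_S)/f(x_0)$ from \eqref{eq:dxmu}, all the occurrences of $\partial J^{\tildemuI}/\partial x_i$ and $\partial\hat t_S/\partial x_0$ should cancel, leaving $\partial W_S/\partial x_0$ expressed purely in terms of $x_0$, $\hat x_S$, and the function $f$.

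After the dust settles I expect to obtain something like $C - \partial W_S/\partial x_0 = \big(C f'(\text{something}) + 1\big)\cdot(\text{positive factor})$, or more precisely an expression whose sign is governed by $1 + C f'\!\left(\tfrac{x_0+\hat x_S}{2}\right)$ — recalling that $f$ is quadratic with $f''\equiv -2A$, so $f'$ is affine and $f'(x_0) + f'(\hat x_S) = 2 f'\!\big(\tfrac{x_0+\hat x_S}{2}\big)$. Since $\xbar$ is characterized by $C f'(\xbar) = -1$ (equation \eqref{eq:def_xbar}) and $f'$ is strictly decreasing, the condition $1 + C f'\!\big(\tfrac{x_0+\hat x_S}{2}\big) < 0$ (resp. $=0$) is equivalent to $\tfrac{x_0+\hat x_S}{2} < \xbar$ (resp. $=\xbar$), which is exactly the claimed equivalence. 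The hypothesis $x_0 \neq \rmumas$ (and $x_0>0$) enters to guarantee that the positive factor multiplying $1 + Cf'(\cdot)$ is genuinely nonzero, so that the sign of $C - \partial W_S/\partial x_0$ is determined solely by the sign of $1 + Cf'\!\big(\tfrac{x_0+\hat x_S}{2}\big)$; I would track where $x_0 - \rmumas$ appears as a factor (it should come from the derivative of the $J^{\tildemuI}$ term, since $\rmumas$ is the positive equilibrium under $w=\mu_I$) and verify it is the only obstruction.

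The main obstacle will be the bookkeeping in the cancellation: keeping careful track of the several partial derivatives of $J^{\tildemuI}$ and $J^0$ evaluated at the correct arguments, correctly applying the two Hamilton-Jacobi identities at the matching interior points, and making sure the $\partial\hat t_S/\partial x_0$ terms (which we do not have a clean closed form for) drop out rather than survive. The sensitivity lemma \eqref{eq:dxmu} is designed precisely to eliminate these, so the computation should go through, but it requires care to confirm that every term without a clean expression actually cancels.
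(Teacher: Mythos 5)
Your plan follows essentially the same route as the paper: chain rule on $W_S$, the sensitivity identities of Lemma \ref{lemma:ders}, the observation that $\frac{\partial J^0}{\partial x_i}(\hat x_S,\hat t_S,T)=C$ on the switching curve, and the quadratic structure of $f$ to produce $f(x_0)-f(\hat x_S)=(x_0-\hat x_S)f'\bigl(\tfrac{x_0+\hat x_S}{2}\bigr)$. The paper indeed arrives at
\begin{equation*}
C-\frac{\partial W_S}{\partial x_0}(x_0,t_0)=\frac{x_0-\hat x_S}{f(x_0)}\Bigl(1+Cf'\bigl(\tfrac{x_0+\hat x_S}{2}\bigr)\Bigr)=2AC\,\frac{x_0-\hat x_S}{f(x_0)}\Bigl(\xbar-\tfrac{x_0+\hat x_S}{2}\Bigr),
\end{equation*}
which is exactly the shape you predict. (One practical caveat: pushing $\partial J^{\tildemuI}/\partial t_f=-\partial J^{\tildemuI}/\partial t_i$ through the Hamilton--Jacobi identity reintroduces $f(x_0)\,\partial J^{\tildemuI}/\partial x_i$, so the $\partial\hat t_S/\partial x_0$ terms do not cancel on HJ identities alone; you still need the explicit value $\frac{\partial J^{\tildemuI}}{\partial x_i}(x_0,t_0,\hat t_S)=-\frac{x_0-\hat x_S}{f(x_0)}$, or equivalently $\frac{\partial J^{w}}{\partial t_f}=Cw+x^w(t_f)$, as the paper computes in \eqref{eq:aux_cuerpo_01}.)

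The genuine problem is your final sign analysis, which contains two compensating errors. First, the prefactor $\frac{x_0-\hat x_S}{f(x_0)}$ is \emph{nonpositive}, not positive: under $w=\mu_I$ the trajectory moves monotonically toward the equilibrium $\rmumas$, so $x_0-\hat x_S$ and $f(x_0)$ always have opposite signs; this monotonicity fact is the real content of the sign argument and is absent from your plan (the hypothesis $x_0\neq\rmumas$ serves only to make $f(x_0)\neq0$, not to make the prefactor nonzero). Second, since $f'$ is strictly decreasing and $Cf'(\xbar)=-1$, the inequality $1+Cf'\bigl(\tfrac{x_0+\hat x_S}{2}\bigr)<0$ is equivalent to $\tfrac{x_0+\hat x_S}{2}>\xbar$, not $<\xbar$ as you assert. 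The two mistakes cancel and you land on the correct statement, but each intermediate claim is false as written; the correct chain is: prefactor negative, so $C-\partial W_S/\partial x_0<0$ iff $1+Cf'\bigl(\tfrac{x_0+\hat x_S}{2}\bigr)>0$ iff $\tfrac{x_0+\hat x_S}{2}<\xbar$.
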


\begin{proof}
Since $(\hat x_S,\hat t_S)=(\hat x_S(x_0,t_0),\hat t_S(x_0,t_0))$ corresponds to the intersection point (in the plane $(x,t)$) of the curves $t\mapsto(x^{\tildemuI}(t;x_0,t_0),t)$ with $x_s\mapsto(x_s,t_S(x_s))$ (or, in its alternative form from \eqref{eq:xCstar}, $t_s\mapsto(x_C^{\star}(t_s),t_s)$), $(\hat x_S,\hat t_S)$ satisfies the equation
\begin{equation*}\label{eq:ap_tshat}
\hat x_S = \frac{x_0 B/A}{x_0-(x_0-B/A)e^{-B(\hat t_S-t_0)}} = \frac{1}{AC}+\rzeromas-\frac{\sqrt{\Delta}/A}{1-e^{-\sqrt{\Delta}(T-\hat t_S)}}.
\end{equation*}
If $x_0\neq \rmumas$, (where $\rmumas=B/A$ is a root of $f(\cdot)$), we get 
\begin{equation*}
e^{-B(\hat t_S-t_0)} = x_0 \frac{1-\frac{B}{A\hat x_S}}{x_0-B/A}\quad,\quad e^{-\sqrt{\Delta}(T-\hat t_S)} = \frac{\frac{1}{AC}+\rzeromenos-\hat x_S}{\frac{1}{AC}+\rzeromas-\hat x_S}.
\end{equation*}
In such case,
\begin{equation}\label{eq:aux_cuerpo_01}
\begin{split}
\frac{\partial J^{\tildemuI}}{\partial x_i}(x_0,t_0,\hat t_S) =&\, -\frac{x_0-\hat x_S}{f(x_0)}\\[2mm] 
\frac{\partial J^{\tildemuI}}{\partial t_i}(x_0,t_0,\hat t_S) =&\, -\hat x_S-C\tildemuI,\\[2mm]
\frac{\partial J^{0}}{\partial x_i}(\hat x_S,\hat t_S,T) = &\, \left.\frac{1}{A}\frac{1}{\frac{1}{AC}+x_i-\hat x_S}\right|_{x_i=\hat x_S} = C,\\[2mm]
\frac{\partial J^{0}}{\partial t_i}(\hat x_S,\hat t_S,T) = &\, -x_i-\left.\frac{1}{A}\frac{-Ax_i^2+Bx_i+\tildemuI}{\frac{1}{AC}+x_i-\hat x_S}\right|_{x_i=\hat x_S} = -\hat x_S -C(f(\hat x_S)+\tildemuI),
\end{split}
\end{equation}

Denote $\xi_{0S}:=(x_0,t_0,\hat t_S)$ and $\xi_{ST}=(\hat x_S,\hat t_S,T)$. Using \eqref{eq:dxmu} and \eqref{eq:aux_cuerpo_01}, if $x_0\neq B/A (=\rmumas)$, then  $f(x_0)\neq0$, and we have
\begin{equation*}
\begin{split}
\frac{\partial W_S}{\partial x_0}(x_0,t_0) 
=&\, \frac{\partial J^{\tildemuI}(\xi_{0S})}{\partial x_i} + \frac{\partial J^{0}(\xi_{ST})}{\partial x_i}\left.\frac{\partial x^{\tildemuI}}{\partial x_0}\right|_{t=\hat t_S}  \\
&\, + \frac{\partial \hat t_S}{\partial x_0}\left[ -\frac{\partial J^{\tildemuI}(\xi_{0S})}{\partial t_i}  + f(\hat x_S)\frac{\partial J^{0}(\xi_{ST})}{\partial x_i} + \frac{\partial J^{0}(\xi_{ST})}{\partial t_i} \right],\\
=&\, \frac{1}{f(x_0)} (-x_0+\hat x_S+Cf(\hat x_S)).
\end{split}
\end{equation*}

Then,
\begin{equation*}
C- \frac{\partial W_S}{\partial x_0}(x_0,t_0) = 2AC\frac{x_0-\hat x_S(x_0,t_0)}{f(x_0)}\left(\xbar-\frac{x_0+\hat x_S(x_0,t_0)}{2}\right).
\end{equation*}

If $x_0>B/A$ (resp. $<B/A$), then $f(x_0)<0$ (resp. $>0$). Consequently, the term $\frac{x_0-\hat x_S(x_0,t_0)}{f(x_0)}$ is nonpositive. Thus, the result follows.
\end{proof}
\medskip

We now proceed to prove Propositions \ref{prop:HJB_sintesis_1} and \ref{prop:HJB_sintesis_2}.
\medskip

\begin{proof}[Proof of Proposition \ref{prop:HJB_sintesis_1}]
At any point $(x_0,t_0)$ in the sets $\Theta^C\backslash S$ and $\Theta$, the function $W(\cdot)$ is differentiable, but not necessarily on $S$. Also, Lemma \ref{lemma:dWdx0_C} does not provide information on the set $X_0:= \{(x,t)\,|\,x=\rmumas,\,t<t_S(\rmumas)\}$. Notice that $S$ and $X_0$ are one-dimensional manifolds. Then, we split the proof in two possible situations: $(x_0,t_0)\in\Theta^C\backslash S$, $(x_0,t_0)\in\Theta\backslash X_0$.
\begin{enumerate}
\item For $(x_0,t_0)\in\Theta^C\backslash S$, $W(x_0,t_0)=J^{0}(x_0,t_0,T)$. Since $J^{0}(x_0,t_0,T)$ satisfies \eqref{eq:HJ_0}, \eqref{eq:HJB} takes the form 
\begin{equation}\label{eq:HJB_V}
\inf_{w\in[0,\tildemuI]}\left\{\, w\left[C-\frac{\partial J^0}{\partial x_i}(x_0,t_0,T)\right]\, \right\}=0.
\end{equation} 
Thanks to Lemma \ref{lemma:dJdx0_C}, we know that $C-\frac{\partial J^0}{\partial x_i}(x_0,t_0,T)>0$ for $(x_0,t_0)\in\Theta^C\backslash S$, from where we conclude that \eqref{eq:HJB_V} is true, with $w=0$.

\item For $(x_0,t_0)\in\Theta\backslash X_0$, $W(x_0,t_0) = W_S(x_0,t_0)  = J^{\tildemuI}(x_0,t_0,\hat t_S) + J^{0}(\hat x_S,\hat t_S,T)$ (where, for simplicity, we write $(\hat x_S,\hat t_S)=(\hat x_S(x_0,t_0),\hat t_S(x_0,t_0))$). Then, denoting $\xi_{0S}:=(x_0,t_0,\hat t_S)$ and $\xi_{ST}=(\hat x_S,\hat t_S,T)$, and using \eqref{eq:dxmu} and \eqref{eq:aux_cuerpo_01}, since $f(x_0)\neq0$, we have
\begin{equation}\label{eq:derivadas_V_Theta}
\left\{\quad\begin{split}
\frac{\partial W}{\partial x_0}(x_0,t_0) 
=&\, \frac{\partial J^{\tildemuI}(\xi_{0S})}{\partial x_i} + \left.\frac{\partial J^{0}(\xi_{ST})}{\partial x_i}\frac{\partial x^{\tildemuI}}{\partial x_0}\right|_{t=\hat t_S} \\
&\,+ \frac{\partial \hat t_S}{\partial x_0}\left[ -\frac{\partial J^{\tildemuI}(\xi_{0S})}{\partial t_i}  + f(\hat x_S)\frac{\partial J^{0}(\xi_{ST})}{\partial x_i} + \frac{\partial J^{0}(\xi_{ST})}{\partial t_i} \right],\\[2mm]
=&\, \frac{1}{f(x_0)} (-x_0+\hat x_S+Cf(\hat x_S)),\\[2mm]
\frac{\partial W}{\partial t_0}(x_0,t_0) 
=&\, \frac{\partial J^{\tildemuI}(\xi_{0S})}{\partial t_i} +\left. \frac{\partial J^{0}(\xi_{ST})}{\partial x_i}\frac{\partial x^{\tildemuI}}{\partial t_0}\right|_{t=\hat t_S} \\
& \, + \frac{\partial \hat t_S}{\partial t_0}\left[ -\frac{\partial J^{\tildemuI}(\xi_{0S})}{\partial t_i}  + f(\hat x_S)\frac{\partial J^{0}(\xi_{ST})}{\partial x_i} + \frac{\partial J^{0}(\xi_{ST})}{\partial t_i} \right],\\[2mm]
=&\, -\hat x_S-C\tildemuI-Cf(\hat x_S).
\end{split}\right.
\end{equation}

Then, using \eqref{eq:derivadas_V_Theta}, the HJB equation \eqref{eq:HJB} takes the form
\begin{equation}\label{eq:HJB_procesada_cuerpo}
\begin{split}
0 =&\,-\hat x_S-C(\tildemuI+f(\hat x_S)) + x_0 + \frac{\tildemuI+f(x_0)}{f(x_0)} (-x_0+\hat x_S+Cf(\hat x_S)) \\
&\, + \inf_{w\in[0,\tildemuI]}\left\{ \left[ C- \frac{\partial W}{\partial x_0} \right]w \right\}.
\end{split}
\end{equation}

The term that multiplies $w$ in the infimum in \eqref{eq:HJB_procesada_cuerpo} is negative. Indeed, since we are in the case $\xbar\leq\rzeromas$, the asymptote of $t_S(\cdot)$ that defines $S$ is $\xssup=2\xbar-\rzeromas\leq\xbar$. As we are considering $(x_0,t_0)\in\Theta\backslash X_0$, we have $x_0<\xbar$ and $\hat x_S\leq\max\{x_0,\rmumas\}<\xbar$, which implies that $\xbar-\frac{x_0+\hat x_S}{2}>0$. Lemma \ref{lemma:dWdx0_C} leads to $C-\frac{\partial W}{\partial x_0}(x_0,t_0)<0$.

Then, the infimum in \eqref{eq:HJB_procesada_cuerpo} is attained with $w=\tildemuI$. Thus, \eqref{eq:HJB_procesada_cuerpo} becomes
\begin{equation*}\label{eq:HJB_procesada_cuerpo2}
\begin{split}
-\hat x_S-C\tildemuI-Cf(\hat x_S) + x_0 + & \, \frac{\tildemuI+f(x_0)}{f(x_0)} (-x_0+\hat x_S+Cf(\hat x_S))\\[2mm]
&\quad + \left[ C-\frac{1}{f(x_0)} (-x_0+\hat x_S+Cf(\hat x_S)) \right]\tildemuI =0.
\end{split}
\end{equation*}
which shows that \eqref{eq:HJB_procesada_cuerpo} or, equivalently, \eqref{eq:HJB}, is satisfied, with $w=\tildemuI$.

\end{enumerate}

We conclude the result by Theorem \ref{teo:bresanpiccoli}.
\end{proof}

\begin{proof}[Proof of Proposition \ref{prop:HJB_sintesis_2}]
At any point $(x_0,t_0)$ in the interior of the sets $\mathcal V$, $\mathcal S$, $\mathcal T$, the function $W(\cdot)$ is differentiable, but not necessarily on the boundary of $\mathcal S$. Also, as before, Lemma \ref{lemma:dWdx0_C} does not provide information on the set $X_0:= \{(x,t)\,|\,x=\rmumas,\,t<t_S(\rmumas)\}$. We split the proof in three possible situations: $(x_0,t_0)\in\mathcal V\backslash (S\cup\Gamma)$, $(x_0,t_0)\in\mathcal S\backslash X_0$, $(x_0,t_0)\in\mathcal T\backslash\sigma$.
		
\begin{enumerate}
\item Suppose $(x_0,t_0)\in\mathcal V\backslash (S\cup\Gamma)$. Then, $W(x_0,t_0)=J^0(x_0,t_0,T)$. Notice that $\mathcal V\backslash (S\cup\Gamma)\subseteq (\Theta^{\star})^C\backslash S^{\star}$. Thus, from Lemma \ref{lemma:dJdx0_C}, we have that $C>\frac{\partial W}{\partial x_0}(x_0,t_0)$. Thus, $w=0$ is the minimizer in \eqref{eq:HJB}. This, with \eqref{eq:HJ_0}, implies that \eqref{eq:HJB} is satisfied (similar to the arguments used in point 1. of Proposition \ref{prop:HJB_sintesis_1}).

\item If $(x_0,t_0)\in\mathcal S\backslash X_0$, then $W(x_0,t_0)=W_S(x_0,t_0)$.   

Suppose $x_0<\rmumas$. As in the region $\{(x,t)\,|\,x<\rmumas\}$ we have $f(x)>0$, with $\rmumas$ equilibrium of the dynamics, then $x_0<x_{S}(x_0,t_0)<\rmumas<\xbar$. This implies that $\frac{x_0+\hat x_S(x_0,t_0)}{2}<\xbar$. 

Now, suppose $x_0>\rmumas$. Notice that in the region $\{(x,t)\,|\,x>\rmumas\}$, we have $f(x)<0$, with $\rmumas$ equilibrium of the dynamics. Then, there exists $(x_{\sigma},t_{\sigma})\in\sigma$ that belongs to the same trajectory than $(x_0,t_0)$ (controlled by $w=\mu_I$), with $t_0>t_{\sigma}$ and $x_0<x_{\sigma}$. Thus, $\hat x_{S}(x_0,t_0)=\hat x_S(x_{\sigma},t_{\sigma})<x_0<x_{\sigma}$. Moreover, by construction of the curve $\sigma$, $x_{\sigma}$ and $\hat x_S(x_{\sigma},t_{\sigma})$ are linked by the relation $\frac{x_{\sigma} + \hat x_S(x_{\sigma},t_{\sigma})}{2}=\xbar$. In summary, 
\begin{equation*}
\frac{x_0 + \hat x_S(x_0,t_0)}{2}< \frac{x_{\sigma} + \hat x_S(x_0,t_0)}{2}\leq \frac{x_{\sigma} + \hat x_S(x_{\sigma},t_{\sigma})}{2}=\xbar.
\end{equation*}

In both cases, by Lemma \ref{lemma:dWdx0_C} we have $C-\frac{\partial W}{\partial x_0}(x_0,t_0)<0$. We conclude by the same arguments than in point 2. of Proposition \ref{prop:HJB_sintesis_1}. 

\item Supose $(x_0,t_0)\in\mathcal T\backslash\sigma$. For simplicity, write $(\hat x_{\sigma},\hat t_{\sigma})=(\hat x_{\sigma}(x_0,t_0),\hat t_{\sigma}(x_0,t_0))$, and $(\hat x_{S},\hat t_{S})=(\hat x_{S}(\hat x_{\sigma},\hat t_{\sigma}),\hat t_{S}(\hat x_{\sigma},\hat t_{\sigma}))$. Denoting $\xi_{0\sigma}:=(x_0,t_0,\hat t_{\sigma})$, $\xi_{\sigma S}=(\hat t_{\sigma},\hat x_{\sigma},\hat t_S)$  $\xi_{ST}=(\hat x_S,\hat t_S,T)$, we have for any $(x_0,t_0)\in\mathcal T\backslash\sigma$
\begin{equation}\label{eq:derivadas_V_T}
\left\{\quad\begin{split}
\frac{\partial W}{\partial x_0}(x_0,t_0) \,=&\, \frac{\partial J^0(\xi_{0\sigma})}{\partial x_i} + U_1\frac{\partial \hat t_{\sigma}}{\partial x_0} + U_2\frac{\partial \hat x_{\sigma}}{\partial x_0},\\[2mm]
\frac{\partial W}{\partial t_0}(x_0,t_0) \,=&\, \frac{\partial J^0(\xi_{0\sigma})}{\partial t_i} + U_1\frac{\partial \hat t_{\sigma}}{\partial t_0} + U_2\frac{\partial \hat x_{\sigma}}{\partial t_0}.
\end{split}\right.
\end{equation}
with 
\begin{equation}\label{eq:W1W2}
\left\{\quad\begin{split}
U_1 :=&\, -\frac{\partial J^{0}(\xi_{0\sigma})}{\partial t_i} + \frac{\partial J^{\tildemuI}(\xi_{\sigma S})}{\partial t_i} - \frac{\partial J^{\tildemuI}(\xi_{\sigma S})}{\partial t_i}\frac{\partial \hat t_S}{\partial t}(\hat x_{\sigma},\hat t_{\sigma}) \\[2mm]
&\quad + \frac{\partial J^{0}(\xi_{ST})}{\partial x_i}\frac{\partial \hat x_S}{\partial t}(\hat x_{\sigma},\hat t_{\sigma}) + \frac{\partial J^{0}(\xi_{ST})}{\partial t_i}\frac{\partial \hat t_S}{\partial t}(\hat x_{\sigma},\hat t_{\sigma}),\\[2mm]
U_2 :=&\, \frac{\partial J^{\tildemuI}(\xi_{\sigma S})}{\partial x_i} - \frac{\partial J^{\tildemuI}(\xi_{\sigma S})}{\partial t_i}\frac{\partial \hat t_S}{\partial x}(\hat x_{\sigma},\hat t_{\sigma}) + \frac{\partial J^{0}(\xi_{ST})}{\partial x_i}\frac{\partial \hat x_S}{\partial x}(\hat x_{\sigma},\hat t_{\sigma}) \\[2mm]
&\quad + \frac{\partial J^{0}(\xi_{ST})}{\partial t_i}\frac{\partial \hat t_S}{\partial x}(\hat x_{\sigma},\hat t_{\sigma}).
\end{split}\right.
\end{equation}

From \eqref{eq:aux_cuerpo_01}, \eqref{eq:W1W2} becomes
\begin{equation}\label{eq:W1W2_new}
\left\{\,\begin{split}
U_1 =&\, -\frac{\partial J^{0}(\xi_{0\sigma})}{\partial t_i} -(\hat x_S+C\mu_I) + C\left(\frac{\partial \hat x_S}{\partial t}(\hat x_{\sigma},\hat t_{\sigma}) - f(\hat x_S)\frac{\partial \hat t_S}{\partial t}(\hat x_{\sigma},\hat t_{\sigma})\right),\\[2mm]
U_2 =&\, -\frac{\hat x_{\sigma}-\hat x_S}{f(\hat x_{\sigma})} +C\left(\frac{\partial \hat x_S}{\partial x}(\hat x_{\sigma},\hat t_{\sigma}) -f(\hat x_S)\frac{\partial \hat t_S}{\partial x}(\hat x_{\sigma},\hat t_{\sigma})\right).
\end{split}\right.
\end{equation}

From \eqref{eq:dxmu}, evaluating in $(\hat x_{\sigma},\hat t_{\sigma})$,
\begin{equation*}\label{eq:dxt_dx0}
\begin{split}
\frac{\partial \hat x_S}{\partial x}(\hat x_{\sigma},\hat t_{\sigma}) -f(\hat x_S)\frac{\partial \hat t_S}{\partial x}(\hat x_{\sigma},\hat t_{\sigma}) =&\, \frac{f(\hat x_{S})}{f(\hat x_{\sigma})},\\
\frac{\partial \hat x_S}{\partial t}(\hat x_{\sigma},\hat t_{\sigma}) -f(\hat x_S)\frac{\partial \hat t_S}{\partial t}(\hat x_{\sigma},\hat t_{\sigma}) =&\, -f(\hat x_{S}),
\end{split}
\end{equation*}
which simplifies \eqref{eq:W1W2_new} to
\begin{equation}\label{eq:W1W2_nnew}
U_1 \,=\, -\frac{\partial J^{0}(\xi_{0\sigma})}{\partial t_i} -(\hat x_S + Cf(\hat x_S) + C\mu_I),\quad
U_2 \,=\, \frac{\hat x_S - \hat x_{\sigma} + Cf(\hat x_{S})}{f(\hat x_{\sigma})}.
\end{equation}

Since $\hat x_{\sigma}$ is constructed from the expression $\lambda_{\sup}(\hat x_{\sigma},\hat x_S)=C$ (see Appendix \ref{app:solutionLogistic}), then
\begin{equation*}\label{eq:fxs_fxsigma}
\hat x_S + Cf(\hat x_S) = \hat x_{\sigma} + Cf(\hat x_{\sigma}),
\end{equation*}
which transforms \eqref{eq:W1W2_nnew} to
\begin{equation}\label{eq:W1W2_nnnew}
U_1 \,=\, -\frac{\partial J^{0}(\xi_{0\sigma})}{\partial t_i} -(\hat x_{\sigma} + Cf(\hat x_{\sigma}) + C\mu_I ),\quad
U_2 \,=\, C.
\end{equation}

Replacing \eqref{eq:W1W2_nnnew} in \eqref{eq:derivadas_V_T},
\begin{equation}\label{eq:d_V_x0_2}
\begin{split}
\frac{\partial W}{\partial x_0}(x_0,t_0) 
=&\, \frac{\partial J^0(\xi_{0\sigma})}{\partial x_i} + C\left[ \frac{\partial \hat x_{\sigma}}{\partial x_0} - (\mu_I+ f(\hat x_{\sigma}))\frac{\partial \hat t_{\sigma}}{\partial x_0}  \right] \\
&\,- \frac{\partial \hat t_{\sigma}}{\partial x_0}\left[ \frac{\partial J^{0}(\xi_{0\sigma})}{\partial t_i} + \hat x_{\sigma}   \right] .
\end{split}
\end{equation}

Replacing \eqref{eq:hit_state_dxsigma} (from Lemma \ref{lemma:ders}) in \eqref{eq:d_V_x0_2}, and using \eqref{eq:HJ_0}, we obtain 
\begin{equation}\label{eq:d_V_x0_3}
\begin{split}
\frac{\partial W}{\partial x_0}(x_0,t_0) 
=&\, -\frac{x_0-\hat x_{\sigma}}{\mu_I+f(x_0)}  + C\frac{\mu_I+f(\hat x_{\sigma})}{\mu_I+f(x_0)} \\
&\,- \left[ \frac{1}{\mu_I + f(x_0)} + \frac{\partial \hat t_{\sigma}}{\partial x_0}\right]\left[ \frac{\partial J^{0}(\xi_{0\sigma})}{\partial t_i} + \hat x_{\sigma}  \right].
\end{split}
\end{equation}

Notice that, from \eqref{eq:partialJ_cuerpo} and \eqref{eq:solucion_logistica} (with $w=0$), we have
\begin{equation*}
\frac{\partial J^{0}(\xi_{0\sigma})}{\partial t_i} + \hat x_{\sigma} \,=\, 0,
\end{equation*}
which transforms \eqref{eq:d_V_x0_3} into 
\begin{equation*}\label{eq:d_V_x0_4}
\frac{\partial W}{\partial x_0}(x_0,t_0) 
\,=\, -\frac{x_0-\hat x_{\sigma}-C(\mu_I+f(\hat x_{\sigma}))}{\mu_I+f(x_0)}.
\end{equation*}

We prove that, in $\mathcal T\backslash\sigma$, $C>\frac{\partial W}{\partial x_0}$. Indeed, for $(x_0,t_0)\in\mathcal T\backslash\sigma$ we have $x_0>\rzeromas$, which implies $\mu_I+f(x_0)<0$. Then, $C>\frac{\partial W}{\partial x_0}$ if and only if 
\begin{equation*}
x_0-\hat x_{\sigma} - C(f(\hat x_{\sigma})-f(x_0)) \,=\, 2AC(x_0-\hat x_{\sigma})\left( \xbar - \frac{\hat x_{\sigma}+x_0}{2} \right) < 0,
\end{equation*}
which is true, since for $(x_0,t_0)\in\mathcal T\backslash\sigma$ we have $\xbar<\hat x_{\sigma}<x_0$. Thus, the Hamiltonian is minimized by $w=0$. From \eqref{eq:edp_tS}, \eqref{eq:edp_xS} (in Appendix \ref{app:sensitivity}), and \eqref{eq:HJ_0},
\begin{equation*}
\begin{split}
\frac{\partial W}{\partial t_0} + x_0 + (\tildemuI+f(x_0))\frac{\partial W}{\partial x_0} \,=\,&\, \frac{\partial J^{0}(\xi_{0\sigma})}{\partial t_i} + x_0 + (\tildemuI+f(x_0))\frac{\partial J^{0}(\xi_{0\sigma})}{\partial x_i}\\
&\, + U_1\left[ \frac{\partial \hat t_{\sigma}}{\partial t_0} + (\tildemuI+f(x_0))\frac{\partial \hat t_{\sigma}}{\partial x_0} \right]\\
&\, + U_2\left[ \frac{\partial \hat x_{\sigma}}{\partial t_0} + (\tildemuI+f(x_0))\frac{\partial \hat x_{\sigma}}{\partial x_0} \right],\\
\,=\,&\, 0,
\end{split}
\end{equation*}
which proves the result.

\end{enumerate}

We conclude the result by Theorem \ref{teo:bresanpiccoli}.
\end{proof}
\medskip

\begin{remark}\label{remark:xssup}
Depending on the parameters of the problem, it is possible that the supremum of the states of last switch $\xssup$ does not belong to the interval $[0,1]$. This impacts directly in the qualitative behavior of the optimal controls. Indeed
\begin{enumerate}
\item If $x_{S}^{\sup}\leq0$ (possible if $\xbar\leq \rzeromas/2$; see point 1. of Corollary \ref{lemma:xs_xhash} and point 3. of Lemma \ref{lemma:xs_xt}), the curve $S$ is contained in the zone $\{(x,t)\,|\,x<0\}$. In such case, there are no switches along any optimal trajectories, and the optimal control is always $w=0$ (see Proposition \ref{prop:HJB_sintesis_1}).
\item  If $x_S^{\sup}>1$ (possible if $\xbar\geq1$; see point 3. of Corollary \ref{lemma:xs_xhash} and point 3. of Lemma \ref{lemma:xs_xt}), the curve $\sigma$ exists, and is contained in the zone $\{(x,t)\,|\,x>1\}$. Then, there is at most one switch for every optimal trajectory, from $w=\mu_I$ to $w=0$ (see Proposition \ref{prop:HJB_sintesis_2}).
\end{enumerate}
\end{remark}

Propositions \ref{prop:HJB_sintesis_1} and \ref{prop:HJB_sintesis_2}, provide the necessary information to drive the process optimally. Suppose $(x_0,t_0)\in[0,1]\times[0,T]$. Then, we need to:

\begin{enumerate}
\item Compute the steady states (globally asymptotic stable) $\rmumas$ (see \eqref{eq:rmui}) and $\rzeromas$ (see \eqref{eq:rzero}) associated to constant controls $w(\cdot)\equiv\mu_I$ and $w(\cdot)\equiv 0$ respectively. Compute the point $\xbar$ given by \eqref{eq:def_xbar}. Compute $x_S^{\sup}$, as in Corollary \ref{lemma:xs_xhash}.

\item If $x_{S}^{\sup}\leq0$, there are no switches. Thus, use $w=0$ until time $T$.

\item If $\xbar\leq \rzeromas$, compute the curve of last switch $S$ defined in \eqref{eq:set-last-switch}  and consider the set $\Theta$, given by \eqref{eq:Theta}, that lies below $S$:
\begin{enumerate}
\item If $(x_0,t_0)\in\Theta$, use $w=\mu_I$ until the trajectory intersects $S$;
\item If $(x_0,t_0)\in\Theta^{C}$, use $w=0$ until time $T$.
\end{enumerate}

\item If $\rzeromas<\xbar$, compute  the curve of last switch $S$ defined in \eqref{eq:set-last-switch} and the curve of the first switch $\sigma$ defined in \eqref{eq:Sigma}. Consider the sets  $\mathcal V$, $\mathcal T$, and $ \mathcal S$  introduced in Proposition \ref{prop:HJB_sintesis_2} (see also Figure \ref{fig:systhesis}):
\begin{enumerate}
\item If $(x_0,t_0)\in\mathcal T$, use $w=0$ until the trajectory intersects $\sigma$;
\item If $(x_0,t_0)\in \mathcal S$, use $w=\mu_I$ until the trajectory intersects $S$;
\item If $(x_0,t_0)\in\mathcal V\cup S\cup\Gamma$, use $w=0$ until time $T$. 
\end{enumerate}
\end{enumerate}


\section{Conclusions}\label{sec:conclusions}

In this paper, we have proposed a simple epidemiological model for representing the spread of a disease in a prison, and its control through a screening and treatment policy at the entry point of new inmates. We obtain the complete synthesis of solutions that minimize the total cost (screening/treatment at the entry point plus the cost associated to have infected individuals inside the prison). Our analysis allows to conclude that optimal solutions are of bang-bang type, that is, it will be optimal to apply a full screening strategy (i.e., to all new inmates) or to suppress this strategy in some periods of time. Also, we determine that there are at most two switching instants, finishing always with a null-screening policy. Therefore, the optimal policies can be only: (i) Null-screening strategy; (ii) Full-screening strategy and then the null-screening strategy; (iii) Null-screening strategy, full-screening strategy and then the null-screening strategy. The different cases are determined in terms of the problem data: (a) Disease: contagious and recovery rates; (b) Prison: inmate entry and exit rates, considered equals; (c) Costs: unitary costs of screening/treatment at the entry point and the unitary cost associated to have infected individuals inside the prison. The switching curves in the state-time space are computed explicitly, allowing to obtain feedback rules for all cases. In addition, explicit expressions of the value function are obtained, making possible to estimate optimal costs. 


\section{Acknowledgments}
This work was funded by FONDECYT grants N 1200355 (first author) and N 3180367 (second author), and PFCHA/Doctorado Becas Chile grant N 2017-21171813 (third author), all programs from ANID-Chile. The authors are very grateful to Professors Térence Bayen (Université d'Avignon, France) and Cristopher Hermosilla (Universidad Técnica Federico Santa María, Chile)   for fruitful discussions and insightful ideas.


\bibliographystyle{siamplain}
\bibliography{preprint_oc_sis_prison_2020-10-31}


\appendix

\section{Appendix}

\if{
\subsection{Existence of optimal solutions}\label{ap:existenceProof}

First let us write the result as exposed in \cite{clarke}. We have the following optimal control problem: Minimize the following functional:

\begin{equation}\label{eq:funcionalGeneral}
J(x,u) = l(x(a),x(b)) + \int_{a}^{b} \Lambda(t,x(t),u(t)) dt,
\end{equation}
subject to:
\begin{equation}\label{eq:dinamicaGeneral}
\left\{\quad\begin{split}
&\dot{x}(t) = g_0(t,x(t)) + \sum_{j=1}^m g_j(t,x(t)) u^j(t) \quad a.e.\\
&u(t) \in U(t) \quad a.e.\\
&(t,x(t)) \in Q \quad \forall t\in[a,b]\\
&(x(a),x(b)) \in E.
\end{split}\right.
\end{equation}
The existence theorem states:

\begin{theorem}\label{ap:TeoremaExistencia}
Let the data of \eqref{eq:funcionalGeneral},\eqref{eq:dinamicaGeneral} satisfy the following hypotheses:

\begin{enumerate}\itemsep0em 
\item Each $g_j$, $(j=0,1,\cdots,m)$ is measurable in $t$, continuous in $x$ and has linear growth: there exists a constant $M$ such that $(t,x)\in Q \Rightarrow |g_j(t,x)|\leq M(1+|x|)$;
\item For almost every $t$, the set $U(t)$ is closed and convex;
\item The sets $E$ and $Q$ are closed and $l:\mathbb{R}^n\times \mathbb{R}^n \rightarrow \mathbb{R}$ is lower semicontinuous. 
\item The running cost $\Lambda(t,x,u)$ is LB-measurable in $t$ and $(x,u)$, and lower semicontinuous in $(x,u):\Lambda(t,x,\cdot)$ is convex for each $(t,x) \in Q$; there is a constant $\lambda_0$ such that $(t,x) \in Q, u\in U(t) \Rightarrow \Lambda(t,x,u)\geq \lambda_0$.
\item The projection $\{ \alpha \in \mathbb{R}^n:(\alpha,\beta)\in E$, for some $ \beta \in \mathbb{R}^n \}$ of $E$ is bounded.
\item One of the following holds for some $r>1$:
\begin{enumerate}
\item There exists $k \in L^r(a,b)$ such that, for almost every $t$, $ u\in U(t) \Rightarrow |u|\leq k(t)$,
\item or there exists $\alpha >0 $ and $\beta$ such that $(t,x)\in Q, u\in U(t) \Rightarrow \Lambda(t,x,u) \geq \alpha|u|^r + \beta$.
\end{enumerate}
\end{enumerate}
Then, if there is at least one admissible process $(x,u)$ for which $J(x,u)$ is finite, the problem admits a solution.
\end{theorem}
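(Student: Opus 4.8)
The plan is to run the direct method of the calculus of variations, in the standard Filippov--Cesari form: take a minimizing sequence of admissible processes, extract a uniform limit $x^{\star}$ of the trajectories together with a weak-$L^1$ limit of their velocities, recover an admissible control for $x^{\star}$, and verify lower semicontinuity of the cost along the sequence. The convexity hypotheses (2) on $U(t)$ and (4) on $\Lambda(t,x,\cdot)$ are precisely what make the last two steps work. Concretely, I would first fix a minimizing sequence $(x_k,u_k)$ with $J(x_k,u_k)\to m^{\star}:=\inf J$, finite by assumption. Hypothesis (5) bounds $|x_k(a)|$ uniformly; writing $\phi_k(t)=1+|x_k(t)|$, the linear growth bound in (1) gives $\phi_k'(t)\le|\dot x_k(t)|\le M\phi_k(t)(1+|u_k(t)|)$, so by Gronwall $\|\phi_k\|_\infty$ is controlled by $\|u_k\|_{L^1}$, which is bounded either directly through (6a), or through (6b) together with $\Lambda\ge\lambda_0$ and $J(x_k,u_k)\le m^{\star}+1$ for large $k$. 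Hence $\{x_k\}$ is uniformly bounded, $\dot x_k=g_0(\cdot,x_k)+\sum_j g_j(\cdot,x_k)u_k^j$ is bounded in $L^r$ for some $r>1$, so $\{\dot x_k\}$ is equi-integrable and $\{x_k\}$ equi-absolutely continuous. By Arzel\`a--Ascoli and Dunford--Pettis, along a subsequence $x_k\to x^{\star}$ uniformly and $\dot x_k\rightharpoonup v$ weakly in $L^1$; passing to the limit in $x_k(t)=x_k(a)+\int_a^t\dot x_k$ shows $x^{\star}$ is absolutely continuous with $\dot x^{\star}=v$, and closedness of $E$ and $Q$ gives $(x^{\star}(a),x^{\star}(b))\in E$ and $(t,x^{\star}(t))\in Q$ for all $t$.

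Next I would show $x^{\star}$ is a trajectory of the control system. Consider the velocity sets $F(t,x):=\{g_0(t,x)+\sum_j g_j(t,x)\zeta^j:\zeta\in U(t)\}$, which are convex, being affine images of the convex $U(t)$, and, by continuity of the $g_j$ in $x$ (with the growth controlled as above), closed-valued with closed graph in $x$. Since $\dot x_k(t)\in F(t,x_k(t))$ a.e., the weak-$L^1$ convergence of $\dot x_k$ together with the uniform convergence of $x_k$ and a convexity/closure argument (Mazur's lemma to pass to a.e.\ convergence of convex combinations, then closedness of the graph of $F(t,\cdot)$) yields $\dot x^{\star}(t)\in F(t,x^{\star}(t))$ a.e. Filippov's measurable selection lemma, applied to the LB-measurable map $\zeta\mapsto g_0(t,x^{\star}(t))+\sum_j g_j(t,x^{\star}(t))\zeta^j$ and the measurable multifunction $t\mapsto U(t)$, then produces a measurable $u^{\star}$ with $u^{\star}(t)\in U(t)$ a.e.\ and $\dot x^{\star}=g_0(\cdot,x^{\star})+\sum_j g_j(\cdot,x^{\star})(u^{\star})^j$, so $(x^{\star},u^{\star})$ is an admissible process.

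For optimality I would establish $\liminf_k J(x_k,u_k)\ge J(x^{\star},u^{\star})$; the endpoint term is handled by lower semicontinuity of $l$. The running cost is the delicate point, since the $u_k$ need not converge; the remedy is to pass to the \emph{reduced Lagrangian}
\[
\widehat\Lambda(t,x,\xi):=\inf\bigl\{\Lambda(t,x,\zeta)\ :\ \zeta\in U(t),\ g_0(t,x)+\textstyle\sum_j g_j(t,x)\zeta^j=\xi\bigr\},
\]
set to $+\infty$ when the constraint is infeasible. By convexity of $\Lambda(t,x,\cdot)$, convexity of $U(t)$ and affineness in $\zeta$, the map $\xi\mapsto\widehat\Lambda(t,x,\xi)$ is convex; $\widehat\Lambda$ is a normal integrand, lower semicontinuous in $(x,\xi)$, bounded below by $\lambda_0$, and $\int\Lambda(t,x_k,u_k)\,dt\ge\int\widehat\Lambda(t,x_k,\dot x_k)\,dt$. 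The classical lower semicontinuity theorem for integral functionals convex in the velocity (Tonelli--Serrin--Ioffe), applied with $x_k\to x^{\star}$ uniformly and $\dot x_k\rightharpoonup\dot x^{\star}$ weakly in $L^1$, then gives $\liminf_k\int\Lambda(t,x_k,u_k)\,dt\ge\int\widehat\Lambda(t,x^{\star},\dot x^{\star})\,dt$. Choosing, by a further measurable selection, the control $u^{\star}$ of the previous paragraph so that $\Lambda(t,x^{\star},u^{\star})=\widehat\Lambda(t,x^{\star},\dot x^{\star})$ a.e., we get $J(x^{\star},u^{\star})\le m^{\star}$, and since $(x^{\star},u^{\star})$ is admissible, $J(x^{\star},u^{\star})=m^{\star}$ (which also shows $m^{\star}>-\infty$), so $(x^{\star},u^{\star})$ is the desired minimizer. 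The hard part will be exactly this last step: constructing $\widehat\Lambda$, checking it is a normal integrand that is convex in the velocity, invoking the weak-$L^1$ lower semicontinuity theorem, and splicing in the measurable selection so the limiting velocity is realized by an admissible control attaining $\widehat\Lambda$; the recovery of a measurable control from the limit trajectory in the previous paragraph (Filippov's lemma, where the convexity of $U(t)$ is indispensable) is the second most technical point.
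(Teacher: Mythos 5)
The paper does not actually prove this theorem: it is transcribed verbatim from Clarke's monograph \cite{clarke} (the same reference invoked in Section \ref{sec:pontryagin} to get existence ``via Filippov's theorem''), and the surrounding appendix text --- which is in fact commented out of the compiled document --- only states it and then checks its six hypotheses for the specific SIS problem. Your proposal therefore does something genuinely different: it reconstructs a proof of the black-box result itself, along the standard Filippov--Cesari direct-method lines, which is the right route. The outline is sound and each hypothesis is used where it must be: (5) and the linear growth in (1) with Gronwall for the a priori bound on trajectories, (6) for equi-integrability of the velocities, closedness of $E$ and $Q$ for the limiting constraints, convexity of $U(t)$ for the convexity of the velocity sets $F(t,x)$ (hence for the Mazur/closure argument and for Filippov's selection lemma), and convexity of $\Lambda(t,x,\cdot)$ for the convexity in $\xi$ of the reduced Lagrangian $\widehat\Lambda$ and hence for weak-$L^1$ lower semicontinuity of the running cost. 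What the paper's approach buys is brevity --- the theorem is classical and citing it is entirely legitimate; what yours buys is a self-contained argument that makes transparent why convexity of $U(t)$ and of $\Lambda$ in $u$ are indispensable.

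Two points would need tightening in a full write-up. First, under alternative (6b) the bound on $\|u_k\|_{L^r}$ comes from the coercivity $\Lambda\ge\alpha|u|^r+\beta$ together with an \emph{upper} bound on $\int\Lambda$ along the minimizing sequence; your parenthetical appeal to $\Lambda\ge\lambda_0$ is not the ingredient that delivers this, and the upper bound on $\int\Lambda$ requires $l$ to be bounded below at the endpoints of the sequence, which does not follow immediately from (3) and (5) alone since only the first projection of $E$ is assumed bounded --- the apparent circularity between bounding the states and bounding the controls has to be broken explicitly. Second, the joint lower semicontinuity of $\widehat\Lambda$ in $(x,\xi)$ and the measurable attainment of the infimum defining it are genuinely delicate when $U(t)$ is unbounded (case (6b)); you correctly flag this as the hard part, but it is where most of the technical work of a complete proof lives.
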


Given the previous theorem, we now prove the result for Problem \ref{eq:problem}. Let us recall that given a function $x:\mathbb{R}\rightarrow \mathbb{R}$, and a measurable control function $w:[0,+\infty)\rightarrow [0,\mu_I]$ and $c>0$, the problem is to minimize the cost functional
\begin{equation}\label{eq:funcionalParticular}
J(x,w) = \int_{0}^{T} \left(\, C w(t) + x(t) \,\right) dt,
\end{equation}
subject to
\begin{equation}\label{eq:dinamicaParticular}
\begin{split}
&\dot{x}(t) = \mu_I - w(t) + f(x)
\end{split}
\end{equation}
where $f:\mathbb{R}\rightarrow \mathbb{R}$ is a continuously differentiable function, strictly concave. Also, $f(0) = 0$ and $f(1)<u(t)-\mu_I$, then, if $x(0) \in [0,1]$, then $x(T) \in [0,1]$. Using the same notation as indicated in \cite{clarke}, we have $l(x(a),x(b))=0$, $\Lambda(t,x(t),w(t)) = C w(t) + x(t)$, $g_0(t,x(t)) = \mu_I+f(x(t))$, $g_1(t,x(t)) = -1$, $U(t) = [0,\tildemuI]$, $Q = [0,T]\times[0,1]$, $E = [0,1]\times[0,1]$.
%

Now, let us check each of the conditions in Theorem \ref{ap:TeoremaExistencia}.

\begin{enumerate}\itemsep0em
\item $g_0$ and $g_1$ are continuous in $t$ and $x$. Given that $f$ is continuously differentiable and well defined in all its domain, then in $[0,1]$ there exists $\bar{x} \in [0,1]$ such that $|f(\bar{x})| \geq |f(x)|$, $\forall x \in [0,1]$. Then clearly:
\begin{equation*}
|\mu_I+f(x)| \leq |\mu_I+f(\bar{x})| \leq |\mu_I+f(\bar{x})|(1+|x|),
\end{equation*}
thus, the condition is met for $g_0$. For $g_1= -1$ the conditions are met trivially.

\item Indeed $U(t)$ is closed and convex.

\item Indeed, $E$ and $Q$ are closed and $l=0$ is lower semicontinuous.

\item $\Lambda(t,x,w) = Cw+x$ is continuous in each variable, thus lower semicontinuous. Also $\Lambda(t,x,\cdot)$ is convex for each $(t,x)$, and there is a constant, namely $0$ such that for each $(t,x) \in Q$ and $w\in U(t)$ then:
\begin{equation*}
\Lambda(t,x,w) \geq 0.
\end{equation*}

\item Indeed, given that $E$ is bounded, then the projections of $E$ must be bounded also.

\item Let us consider $r=1$, $\alpha =c$, $\beta=0$. Then,
\begin{equation*}
\Lambda(t,x,w) \geq \alpha|w|^r + \beta.
\end{equation*}
\end{enumerate}

Finally, given that $f$ is locally Lipschitz-continuous in each initial condition considered, then there exists an admissible solution for \eqref{eq:dinamicaParticular}, and we conclude that there is solution for the optimal control problem. 

}\fi

\subsection{Solution of logistic equation and switching times}\label{app:solutionLogistic}

Our purpose is to obtain the solution of \eqref{eq:SIS-logistic-2}, for $w\in[0,\tildemuI]$ fixed. Let us consider the equation 
\begin{equation}\label{eq:log_gen}
\dot x = -Ax^2 + Bx + \mu_I-w,\qquad x(t_0)=x_0.
\end{equation}
This equation has the form  
\begin{equation}\label{eq:log_gen_fact}
\dot x = -A(x-r_w^{+})(x-r_w^{-}),
\end{equation}
with $r_w^{+}>r_w^{-}$ the roots of the equation $-Ax^2 + Bx + (\mu_I-w) =0$, namely, 
\begin{equation*}
r_w^{+} = \frac{B+\sqrt{ B^2 + 4A(\mu_I-w)}}{2A}\,,\quad r_w^{-} = \frac{B-\sqrt{ B^2 + 4A(\mu_I-w)}}{2A}.
\end{equation*}

The solution of \eqref{eq:log_gen_fact} is given by
\begin{equation}\label{eq:solucion_logistica2}
x^w(t;x_0,t_0) = \frac{r_w^{+}(x_0-r_w^{-}) - e^{-A(r_w^{+}-r_w^{-})(t-t_0)}r_w^{-}(x_0-r_w^{+}) }{ (x_0-r_w^{-}) - e^{-A(r_w^{+}-r_w^{-})(t-t_0)}(x_0-r_w^{+})  }, 
\end{equation}
or, in terms of the time, 
\begin{equation}\label{eq:tiempo_logistica2}
t-t_0 = \frac{1}{A(r_w^{+}-r_w^{-})}\log\left| \frac{(x^w(t;x_0,t_0)-r_w^{-})(x_0-r_w^{+}) }{(x^w(t;x_0,t_0)-r_w^{+})(x_0-r_w^{-})} \right|.
\end{equation}

Now, suppose that the final condition of an optimal trajectory $x(\cdot)$ is $x(T)=x_T$. Then, there exists a maximal interval $[\tau_S(x_T),T]$ such that $w(t)=0$ $a.e.\,t\in[\tau_S(x_T),T]$. This means that, during this part of the trajectory, the equilibria of \eqref{eq:log_gen_fact} are $r_{0}^{+}$ and $r_{0}^{-}$. $\tau_S(x_T)$ is the time of last switch, and it satisfies $x(\tau_S(x_T))=x_S(x_T)$. If we replace $(x_0,t_0)=(x_S(x_T),\tau_S(x_T))$, $(x,t)=(x_T,T)$ in \eqref{eq:tiempo_logistica2}, we get
\begin{equation}\label{eq:switch_time}
\tau_S(x_T)= T-\frac{1}{\sqrt{\Delta}}\log\left| \frac{(x_T-r_0^{-})(x_S(x_T)-r_0^{+}) }{(x_T-r_0^{+})(x_S(x_T)-r_0^{-})} \right|,
\end{equation}
with $x_S(x_T)$ given by \eqref{eq:xsxtxchico} or \eqref{eq:xsxtgrande}, according to the corresponding case. Notice that, thanks to Lemma \ref{lemma:xs_xt}, the argument of $\log|\cdot|$ in \eqref{eq:switch_time} is always nonnegative. This function allows to define the curve of the state and time of last switch in the $(x,t)-$state space, parametrized by the final condition $x_T$ of the respective optimal trajectory, as $\varphi(x_T)=(x_S(x_T),\tau_S(x_T))$,  for $x_T$ such that the curve is well defined. Since the only possible states for which there may be a last switch from $w=\tildemuI$ to $w=0$ belong to the set $(\rzeromenos,\xssup)$, we can reparametrize this switching curve in terms of the switching state $x_s$ as follows:

\begin{itemize}
\item Consider as initial condition $x_{s}=x_S(x_T)\in(\rzeromas,\xssup)$, and find $x_T(x_{s})$ the inverse function of $x_S(x_T)$, that solves \eqref{eq:def_xs_xt}, that is
\begin{equation*}
\lambda_{\inf}(x_{s},x_T(x_{s}))=C.
\end{equation*}
From the definition of $\lambda_{\inf}(\cdot)$ in \eqref{eq:def_lambda_inf}, we see that 
\begin{equation}\label{eq:xt_xs}
x_T(x_{s})=C(\mu_I+f(x_{s}))+x_{s} = -AC(x_{s}-\rzeromenos)(x_{s}-\rzeromas)+x_{s}.
\end{equation}
\item Replace $x_T=x_T(x_s)$ in \eqref{eq:switch_time}, and define
\begin{equation*}\label{eq:ts}
t_S(x_s): = \tau_S(x_T(x_s)) =  T-\frac{1}{\sqrt{\Delta}}\log\left| \frac{(x_T(x_s)-\rzeromenos)(x_s-\rzeromas)}{(x_T(x_s)-\rzeromas)(x_s-\rzeromenos)} \right|
\end{equation*}
which, using \eqref{eq:xt_xs}, can be written as
\begin{equation*}\label{eq:ts2}
t_S(x_s) = T-\frac{1}{\sqrt{\Delta}}\log\left| \frac{1-AC(x_s-\rzeromas)}{1-AC(x_s-\rzeromenos)} \right|.
\end{equation*}
\end{itemize}

For the case $\rbar<\xbar$, another switch can be identified (see Figure \ref{fig:diag_fase}). 
Remark \ref{rem:switches} shows that this switch is symmetrical to the point of last switch $x_s$ with respect to $\xbar$, satisfying
\begin{equation}\label{eq:xsigma_xs}
x_{\sigma}(x_s) = 2\xbar - x_s.
\end{equation}

Notice that $x_{\sigma}(x_s)$, as a first switching point related to the last switching point $x_s$, makes sense only if $x_s\in(\rmumas,\xssup)$. Indeed, the set $(\rmumas,\infty)$ is positively invariant under the dynamics \eqref{eq:log_gen} with control $w=\mu_I$. Thus, no point $(x_s,t_s)$ with $x_s<\rmumas$ can belong to the same trajectory of a point $(x_{\sigma}(x_s),t_{\sigma})$, with $x_{\sigma}(x_s)>\rmumas$. Then, the domain of $x_{\sigma}(x_s)$ is the interval $(\rmumas,\xssup)$.

Let us define, along with $x_{\sigma}(x_s)$, the corresponding time of switch $\tau_{\sigma}(x_s)$, obtained as function of $x_s$, replacing $(x_0,t_0)=(x_{\sigma}(x_s),\tau_{\sigma}(x_s))$, $(x,t)=(x_s,t_S(x_s))$, and $w=\tildemuI$ in \eqref{eq:tiempo_logistica2}:
\begin{equation}\label{eq:tau_sigma}
\tau_{\sigma}(x_s) = t_S(x_s)-\frac{1}{B}\log\left| \frac{x_s\left(x_{\sigma}(x_s)-\rmumas\right)}{\left(x_s-\rmumas\right)x_{\sigma}(x_s)} \right|.
\end{equation}

As previously done, we can reparametrize these functions in terms of the first switching point $x_{\sigma}$ as follows:
\begin{itemize}
\item From \eqref{eq:xsigma_xs}, the last switch $x_s$ written as function of the first switch $x_{\sigma}$ as
\begin{equation}\label{eq:xs_xsigma}
x_s(x_{\sigma}) = 2\xbar - x_{\sigma},\qquad x_{\sigma}\in(2\xbar-\xssup,2\xbar-\rmumas).
\end{equation}
\item Replacing \eqref{eq:xs_xsigma} in \eqref{eq:tau_sigma}, define
\begin{equation*}\label{eq:t_sigma_app}
t_{\sigma}(x_{\sigma}) := \tau_{\sigma}(x_s(x_{\sigma})) =  t_S(2\xbar-x_{\sigma})-\frac{1}{B}\log\left| \frac{(2\xbar-x_{\sigma})\left(x_{\sigma}-\rmumas\right)}{\left(2\xbar-x_{\sigma}-\rmumas\right)x_{\sigma}} \right|.
\end{equation*}
\end{itemize}

\subsection{Sensitivity of an ODE flow and hitting times}\label{app:sensitivity}

Consider $x(t;x_0,t_0)$ the solution of an autonomous ordinary differential equation, of the form
\begin{equation}\label{eq:ODE}
\dot x(t) = F(x(t)), \quad t>t_0; \quad x(t_0)=x_0,
\end{equation} 
with $F(\cdot)\in \mathcal C^1(\R)$. Following \cite{victor:ode}, if we suppose that \eqref{eq:ODE} defines a global flow $x(t;x_0,t_0)$, then for each $t\in\R$, the solution satisfies the partial differential equation
\begin{equation}\label{eq:PDE_ODE_F}
\left\{\quad\begin{split}
\frac{\partial x}{\partial t_0}(t;x_0,t_0) +  F(x_0)\frac{\partial x}{\partial x_0}(t;x_0,t_0) &= 0,\quad t_0<t;\\
x(t;x_0,t) &= x_0.
\end{split}\right.
\end{equation}

Since $x(\cdot)$ is a flow, generated by an autonomous system, it holds
\begin{equation}\label{eq:dflujo_dt0}
\frac{\partial x}{\partial t_0}(t;x_0,t_0) = -\frac{\partial x}{\partial t}(t;x_0,t_0) = -F(x(t;x_0,t_0)).
\end{equation}

Combining \eqref{eq:PDE_ODE_F} and \eqref{eq:dflujo_dt0}, if $F(x_0)\neq0$,
\begin{equation}\label{eq:dflujo_dx0}
\frac{\partial x}{\partial x_0}(t;x_0,t_0) = \frac{F(x(t;x_0,t_0))}{F(x_0)}.
\end{equation}

Now, suppose that we have a set $S$, defined as the zero level set of a $\mathcal C^1$ function $G:\R^2\rightarrow\R$, that is, $S=\{(x,t)\,|\, G(x,t)=0\}$, and define the first hitting time of $S$ as
\begin{equation*}\label{eq:hitting_time}
\hat t_S(x_0,t_0) := \inf\{t> t_0\,|\, (x(t;x_0,t_0),t)\in S \}. 
\end{equation*}
and the hitting state of $S$ as
\begin{equation}\label{eq:hitting_state}
\hat x_S(x_0,t_0) := x(\hat t_S(x_0,t_0);x_0,t_0).
\end{equation}

Then, from \cite{victor:ode}, $\hat t_S(\cdot)$ and $\hat x_S(\cdot)$ are differentiable at every $(x_0,t_0)\notin S$ such that 
\begin{equation*}
\frac{\partial G}{\partial t}(\hat x_S(x_0,t_0),\hat t_S(x_0,t_0)) + F(\hat x_S(x_0,t_0))\frac{\partial G}{\partial x}(\hat x_S(x_0,t_0),\hat t_S(x_0,t_0)) \neq0,
\end{equation*}
and, at such $(x_0,t_0)$, we have
\begin{equation}\label{eq:edp_tS}
\frac{\partial \hat t_S}{\partial t_0}(x_0,t_0)+F(x_0)\frac{\partial \hat t_S}{\partial x_0}(x_0,t_0) \,=\, 0,
\end{equation}
and 
\begin{equation}\label{eq:edp_xS}
\frac{\partial \hat x_S}{\partial t_0}(x_0,t_0)+F(x_0)\frac{\partial \hat x_S}{\partial x_0}(x_0,t_0) = 0.
\end{equation}

Notice that, differentiating \eqref{eq:hitting_state} with respect to $t_0$, and using \eqref{eq:dflujo_dt0}, we obtain
\begin{equation*}\label{eq:hitting_state_derivatives_t0}
\frac{\partial \hat x_S}{\partial t_0}(x_0,t_0) - F(\hat x_S(x_0,t_0))\frac{\partial \hat t_S}{\partial t_0}(x_0,t_0) 
\,=\, \left.\frac{\partial x}{\partial t_0}(t;x_0,t_0)\right|_{t=\hat t_S(x_0,t_0)} = -F(\hat x_S(x_0,t_0)).
\end{equation*}
Differentiating \eqref{eq:hitting_state} with respect to $x_0$, and using \eqref{eq:dflujo_dx0}, if $F(x_0)\neq0$, we obtain
\begin{equation*}\label{eq:hitting_state_derivatives_x0}
\frac{\partial \hat x_S}{\partial x_0}(x_0,t_0) - F(\hat x_S(x_0,t_0))\frac{\partial \hat t_S}{\partial x_0}(x_0,t_0) 
\,=\, \left.\frac{\partial x}{\partial x_0}(t;x_0,t_0)\right|_{t=\hat t_S(x_0,t_0)} = \frac{F(\hat x_S(x_0,t_0))}{F(x_0)}.
\end{equation*}

\subsection{Cost functionals}\label{app:valueFunction}

Consider a fixed control value $w\in[0,\tildemuI]$ for the dynamic \eqref{eq:log_gen}. The associated cost function in the interval $[t_i,t_f]$ for a trajectory starting from the initial condition $x_i$ is given by the cost functional
\begin{equation*}\label{eq:Jw_a}
J^{w}(x_i,t_i,t_f) \,:=\, \int_{t_i}^{t_f}x^{w}(t;x_i,t_i)dt + Cw(t_f-t_i),
\end{equation*}
where $x^w(\cdot;x_i,t_i)$ is the solution of \eqref{eq:log_gen}, with constant control $w(t)\equiv w$, such that $x^w(t_i;x_i,t_i)=x_i$, given explicitly in \eqref{eq:solucion_logistica}. Simple integration leads to
\begin{equation*}\label{int_x}
\int_{t_1}^{t_2}x^w(t;x_i,t_i)dt = \rwmas(t_2-t_1)+\frac{1}{A}\log\left| \frac{(x_i-r_w^{-})-(x_i-r_w^{+})e^{-A(r_w^{+}-r_w^{-})(t_2-t_i)}}{(x_i-r_w^{-})-(x_i-r_w^{+})e^{-A(r_w^{+}-r_w^{-})(t_1-t_i)}} \right|.
\end{equation*}
From this expression, we obtain \eqref{eq:Jw_expl}, and the derivatives in \eqref{eq:partialJ_cuerpo}.

\end{document}